\theoremstyle{plain}
\newtheorem{theorem}{\bf Theorem}
\newtheorem{claim}[theorem]{\bf Claim}
\newtheorem{conjecture}[theorem]{\bf Conjecture}
\newtheorem{proposition}[theorem]{\bf Proposition}
\newtheorem{corollary}[theorem]{\bf Corollary}
\newtheorem{lemma}[theorem]{\bf Lemma}
\theoremstyle{definition}
\numberwithin{theorem}{section}
\numberwithin{equation}{section}
\newcommand{\Rea}{{\mathbb R}}
\newcommand{\ceilfrac}[2]{\left\lceil \frac{#1}{#2}\right\rceil}
\DeclareMathOperator{\eps}{\varepsilon}
\renewcommand{\eprint}[1]{\href{https://arxiv.org/abs/#1}{arXiv:#1}}
\renewcommand{\PrintNames@a}[4]{%
	\PrintSeries{\name}
	{#1}
	{}{ and \set@othername}
	{,}{ \set@othername}
	{}{ and \set@othername}
	{#2}{#4}{#3}%
}
\begin{document}

\title{Sums of Laplacian eigenvalues and sums of degrees}

    \author{Alan Lew\thanks{Dept. Math. Sciences, Carnegie Mellon University, Pittsburgh, PA 15213, USA; 
    Department of Mathematics, Technion, Haifa 32000, Israel. \quad e-mail: \href{mailto:alanlew@technion.ac.il}{alanlew@technion.ac.il}.}}
  
	\date{}
	\maketitle

\begin{abstract}
Let $X$ be a simplicial complex. For $1\le i\le\dim(X)$, let $X(i)$ be the set of $i$-dimensional faces of $X$, and let $f_i(X)=|X(i)|$. For $0\le i\le \dim(X)-1$, let $L_i^+(X)$ be the $i$-th upper Laplacian operator of $X$. For $\sigma\in X$ and $1\le r\le \dim(X)$, we denote by $\text{deg}_X^{(r)}(\sigma)$ the number of $r$-dimensional faces of $X$ containing $\sigma$. For a symmetric matrix $M\in \mathbb{R}^{n\times n}$ and $1\le i\le n$, let $\lambda_i(M)$ be the $i$-th largest eigenvalue of $M$. We prove that for every complex $X$, $1\le r\le\dim(X)$, and $1\le k\le f_{r-1}(X)/(r+1)$,
\[ \sum_{i=1}^k \lambda_i(L_{r-1}^+(X)) \le \max \left\{ \sum_{\sigma\in A} \text{deg}_X^{(r)}(\sigma) :\, A\subset X(r-1),\, |A|=(r+1)k \right\}.
\]
This bound is sharp, and it extends a classical result of Anderson and Morley, corresponding to the special case $k=1,\, r=1$. As a consequence, we show that for all $1\le r\le \dim(X)$ and $1\le k\le f_{r-1}(X)$, \[ \sum_{i=1}^{k} \lambda_i(L_{r-1}^+(X)) \le f_r(X) + \binom{(r+1)k}{2}. \] In the case $r=1$, we obtain the following improved bound: for every $k\ge 1$ and every graph $G=(V,E)$ with $|V|\ge k$,
\[
  \sum_{i=1}^k \lambda_i(L(G)) \leq |E|+k^2,
\]
where $L(G)=L_0^{+}(G)$ is the Laplacian matrix of $G$. This improves upon previously known bounds for all $k\ge 3$, and may be seen as a further step towards Brouwer's conjecture, which states that $\sum_{i=1}^k \lambda_i(L(G)) \leq |E|+\binom{k+1}{2}$.

As an additional application, we show that if $X$ is an $(r+1)$-partite $r$-dimensional simplicial complex on vertex set $V$, and $1\le k\le f_{r-1}(X)$, then 
\[
  \sum_{i=1}^{k} \lambda_i(L_{r-1}^+(X)) \le \sum_{i=1}^k \left|\{v\in V:\, \text{deg}^{(r)}_X(v)\ge i\}\right|.
\]
This resolves a special case of a conjecture of Duval and Reiner, which states that the above inequality holds for all simplicial complexes.
\end{abstract}


\section{Introduction}

Let $G=(V,E)$ be a finite, simple graph with $n$ vertices. For a vertex $v\in V$, we denote by $\deg_G(v)=|\{u\in V:\, \{u,v\}\in E\}|$ the \emph{degree} of $v$ in $G$. For $1\le i\le n$, let $d_i(G)$ be the $i$-th largest degree of a vertex in $G$. The \emph{Laplacian matrix} of $G$ is the matrix $L(G)\in \Rea^{n \times n}$ defined by
\[
    L(G)_{u,v}=\begin{cases}
        \deg_G(u) & \text{if } u=v,\\
        -1 & \text{if } \{u,v\}\in E,\\
        0 & \text{otherwise,}
    \end{cases}
\]
for all $u,v\in V$. 
For a symmetric matrix $M\in \Rea^{m\times m}$ and an index $1\le i\le m$, we denote by $\lambda_i(M)$ the $i$-th largest eigenvalue of $M$. It is well-known that $L(G)$ is positive semi-definite and has rank at most $n-1$ (see Section \ref{sec:prelims:incidence} for more details), and therefore its eigenvalues satisfy $\lambda_1(L(G))\ge \lambda_2(L(G))\ge \cdots\ge \lambda_{n-1}(L(G))\ge \lambda_n(L(G))=0$.

The Laplacian spectrum is tightly related to different combinatorial properties of the graph. For example, Kirchhoff's Matrix-Tree Theorem \cite{kirchhoff1847ueber} gives a formula for the number of spanning trees of a graph in terms of the product of its (non-trivial) Laplacian eigenvalues.
The second smallest Laplacian eigenvalue, also known as the \emph{algebraic connectivity} of the graph, is related to various connectivity and expansion properties of the graph (see, for example, \cite{fiedler1973algebraic,alon1985lambda1,mohar1989isoperimetric}) and to the behavior of certain random walks on the graph (see, for example, \cite{caputo2010aldous}).

The study of the relationship between the Laplacian eigenvalues of a graph and its degree sequence has its roots in the pioneering works of Anderson and Morley \cite{anderson1985eigenvalues} and Fiedler \cite{fiedler1973algebraic}. In particular, Anderson and Morley showed in \cite[Theorem 2]{anderson1985eigenvalues} that, for every graph $G=(V,E)$,
\begin{equation}\label{eq:anderson_morley}
    \lambda_1(L(G))\le d_1(G)+d_2(G).
\end{equation}
In fact, they proved the stronger inequality $\lambda_1(L(G))\le \max\{\deg_G(u)+\deg_G(v):\, \{u,v\}\in E\}$. Further results relating the Laplacian spectrum to the degrees of vertices in a graph appear, for example, in \cite{li1997upper,li2000note,brouwer2008lower}.

In \cite{grone1994laplacian}, Grone and Merris studied the relations between sums of Laplacian eigenvalues and sums of degrees in a graph. They observed that for every graph $G=(V,E)$ and $1\le k\le |V|$, 
\[
    \sum_{i=1}^k \lambda_i(L(G)) \ge \sum_{i=1}^k d_i(G).
\]
This bound was later improved by Grone in \cite{grone1995eigenvalues}. 
The \emph{conjugate degree sequence} of an $n$-vertex graph $G$ is the sequence $(d_1'(G),\ldots,d_{n}'(G))$ defined by
\[
    d'_i(G)= \left| \{ 1\le j\le n:\, d_j(G)\ge i\}\right|
\]
for all $1\le i\le n$. Grone and Merris conjectured in \cite{grone1994laplacian} that the sum of the $k$ largest Laplacian eigenvalues of a graph is bounded from above by the sum of the $k$ largest elements of its conjugate degree sequence. The case $k=1$ was verified in \cite{grone1994laplacian}, while the case $k=2$ was proved by Duval and Reiner in \cite{duval2002shifted}. The general case was proved by Bai in \cite{bai2011gronemerris}, building on earlier ideas by Katz \cite{katz2005grone}.

\begin{theorem}[Bai \cite{bai2011gronemerris}]\label{thm:bai}
Let $G=(V,E)$ be a graph. Then, for all $1\le k\le |V|$,
\[ 
    \sum_{i=1}^k \lambda_i(L(G))\le \sum_{i=1}^k d_i'(G).
\]
\end{theorem}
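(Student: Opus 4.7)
The plan is to reformulate the right-hand side, invoke the variational characterization of eigenvalue sums, and then carry out a combinatorial manipulation based on the incidence structure. First, by swapping the order of summation and using $d_i'(G) = |\{v\in V : \deg_G(v) \ge i\}|$, one obtains $\sum_{i=1}^k d_i'(G) = \sum_{v \in V} \min(\deg_G(v), k)$, so the target is equivalent to $\sum_{i=1}^k \lambda_i(L(G)) \le \sum_{v \in V} \min(\deg_G(v), k)$. By Ky Fan's maximum principle and the factorization $L(G) = BB^\T$, where $B$ is the (oriented) incidence matrix with column $b_e = e_u - e_v$ for each edge $e=\{u,v\}\in E$, this amounts to showing
\[
    \max_{P}\sum_{e \in E} \|P b_e\|^2 \le \sum_{v \in V} \min(\deg_G(v), k),
\]
where $P$ ranges over orthogonal projections of rank $k$ on $\Rea^V$.

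With this reformulation in hand, I would attempt to bound $\sum_{e \in E} \|Pb_e\|^2$ via an edge-to-vertex assignment, or a fractional splitting of each edge between its two endpoints. The two basic elementary estimates are $\|Pb_e\|^2 \le 2$ (since the projection $P$ is a contraction and $\|b_e\|^2 = 2$) and $\|Pb_e\|^2 \le 2(\|Pe_u\|^2 + \|Pe_v\|^2)$ (from the triangle inequality applied to $P(e_u-e_v)$); recall also that $\sum_{v \in V} \|Pe_v\|^2 = \trace(P) = k$. The aim is to distribute the edges between their endpoints so that the total contribution at each vertex $v$ is capped by $\min(\deg_G(v), k)$.

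The main obstacle, and the technical heart of the proof, is designing this distribution so that the degree cap and the rank cap are respected simultaneously. A natural strategy is to order the vertices by decreasing degree and apply a greedy, Hall-type matching of edges to endpoints, combined with a local rank argument on principal submatrices of $P$ to absorb the $k$-cap at high-degree vertices. An alternative is to induct on $|V|$, peeling off a maximum-degree vertex while controlling the spectral change via Cauchy interlacing alongside the change in the conjugate degree sequence. Either way, the essential input will be the interplay between the global rank-$k$ constraint on $P$ and the local degree structure of $G$, which is precisely what makes Bai's theorem substantially deeper than the Schur-type bound $\sum_{i=1}^k \lambda_i(L(G)) \le 2\sum_{i=1}^k d_i(G)$ that results from a naive application of these two elementary inequalities alone.
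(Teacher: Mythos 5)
This is not a proof; it is a correct reformulation followed by an explicit admission that the core of the argument is missing. Your opening steps are all fine: the identity $\sum_{i=1}^k d_i'(G)=\sum_{v\in V}\min\{\deg_G(v),k\}$, the Ky Fan variational characterization $\sum_{i=1}^k\lambda_i(L(G))=\max_P\sum_{e\in E}\|Pb_e\|^2$ over rank-$k$ projections, and the two elementary estimates $\|Pb_e\|^2\le 2$ and $\|Pb_e\|^2\le 2(\|Pe_u\|^2+\|Pe_v\|^2)$ together with $\sum_v\|Pe_v\|^2=k$. But everything after that is conditional ("I would attempt", "a natural strategy is", "an alternative is"): you never exhibit an edge-to-vertex distribution scheme, never prove that one respecting both the degree cap and the rank cap exists, and never carry out the proposed induction. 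The step you label "the main obstacle and the technical heart" is exactly the content of the theorem, and it is left entirely open. Note also that the bound is attained with equality for every threshold graph and every $k$, so any local or greedy distribution argument must be tight simultaneously at every vertex for such graphs; this is a strong indication that no simple Hall-type assignment of the kind you sketch will close the gap, and indeed the Grone--Merris conjecture resisted exactly such attacks for over fifteen years.

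For comparison: the paper does not prove this statement at all; it is quoted as Theorem \ref{thm:bai} from Bai's paper. Bai's actual argument is of a different character from what you outline. It proceeds by induction on the number of edges, splits into two cases according to the size of $d_k'(G)$ relative to $k$, exploits the fact (due to Merris) that equality holds for threshold graphs, and in the delicate case uses a careful perturbation of the graph toward a degree-maximal one together with eigenvalue interlacing, building on Katz's earlier partial results. If you want to complete a proof along your lines you would essentially have to rediscover this machinery; the variational setup alone does not suffice.
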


Motivated by Grone and Merris' conjecture, Brouwer conjectured the following.

\begin{conjecture}[Brouwer; see \cite{haemers2010onthesum,brouwer2012book}]\label{conj:brouwer}
    Let $k\ge 1$, and let $G=(V,E)$ be a graph with $|V|\ge k$. Then,
\[
    \sum_{i=1}^k \lambda_i(L(G))\le |E| + \binom{k+1}{2}.
\]
\end{conjecture}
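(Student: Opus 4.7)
The starting point is the $r=1$ specialization of the paper's main inequality, which states that for every graph $G=(V,E)$ and every $1\le k\le |V|/2$,
\[
  \sum_{i=1}^{k}\lambda_i(L(G))\;\le\;\max\Bigl\{\sum_{v\in A}\deg_G(v) \,:\, A\subseteq V,\ |A|=2k\Bigr\}.
\]
Writing $e(A)$ and $e(\bar A)$ for the numbers of edges inside $A\subseteq V$ and inside $\bar A:=V\setminus A$ respectively, a straightforward incidence count yields the identity $\sum_{v\in A}\deg_G(v)=|E|+e(A)-e(\bar A)$. Thus Conjecture~\ref{conj:brouwer} would follow from the paper's bound alone if one could establish the purely combinatorial statement $\max_{|A|=2k}(e(A)-e(\bar A))\le\binom{k+1}{2}$. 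This already fails for $G=K_{2k}$ (where the left-hand side equals $\binom{2k}{2}$), so any proof of Brouwer's conjecture along these lines must refine the spectral step itself whenever the extremizing set $A^\star$ spans many edges.

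The plan is therefore to reopen the proof of the paper's main inequality and recover the slack in this dense regime. That proof writes $\sum_{i=1}^{k}\lambda_i(L(G))$ as $\max \mathrm{tr}(P^{\T}L(G)P)$ over orthonormal $n\times k$ matrices $P$, passes through the coboundary (incidence) operator, and applies a Cauchy--Schwarz / Ky~Fan step that terminates in $\sum_{v\in A^\star}\deg_G(v)$. The information discarded in that step is precisely the inner products between coboundary images of edges sharing a vertex in $A^\star$; each edge inside $A^\star$ contributes two such overlaps, all presently dropped. Carrying the cross terms through should produce a strengthened bound of the form
\[
  \sum_{i=1}^k\lambda_i(L(G))\;\le\;|E|+\bigl(e(A^\star)-e(\bar A^\star)\bigr)-c(G[A^\star]),
\]
and the goal is to show that the structural correction $c(G[A^\star])$ is at least $e(A^\star)-\binom{k+1}{2}$ whenever the bracketed quantity exceeds $\binom{k+1}{2}$. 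Combined with the identity above, this would yield exactly Brouwer's bound.

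The main obstacle is constructing such a correction without destroying the cases where the paper's inequality is already tight. Equality in that inequality is attained on graphs resembling disjoint stars, where $A^\star$ spans no edges at all, so $c$ must vanish there. Simultaneously, split graphs of the form $K_k\vee\overline{K_{n-k}}$ are believed to saturate Brouwer's bound, and $c$ must degenerate to equality on this entire family, whose clique--independent intersection structure is quite rigid. In parallel I would try to interpolate between the paper's degree-sum bound and Bai's Theorem~\ref{thm:bai}, $\sum_{i=1}^k\lambda_i(L(G))\le\sum_{i=1}^k d_i'(G)$: whenever one of the two bounds is slack the other is often tight, and a majorization argument using the identity $\sum_i\lambda_i(L(G))=2|E|$ may cover graphs on which the direct spectral refinement is hardest. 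Even combining these strategies, however, seems to leave a residual family of graphs uncovered, which is the essential reason Conjecture~\ref{conj:brouwer} remains open for $k\ge 3$.
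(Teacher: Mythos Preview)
The statement you are attempting is Conjecture~\ref{conj:brouwer}, which the paper explicitly lists as \emph{open}; there is no proof of it in the paper to compare against. What the paper does prove is the weaker bound of Theorem~\ref{thm:weak_brouwer_improved}, namely $\sum_{i=1}^k\lambda_i(L(G))\le |E|+k^2$.

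Your proposal is not a proof either, and you say as much in your final sentence. The first paragraph is correct and matches the paper: the $r=1$ case of Theorem~\ref{thm:degree_bound} gives $\sum_{i=1}^k\lambda_i(L(G))\le\sum_{i=1}^{2k}d_i(G)$, and this alone only yields $|E|+\binom{2k}{2}$ (the paper's Corollary~\ref{cor:weak_brouwer_complexes} at $r=1$), far from $\binom{k+1}{2}$. Your second paragraph proposes to ``reopen'' the proof and extract a correction term $c(G[A^\star])$, but no candidate for $c$ is given and no argument is offered that such a term with the required properties exists; this is a wish, not a step. Your third paragraph suggests interpolating the degree-sum bound with Bai's Theorem~\ref{thm:bai}. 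That is exactly what the paper \emph{does} carry out: Theorem~\ref{thm:main+bai} averages the two inequalities, and Corollary~\ref{cor:brouwer} extracts $\eps_k(G)\le k^2=\binom{k+1}{2}+\binom{k}{2}$. So the interpolation idea is sound but, as executed in the paper, still falls short of Brouwer's bound by $\binom{k}{2}$.

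In short: there is no gap to name in a proof, because neither you nor the paper has one. Your diagnosis of why the degree-sum bound alone is insufficient is accurate, and your instinct to combine it with Bai's theorem is the same route the paper takes to reach $|E|+k^2$; closing the remaining $\binom{k}{2}$ gap is precisely the open problem.
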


Despite much interest in the problem in the last decade, Conjecture \ref{conj:brouwer} remains open. Some special cases are known to hold: The case $k=1$ follows easily, for example from Anderson and Morley's bound $\lambda_1(L(G))\le d_1(G)+d_2(G)\le |E|+1$. The case $k=2$ was proved by Haemers, Mohammadian, and Tayfeh-Rezaie in \cite{haemers2010onthesum}. The conjecture is also known to hold for some special classes of graphs, such as trees \cite{haemers2010onthesum}, split graphs \cite{berndsen2010thesis, mayard2010thesis}, and regular graphs \cite{berndsen2010thesis,mayard2010thesis}. A conjectural characterization of the graphs achieving equality in Brouwer's bound was proposed by Li and Guo in \cite{li2022full}. See, for example, \cite{chen2018note,chen2019onbrouwers, chen2025more, cooper2021constraints,du2012upper,fritscher2011onthesum, ganie2020further, guan2014onthesum, rocha2020aas, torres2024brouwer ,helmberg2017spectral} for more related work.

In \cite{lew2024partition}, we proved the following weak version of Conjecture \ref{conj:brouwer}.

\begin{theorem}[See {\cite[Theorem 1.3, Corollary 1.6]{lew2024partition}}]\label{thm:weak_brouwer}
    Let $k\ge 1$, and let $G=(V,E)$ be a graph with $|V|\ge k$. Then,
    \[
        \sum_{i=1}^k \lambda_i(L(G))\le |E| +\min \left\{2k^2 -\ceilfrac{k}{2}, k^2 + 15 k\log{k} +65k\right\},
    \]
    where $\log x$ denotes the natural logarithm of $x$.
\end{theorem}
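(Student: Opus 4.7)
The plan is to establish the two estimates in Theorem \ref{thm:weak_brouwer} by separate arguments, since they arise from different techniques and grow at different rates. For the first estimate $|E|+2k^2-\lceil k/2\rceil$, the natural starting point is Ky Fan's maximum principle,
\[
    \sum_{i=1}^{k}\lambda_i(L(G))=\max\bigl\{\trace(PL(G))\,:\,P=P^{2}=P^{\T},\ \trace(P)=k\bigr\}.
\]
I would decompose $L(G)=\sum_{e\in E}L_e$ into rank-one edge Laplacians, each of trace $2$, so that $\trace(PL_e)\in[0,2]$ for every edge, and the task reduces to controlling the total ``excess'' $\sum_{e}(\trace(PL_e)-1)$. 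I would then isolate the set $A\subset V$ of the $2k$ vertices with largest diagonal entries $P_{v,v}$: the constraint $\trace(P)=k$ forces the projection mass to concentrate on $A$, so edges touching $V\setminus A$ contribute close to $1$ on average, while edges inside $A$ are at most $\binom{2k}{2}$ in number. The saving of $\lceil k/2\rceil$ over the naive $2\binom{2k}{2}$ bound should emerge from a convexity argument on the diagonal profile $(P_{v,v})_{v\in V}$, exploiting the rank-$k$ (not merely trace-$k$) constraint.

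For the second estimate $|E|+k^2+15k\log k+65k$, the logarithmic factor strongly suggests an iterative bootstrap. I would apply the first estimate (or a preliminary weaker partition bound) recursively: at each level, peel off the top $O(k/2^j)$ eigenvalues, relate them to a reduced subgraph or projected spectrum, and recurse. After $\log_2 k$ levels the leading $k^2$ term is isolated, and the total accumulated overhead is of order $\sum_{j} O\!\bigl((k/2^j)\log(k/2^j)\bigr)=O(k\log k)$. Tracking the multiplicative constants through this doubling procedure would produce the explicit values $15$ and $65$.

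The main obstacle in both routes is choosing the partition---of the vertex set in the first case, of the spectrum in the second---so that cross-terms remain controllable. The first bound's shaving of $\lceil k/2\rceil$ depends on using the rank, not merely the trace, of $P$, and the recursion in the second must be balanced so that the per-level loss times the depth stays $O(k\log k)$; a clumsier pairing would lose a factor of $k$ at either end.
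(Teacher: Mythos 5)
First, a point of order: the paper does not prove this statement at all. Theorem \ref{thm:weak_brouwer} is quoted from an external reference (\cite{lew2024partition}, Theorem 1.3 and Corollary 1.6) purely as background motivation for the new bound $|E|+k^2$ proved here, so there is no internal proof to compare your proposal against. I can therefore only assess your sketch on its own terms.

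On its own terms, the proposal is a strategy outline rather than a proof, and its first route contains a concrete flaw. After writing $\sum_{i=1}^k\lambda_i(L(G))=\max_P\trace(PL(G))$ over rank-$k$ orthogonal projections and decomposing $L(G)=\sum_{e}L_e$, you let $A$ be the $2k$ vertices with largest diagonal entries $P_{v,v}$ and claim that edges touching $V\setminus A$ ``contribute close to $1$ on average.'' The trace constraint only gives $P_{v,v}\le 1/2$ for $v\notin A$, and for an edge $e=\{u,v\}$ with both endpoints satisfying $P_{u,u}=P_{v,v}=1/2$ and $P_{u,v}=-1/2$ one gets $\trace(PL_e)=P_{u,u}+P_{v,v}-2P_{u,v}=2$, not $1$; summing $\trace(PL_e)$ over all edges just reproduces $\trace(PL(G))$, so the argument is circular without an additional global mechanism. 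The advertised saving of $\lceil k/2\rceil$ is then attributed to an unspecified ``convexity argument'' exploiting the rank constraint, which is exactly the step that would have to carry the proof and is not carried out. The second route is even more speculative: no reduction is defined that lets you ``peel off the top $O(k/2^j)$ eigenvalues'' and pass to a subgraph while controlling the cross-terms, and the constants $15$ and $65$ are asserted to be trackable rather than derived. (For what it is worth, the cited source obtains these bounds by a partition-based argument --- bounding the eigenvalue sum via vertex/edge partitions of $G$ --- not via projection matrices, and the $k\log k$ term there comes from a careful choice of partition rather than a spectral recursion.) As it stands, neither bound is established.
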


In \cite{eckmann1945haromic}, Eckmann extended the notion of Laplacian operators from graphs to simplicial complexes. Recall that a simplicial complex $X$ on a finite vertex set $V$ is a family of subsets of $V$ that is closed under inclusion. That is, if $\tau\in X$ and $\sigma\subset \tau$, then $\sigma\in X$. The elements of $X$ are called the \emph{faces} or \emph{simplices} of $X$. The dimension of a simplex $\sigma\in X$ is $\dim(\sigma)=|\sigma|-1$. In particular, the empty set is considered a $(-1)$-dimensional simplex of $X$.
The dimension of the simplicial complex $X$, denoted by $\dim(X)$, is the maximal dimension of a simplex in $X$. For $-1\le i\le \dim(X)$, let $X(i)$ be the set of $i$-dimensional faces of $X$, and let $f_i(X)=|X(i)|$. We usually identify $X(0)$ with the vertex set $V$. We may identify a one-dimensional simplicial complex $X$ with the graph $G=(X(0),X(1))$.

Let $X$ be a simplicial complex on vertex set $V$. For $1\le r\le \dim(X)$ and $\sigma\in X$, the \emph{$r$-degree} of $\sigma$, denoted by $\deg_X^{(r)}(\sigma)$, is the number of $r$-dimensional simplices of $X$ containing $\sigma$. We fix an arbitrary linear order $<$ on the vertex set $V$. Let $0\le r\le \dim(X)$, $\sigma\in X(r-1)$, and $\tau\in X(r)$ such that $\sigma\subset \tau$. Let $u$ be the unique vertex in $\tau\setminus \sigma$. We define
\[
    (\tau:\sigma)= (-1)^{|\{v\in \tau:\, v<u\}|}.
\]
Let $1\le r\le \dim(X)$. The \emph{$(r-1)$-dimensional upper Laplacian matrix} of $X$ is the matrix $L_{r-1}^{+}(X)\in \Rea^{f_{r-1}(X)\times f_{r-1}(X)}$ defined by
\[
    L_{r-1}^{+}(X)_{\tau,\eta}= \begin{cases}
        \deg_{X}^{(r)}(\tau) & \text{if } \tau=\eta,\\
        -(\tau:\tau\cap \eta)(\eta:\tau\cap\eta) & \text{if } |\tau\cap \eta|=r-1,\, \tau\cup\eta\in X,\\
        0 & \text{otherwise,}
    \end{cases}
\]
for all $\tau,\eta\in X(r-1)$. Note that for a graph $G$, we have $L_0^{+}(G)=L(G)$.

Eckmann observed in \cite{eckmann1945haromic} that, in analogy with the Hodge theorem in Riemannian geometry, the $i$-dimensional cohomology (with real coefficients) of a simplicial complex can be determined by studying its $i$-dimensional Laplacian operator.
 Since their introduction, high-dimensional Laplacians have found numerous applications in areas such as combinatorics, topology, and algebra. For example, in his seminal work \cite{garland1973padic}, Garland studied the Laplacian spectra of Bruhat-Tits buildings associated with linear algebraic groups over non-archimedean local fields, and proved as a consequence a conjecture of Serre on the cohomology groups of finite quotients of these buildings. 
In recent years, new extensions and applications in diverse fields of Garland's arguments have been discovered (see, for example, \cite{ballmann1997l2,zuk1996propriete,kahle2014sharp,aharoni2005eigenvalues,oppenheim2018local,anari2024log}). For further work on high-dimensional Laplacian operators of simplicial complexes, see, for example, \cite{kalai1983enumeration,kook2000combinatorial,horak2013spectra}.

Duval and Reiner proposed in \cite{duval2002shifted} the following   generalization of the Grone-Merris conjecture.

\begin{conjecture}[Duval, Reiner {\cite[Conjecture 1.2]{duval2002shifted}}]\label{conj:duval_reiner}
Let $X$ be a simplicial complex on vertex set $V$, and let $1\le r\le \dim(X)$. Then, for all $1\le k\le f_{r-1}(X)$,
\[
    \sum_{i=1}^k \lambda_i(L_{r-1}^{+}(X)) \le \sum_{i=1}^k \left| \{ v\in V:\, \deg_X^{(r)}(v)\ge i\}\right|.
\]
\end{conjecture}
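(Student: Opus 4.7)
The plan is to extend Bai's proof of the Grone--Merris conjecture (Theorem~\ref{thm:bai}) from graphs to arbitrary simplicial complexes, using the main theorem of this paper as an auxiliary spectral bound. Writing $\Psi_k(X)$ for the right-hand side of Conjecture~\ref{conj:duval_reiner}, I would first record the reformulation
\[
    \Psi_k(X) = \sum_{v \in V} \min\bigl(\deg_X^{(r)}(v),\, k\bigr),
\]
which exhibits the conjectural bound as a sum of truncated vertex $r$-degrees and makes the behavior of $\Psi_k$ under vertex deletion explicit.

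I would proceed by induction on $|V|$. The inductive step follows the Bai--Katz template: pick a vertex $v$ of maximum $r$-degree, set $X' = X \setminus v$ to be the antistar of $v$ (the subcomplex of faces not containing $v$), and attempt to prove an inequality of Ky Fan form
\[
    \sum_{i=1}^k \lambda_i\bigl(L_{r-1}^+(X)\bigr) \le \sum_{i=1}^{k-1}\lambda_i\bigl(L_{r-1}^+(X')\bigr) + \bigl(\Psi_k(X) - \Psi_{k-1}(X')\bigr).
\]
The reformulation above guarantees $\Psi_k(X) - \Psi_{k-1}(X') \ge \min(\deg_X^{(r)}(v),\,k)$, so the right-hand side is well-behaved, and combined with the inductive hypothesis applied to $X'$ such an inequality closes the induction. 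To establish it, one decomposes $L_{r-1}^+(X)$ into a ``bulk'' block indexed by $X'(r-1)$ and a ``link'' block indexed by the $(r-1)$-faces of $X$ through $v$ (which are naturally in bijection with the $(r-2)$-faces of $\lk_X(v)$, the corresponding principal submatrix being essentially $L_{r-2}^+(\lk_X(v))$), and applies Cauchy-type interlacing between $L_{r-1}^+(X)$ and $L_{r-1}^+(X')$ together with a direct spectral estimate on the link block.

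The main obstacle is that, unlike the graph case, the off-block coupling does not have low rank: deleting $v$ in a high-dimensional complex changes the off-diagonal entries of $L_{r-1}^+$ on all pairs of faces $\sigma,\eta \in X'(r-1)$ which together spanned an $r$-face through $v$, and this perturbation can have rank up to $\deg_X^{(r)}(v)$. Controlling its contribution to the Ky Fan $k$-norm seems to require a spectral estimate on $\lk_X(v)$, a complex of dimension $\dim(X)-1$, naturally suggesting a double induction on the pair $(\dim X,\,|V|)$ in which the main theorem of this paper provides the base spectral control. Should this Bai-style induction prove intractable, a complementary strategy would be to reduce the general case to the $(r+1)$-partite case already established in this paper, for instance by averaging the top-$k$ spectral sum of $X$ over a carefully chosen family of $(r+1)$-colorings of $V$ and extracting the desired bound from the partite estimate.
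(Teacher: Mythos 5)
This statement is the Duval--Reiner conjecture, which the paper does \emph{not} prove: it is stated as an open conjecture, and the paper only resolves the special case of $(r+1)$-partite $r$-dimensional complexes (Corollary \ref{cor:dr_partite}, via Theorem \ref{thm:degree_bound_partite} and Ky Fan's inequality applied to the decomposition $L_r^-(X)=\sum_{j=1}^{r+1}L_j$). Your text is a research plan rather than a proof, and it does not close the gap. The central step of your Bai-style induction --- the Ky Fan--type inequality
\[
\sum_{i=1}^k \lambda_i\bigl(L_{r-1}^+(X)\bigr) \le \sum_{i=1}^{k-1}\lambda_i\bigl(L_{r-1}^+(X')\bigr) + \bigl(\Psi_k(X)-\Psi_{k-1}(X')\bigr)
\]
--- is precisely the hard content, and you do not establish it. You correctly identify why the graph argument breaks: in Bai's proof the coupling between the deleted vertex and the rest is controlled by rank-one/low-rank perturbations plus delicate majorization arguments, whereas here deleting $v$ perturbs the bulk block $X'(r-1)\times X'(r-1)$ by a matrix whose rank can be as large as $\deg_X^{(r)}(v)$, and no bound on its contribution to the sum of the top $k$ eigenvalues is given. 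Naming the obstacle is not the same as overcoming it; as written, the induction simply does not go through.

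The fallback strategy of averaging over $(r+1)$-colorings is also unsubstantiated. An arbitrary complex need not be $(r+1)$-partite, and restricting to rainbow faces of a coloring deletes $r$-faces, so there is no identity expressing $L_r^-(X)$ (or its top-$k$ eigenvalue sum) as an average of the corresponding quantities for the rainbow subcomplexes; Ky Fan subadditivity requires an exact matrix decomposition, which you do not supply, and the right-hand side $\Psi_k$ would also change under such restrictions in an uncontrolled way. In short: the correct assessment is that the general statement remains open, the paper proves only the partite case by a quite different (and complete) argument, and your proposal contains no step that advances beyond what is already known.
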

In \cite{abebepartial,abebe2019conjectural}, Abebe and Pfeffer considered various possible high-dimensional extensions of Brouwer's conjecture. Based on some numerical experiments, we propose the following variant, which is a somewhat stronger version of one of the conjectures in \cite{abebepartial} (see \cite[Section 3.2.3]{abebepartial}).

\begin{conjecture}\label{conj:higher_brouwer}
   Let $X$ be a simplicial complex on vertex set $V$, and let $1\le r\le \dim(X)$. Then, for all $1\le k\le f_{r-1}(X)$,
\[
    \sum_{i=1}^k \lambda_i(L_{r-1}^{+}(X)) \le f_{r}(X) + \binom{k}{2}+rk.
\] 
\end{conjecture}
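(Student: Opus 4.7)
My approach is to reduce Conjecture \ref{conj:higher_brouwer} to a purely combinatorial inequality via the generalized Anderson--Morley bound announced in the abstract, and then attack that inequality by exploiting the structure of the maximizing set of $(r-1)$-faces. By that bound, for $1 \le k \le f_{r-1}(X)/(r+1)$,
$$
    \sum_{i=1}^{k} \lambda_i(L_{r-1}^{+}(X)) \le \max_{\substack{A \subset X(r-1)\\ |A| = (r+1)k}} \sum_{\sigma \in A} \deg_X^{(r)}(\sigma).
$$
Setting $x_\tau = |\{\sigma \in A : \sigma \subset \tau\}| \in \{0,1,\ldots,r+1\}$ for each $\tau \in X(r)$, the right-hand side equals $\sum_\tau x_\tau$, and
$$
    \sum_\tau x_\tau \;=\; |\{\tau : x_\tau \ge 1\}| + \sum_\tau \max(x_\tau - 1,\, 0) \;\le\; f_r(X) + \sum_\tau \max(x_\tau - 1,\, 0),
$$
so it would suffice to prove the combinatorial inequality
\begin{equation*}
    \sum_{\tau \in X(r)} \max(x_\tau - 1,\, 0) \le \binom{k}{2} + rk. \tag{$\star$}
\end{equation*}
For the range $k > f_{r-1}(X)/(r+1)$, I would handle the conjecture separately using the trace identity $\sum_i \lambda_i(L_{r-1}^{+}(X)) = (r+1) f_r(X)$ together with a majorization argument based on the bounds already proved for smaller $k$.

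The main obstacle is that $(\star)$ is false for arbitrary $A$: in the graph case $r = 1$, choosing $A$ to be $2k$ vertices of $K_n$ (with $n \ge 2k$) yields $\sum_e \max(x_e - 1, 0) = \binom{2k}{2}$, which exceeds $\binom{k+1}{2}$ for $k \ge 2$. So any proof must genuinely exploit that $A$ maximizes $\sum_{\sigma \in A} \deg_X^{(r)}(\sigma)$, forcing $A$ to consist of the $(r+1)k$ $(r-1)$-faces of largest $r$-degree. I would attempt to use this via a swap argument: if some $\sigma \in A$ contributes to few faces $\tau$ with large $x_\tau$, replacing it by a high-degree face $\sigma' \notin A$ should strictly increase $\sum_\tau x_\tau$, contradicting maximality. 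A quantitative version, combined with induction on $k$, should reduce $(\star)$ to configurations where $A$ is concentrated on a small \emph{heavy core} of $X$ on roughly $k + r$ vertices, modelled on the extremal example $X = K^{(r)}_{k+r}$.

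The central difficulty is that the case $r = 1$ of Conjecture \ref{conj:higher_brouwer} is exactly Brouwer's conjecture (Conjecture \ref{conj:brouwer}), which remains open for $k \ge 3$ despite substantial work (see Theorem \ref{thm:weak_brouwer} and the surrounding references). Consequently, a complete proof must subsume a proof of Brouwer's conjecture. A realistic intermediate target would therefore be a bound of the form $f_r(X) + c_r k^2$ with $c_r$ polynomial in $r$, extending the sharper estimate $|E| + k^2$ proved in the paper for $r = 1$; such a bound should follow by combining the reduction above with the partition techniques of \cite{lew2024partition}, and would already improve the naive corollary $f_r(X) + \binom{(r+1)k}{2}$ mentioned in the abstract.
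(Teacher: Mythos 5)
The statement you were asked about is a \emph{conjecture} (Conjecture~\ref{conj:higher_brouwer}); the paper offers no proof of it, and indeed cannot: its $r=1$ case is exactly Brouwer's conjecture (Conjecture~\ref{conj:brouwer}), which is open, and Proposition~\ref{prop:extremal_brouwer_complexes} shows via coning that every extremal graph for Brouwer's bound produces an extremal complex for every $r\ge 2$, so the higher-dimensional cases subsume the graph case. There is therefore no proof in the paper to compare yours against, and what you have written is, by your own admission, not a proof but a strategy outline with an unfilled central gap.

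Concretely: your reduction via Theorem~\ref{thm:degree_bound} and the identity $\sum_\tau x_\tau = |\{\tau : x_\tau\ge 1\}| + \sum_\tau \max(x_\tau-1,0)$ is precisely the argument the paper uses to prove the weaker Corollary~\ref{cor:weak_brouwer_complexes} (there the excess term is bounded by counting pairs $\{\sigma,\sigma'\}\subset A$ sharing an $r$-face, giving $\binom{(r+1)k}{2}$ rather than $\binom{k}{2}+rk$). Your own counterexample shows that $(\star)$ fails for the maximizing $A$, not merely for arbitrary $A$: in $K_n$ all degrees are equal, so every $2k$-subset of vertices is a maximizer and the excess is $\binom{2k}{2}$. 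Worse, the first step of the reduction is already too lossy --- for $K_{2k}$ the sum of the $2k$ largest degrees is $2k(2k-1)$, which exceeds $|E|+\binom{k+1}{2}$ for all $k\ge 2$, so no refinement of $(\star)$ can rescue the approach; any proof must use spectral information beyond Theorem~\ref{thm:degree_bound}. The ``swap argument,'' the ``heavy core,'' the induction on $k$, and the majorization handling of the range $k>f_{r-1}(X)/(r+1)$ are all named but not executed. Your closing paragraph correctly retreats to proposing a weaker target of the form $f_r(X)+c_rk^2$, which would be a legitimate and interesting result in the spirit of Theorem~\ref{thm:weak_brouwer_improved}, but it is not the statement at hand.
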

See Section \ref{sec:conclusion:brower_for_complexes} for further details on this conjecture. 
In this paper, we continue the study of sums of Laplacian eigenvalues of graphs and simplicial complexes, and their relation to their degree sequences. Our main result is the following.

For a simplicial complex $X$, $1\le r\le \dim(X)$, and $1\le i\le f_{r-1}(X)$, let $d_i^{(r)}(X)$ be the $i$-th largest $r$-degree of a simplex in $X(r-1)$.

\begin{theorem}\label{thm:degree_bound}
    Let $X$ be a simplicial complex, $1\le r\le \dim(X)$, and $1\le k\le f_{r-1}(X)/(r+1)$. Then,
    \[
        \sum_{i=1}^k \lambda_i(L_{r-1}^{+}(X))\le \sum_{i=1}^{(r+1)k} d^{(r)}_i(X).
    \]
\end{theorem}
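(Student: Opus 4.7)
The plan is to apply Ky Fan's maximum principle and then extract from the optimal $k$-dimensional subspace a combinatorial witness of size $(r+1)k$. Writing $L_{r-1}^{+}(X)=BB^T$, where $B\in\Rea^{X(r-1)\times X(r)}$ is the oriented boundary matrix with columns $b_\tau=\partial_r e_\tau$ satisfying $\|b_\tau\|^2=r+1$, Ky Fan gives
\[
  \sum_{i=1}^k \lambda_i(L_{r-1}^{+}(X))=\max_{P}\trace(P L_{r-1}^{+}(X))=\max_P \sum_{\tau\in X(r)} \|Pb_\tau\|^2,
\]
where the maximum is over rank-$k$ orthogonal projections $P$ on $\Rea^{X(r-1)}$.

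The main step is to show that, for every such $P$, there is a subset $A\subseteq X(r-1)$ with $|A|=(r+1)k$ satisfying $\sum_{\tau}\|Pb_\tau\|^2 \le \sum_{\sigma\in A}\degree{X}^{(r)}(\sigma)$. Once this is established, taking the maximum of the right-hand side over all such $A$ — attained by the top $(r+1)k$ simplices in $r$-degree — gives exactly $\sum_{i=1}^{(r+1)k} d_i^{(r)}(X)$, which is the theorem. In the base case $k=1$ this recovers Anderson–Morley: the Gershgorin-type row-sum estimate on $B^TB$ gives $\lambda_1(L_{r-1}^{+}(X))\le \max_{\tau\in X(r)}\sum_{\sigma\subset\tau}\degree{X}^{(r)}(\sigma)$, and the $r$-face $\tau^*$ achieving the maximum provides $A=\partial\tau^*$ with $|A|=r+1$. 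For general $k$, I would attempt a selection procedure (iterative or via a matching-type argument on $X(r)$) that produces $k$ "nearly face-disjoint'' $r$-simplices whose boundary union forms $A$, while controlling the error from any forced overlaps by a reduction to a smaller sub-complex and inducting on $k$ (using the already-established $k=1$ case as the inductive base). The hope is that the construction in \cite{lew2024partition} can be sharpened to produce sets of size $(r+1)k$ rather than the larger sets giving the weaker bounds there.

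The principal obstacle is avoiding the loss factor of $r+1$ incurred by the natural Cauchy–Schwarz estimate $\|Pb_\tau\|^2\le(r+1)\sum_{\sigma\subset\tau}\langle e_\sigma,Pe_\sigma\rangle$, which sums to $(r+1)\sum_{\sigma}\degree{X}^{(r)}(\sigma)\,P_{\sigma\sigma}$ and yields only $(r+1)\sum_{i=1}^k d_i^{(r)}(X)$ — strictly weaker than the theorem whenever the degree sequence is non-uniform (e.g.\ stars or books). To get the tighter bound, the signed structure of $B$ (equivalently, the identity $\partial_{r-1}\partial_r=0$) and the rank constraint on $P$ must both be used: they force cancellations among the off-diagonal contributions $\sum_{\sigma<\sigma'}L_{\sigma\sigma'}^{+}P_{\sigma\sigma'}$ that are invisible to a purely entrywise estimate. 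Equality for disjoint unions of $r$-simplices (where $\sum_{i=1}^k\lambda_i=(r+1)k$ matches $\sum_{i=1}^{(r+1)k}d_i^{(r)}$) confirms sharpness and suggests that the combinatorial witness $A$ should arise, in the extremal regime, as the disjoint union of the boundaries of $k$ face-disjoint $r$-faces — which is the structural picture that a proof must recover in general.
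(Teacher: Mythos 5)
Your setup is fine as far as it goes: the Ky Fan variational formula $\sum_{i=1}^k\lambda_i(L_{r-1}^+(X))=\max_P\sum_{\tau\in X(r)}\|Pb_\tau\|^2$ is correct, and your $k=1$ base case (Ger\v{s}gorin on $B^TB=L_r^-(X)$, whose row sums are $\sum_{\sigma\subset\tau}\deg_X^{(r)}(\sigma)$) is a valid proof of that case. But the heart of the theorem is the claim that for every rank-$k$ projection $P$ there is a set $A$ with $|A|=(r+1)k$ and $\sum_\tau\|Pb_\tau\|^2\le\sum_{\sigma\in A}\deg_X^{(r)}(\sigma)$, and this is left entirely unproved: you say you ``would attempt a selection procedure'' and ``hope'' that an earlier construction can be sharpened. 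The proposed induction on $k$ does not obviously close either, because removing an $r$-face (or passing to a subcomplex) does not interact cleanly with the optimal projection $P$ --- the top $k-1$ eigenvalues of the modified complex need not control the residual trace, and any forced overlaps between the chosen $r$-faces cost exactly the kind of error you have no mechanism to absorb. You have correctly diagnosed the obstacle (the factor $r+1$ lost in the entrywise estimate) but not overcome it, so this is a genuine gap rather than a complete proof.

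For comparison, the paper resolves precisely this obstacle by working on the $r$-face side from the start, with the lower Laplacian $L_r^-(X)=B^TB$ (same nonzero spectrum as $L_{r-1}^+(X)$). Setting $d=d^{(r)}_{(r+1)k}(X)$, it writes $L_r^-(X)=L'+\sum_{i=1}^{(r+1)k}\bigl(1-d/d_i^{(r)}(X)\bigr)L_i$, where $L_i$ is supported on the $r$-faces containing the $i$-th highest-degree $(r-1)$-face $\sigma_i$ and has $d_i^{(r)}(X)$ as its unique nonzero eigenvalue. The weights are chosen so that every row sum of $L'$ equals $\sum_{\sigma\subset\tau}\deg_X^{(r)}(\sigma)\min\{d/\deg_X^{(r)}(\sigma),1\}\le(r+1)d$, whence $\lambda_1(L')\le(r+1)d$ by the Frobenius bound; Ky Fan subadditivity of $\sum_{i=1}^k\lambda_i$ then gives $k(r+1)d+\sum_{i=1}^{(r+1)k}(d_i^{(r)}(X)-d)=\sum_{i=1}^{(r+1)k}d_i^{(r)}(X)$. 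The row-sum quantity $\sum_{\sigma\subset\tau}\deg_X^{(r)}(\sigma)$ that appears naturally there is exactly what your projection-side estimate cannot reach without losing the factor $r+1$; if you want to salvage your outline, transferring the whole argument to $L_r^-(X)$ and replacing the selection procedure with such a weighted matrix decomposition is the step you are missing.
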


The $r=1,\, k=1$ case of Theorem \ref{thm:degree_bound} is exactly Anderson and Morley's bound \eqref{eq:anderson_morley}. The $r\ge 1, k=1$ case was recently proved by Fan, Wu, and Wang in \cite[Theorem 3.5]{fan2025largest} (in fact, they proved the stronger bound $\lambda_1(L_{r-1}^{+}(X))\le \max_{\tau\in X(r)} \left(\sum_{\sigma\in X(r-1),\sigma\subset \tau} \deg_X^{(r)}(\sigma)\right)$).

The bound in Theorem \ref{thm:degree_bound} is tight for all $r\ge 1$ and $k\ge 1$. Indeed, if $X$ is a simplicial complex in which every $(r-1)$-dimensional face is contained in exactly one $r$-dimensional simplex, then $\sum_{i=1}^k \lambda_i(L^+_{r-1}(X))=\sum_{i=1}^{(r+1)k}d_i^{(r)}(X)=(r+1)k$ for all $1\le k\le f_{r-1}(X)/(r+1)$. In the graphical case, the corresponding example is a perfect matching.  
It appears that no connected graph achieves equality in Theorem \ref{thm:degree_bound} for $k>1$. However, for every $k$, there exist connected graphs for which the difference between the sum of the $2k$ largest degrees and the sum of the $k$ largest Laplacian eigenvalues is arbitrarily close to zero (see Section \ref{sec:examples} for more details).

As a consequence of Theorem \ref{thm:degree_bound}, we obtain the following result.

\begin{corollary}\label{cor:weak_brouwer_complexes}Let $X$ be a simplicial complex, and let 
$1\le r\le \dim(X)$. Then, for all $1\le k\le f_{r-1}(X)$,
\[
\sum_{i=1}^{k} \lambda_i(L_{r-1}^{+}(X)) \le f_{r}(X) +  \binom{(r+1)k}{2}.
\]
\end{corollary}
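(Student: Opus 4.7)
The plan is to combine Theorem \ref{thm:degree_bound} with a simple combinatorial inequality bounding sums of $r$-degrees. Specifically, the key auxiliary statement I would establish is that for any $m$ distinct $(r-1)$-faces $\sigma_1,\ldots,\sigma_m\in X(r-1)$,
\[
\sum_{i=1}^m \deg_X^{(r)}(\sigma_i) \le f_r(X) + \binom{m}{2}.
\]
To prove this, I would exchange the order of summation: setting $a_\tau=|\{i:\sigma_i\subset\tau\}|$ for each $\tau\in X(r)$, the left-hand side equals $\sum_{\tau\in X(r)} a_\tau$. Using the elementary inequality $a\le 1+\binom{a}{2}$, valid for every integer $a\ge 0$, this sum is at most $f_r(X)+\sum_{\tau\in X(r)}\binom{a_\tau}{2}$. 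The final term counts pairs $\{i,j\}$ with $\sigma_i$ and $\sigma_j$ both contained in some common $r$-face; but two distinct $(r-1)$-faces lie in at most one common $r$-face (namely their union, provided it is in $X$), so this count is at most $\binom{m}{2}$.

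With the auxiliary bound in hand, I would split into two cases. For $1\le k\le f_{r-1}(X)/(r+1)$, I would apply Theorem \ref{thm:degree_bound} to get
\[
\sum_{i=1}^k \lambda_i(L_{r-1}^{+}(X)) \le \sum_{i=1}^{(r+1)k} d_i^{(r)}(X),
\]
and then apply the auxiliary inequality with $m=(r+1)k$ to the $(r+1)k$ elements of $X(r-1)$ realizing the largest $r$-degrees, which immediately yields $f_r(X)+\binom{(r+1)k}{2}$ as desired.

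For the complementary range $f_{r-1}(X)/(r+1) < k \le f_{r-1}(X)$, Theorem \ref{thm:degree_bound} does not apply directly. Since $L_{r-1}^{+}(X)$ is positive semi-definite with $\mathrm{trace}(L_{r-1}^{+}(X))=\sum_{\sigma\in X(r-1)} \deg_X^{(r)}(\sigma)=(r+1)f_r(X)$, I would use the crude bound $\sum_{i=1}^k \lambda_i(L_{r-1}^{+}(X))\le (r+1)f_r(X)$ and reduce to showing $r\,f_r(X)\le \binom{(r+1)k}{2}$. Applying the auxiliary inequality with $m=f_{r-1}(X)$ (that is, to the full set $X(r-1)$) gives $(r+1)f_r(X)\le f_r(X)+\binom{f_{r-1}(X)}{2}$, i.e., $r\,f_r(X)\le\binom{f_{r-1}(X)}{2}$, and since $(r+1)k > f_{r-1}(X)$, the monotonicity of the binomial coefficient yields $\binom{(r+1)k}{2}\ge\binom{f_{r-1}(X)}{2}$, completing the proof. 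There is no serious obstacle here; the only small trick is the inequality $a\le 1+\binom{a}{2}$ combined with the observation that two distinct $(r-1)$-faces share at most one $r$-coface.
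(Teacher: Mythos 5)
Your proposal is correct and follows essentially the same route as the paper: apply Theorem \ref{thm:degree_bound} (or the trace bound when $(r+1)k>f_{r-1}(X)$), then bound the resulting sum of $r$-degrees by an incidence count using $a\le 1+\binom{a}{2}$ together with the observation that two distinct $(r-1)$-faces lie in at most one common $r$-face. The only cosmetic difference is that you package the incidence argument as a standalone lemma, whereas the paper carries it out inline with the counts $c_i$.
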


Note that Corollary \ref{cor:weak_brouwer_complexes} is a weak version of the bound in Conjecture \ref{conj:higher_brouwer}. In the graphical case, we obtain, by combining Theorem \ref{thm:degree_bound} with Theorem \ref{thm:bai}, the following improved bound.

\begin{theorem}\label{thm:weak_brouwer_improved}
    Let $k\ge 1$, and let $G=(V,E)$ be a graph with $|V|\ge k$. Then,
    \[
        \sum_{i=1}^k \lambda_i(L(G))\le |E|+k^2.
    \]
\end{theorem}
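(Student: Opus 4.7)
The plan is to combine the two upper bounds already at our disposal in the $r=1$ setting: Theorem~\ref{thm:degree_bound} gives
\[ \sum_{i=1}^k \lambda_i(L(G)) \le \sum_{i=1}^{2k} d_i(G) \]
whenever $2k \le |V|$, while Bai's Theorem~\ref{thm:bai} gives
\[ \sum_{i=1}^k \lambda_i(L(G)) \le \sum_{i=1}^k d_i'(G) = \sum_{v \in V}\min(\deg_G(v), k) \]
for all $k \le |V|$. For each pair $(G,k)$, I will pick whichever of these two bounds turns out to be at most $|E| + k^2$, splitting into cases according to whether $|V| \ge 2k$.

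In the main case $|V| \ge 2k$: if Theorem~\ref{thm:degree_bound}'s bound already satisfies $\sum_{i=1}^{2k} d_i(G) \le |E| + k^2$, we are done. Otherwise $\sum_{i=1}^{2k} d_i(G) > |E| + k^2$, and since $\sum_i d_i(G) = 2|E|$ this rearranges to $\sum_{i=2k+1}^{|V|} d_i(G) < |E| - k^2$. I then bound Bai's sum by splitting it at index $2k$: the first $2k$ terms contribute at most $2k \cdot k = 2k^2$ (because $\min(d_i, k) \le k$), while the remaining terms contribute at most $\sum_{i=2k+1}^{|V|} d_i(G) < |E| - k^2$, so in total Bai's bound is strictly less than $2k^2 + (|E| - k^2) = |E| + k^2$, as required.

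In the remaining case $k \le |V| < 2k$, Theorem~\ref{thm:degree_bound} does not apply and I rely on Bai's bound directly. Note that $\sum_v \min(\deg_G(v), k)$ is at most both $\sum_v \deg_G(v) = 2|E|$ and $|V| \cdot k$. If $|E| \le k^2$, the first estimate gives $\sum_v \min(\deg_G(v), k) \le 2|E| \le |E| + k^2$; otherwise $|E| > k^2$, and since $|V| - k < k$ in this regime, we have $|V| \cdot k = k(|V| - k) + k^2 < k^2 + k^2 \le |E| + k^2$.

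The proof is essentially two-case bookkeeping, so no substantive obstacle arises once Theorems~\ref{thm:degree_bound} and~\ref{thm:bai} are granted. The only real subtlety is the case split itself: the $2k$-degree-sum bound and Bai's bound are complementary in just the right way, so that when one is loose the other pins the sum down, and the threshold $|V| = 2k$ at which Theorem~\ref{thm:degree_bound} stops being available is precisely where the crude pigeonhole estimate $|V| \cdot k < 2k^2$ makes Bai's bound alone sufficient.
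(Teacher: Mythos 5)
Your proof is correct, and it rests on the same two ingredients as the paper's argument --- Theorem~\ref{thm:degree_bound} with $r=1$ and Bai's Theorem~\ref{thm:bai} --- but it combines them in a genuinely different way. The paper rewrites each bound in the form $2\eps_k(G)\le\cdots$, \emph{adds} the two inequalities and divides by two, which yields the sharper intermediate result of Theorem~\ref{thm:main+bai}, $\eps_k(G)\le \tfrac{1}{2}\bigl(\sum_{i=1}^{2k}\min\{d_i(G),k\}-\sum_{i=2k+1}^n\max\{0,d_i(G)-k\}\bigr)$, and then treats the range $k>(n-1)/2$ by passing to the complement graph via Lemma~\ref{lemma:complement_eps}, obtaining the refinement $\eps_k(G)\le\binom{k+1}{2}+\min\bigl\{\binom{n-k-1}{2},\binom{k}{2}\bigr\}$ of Corollary~\ref{cor:brouwer}. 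You instead take the \emph{minimum} of the two bounds by a case split, and dispose of the range $|V|<2k$ with the crude estimate $\sum_{v}\min\{\deg_G(v),k\}\le\min\{2|E|,\,|V|k\}$, avoiding the complement lemma entirely. Your route is shorter and more elementary, but it delivers only the stated $|E|+k^2$ bound and does not recover the stronger statements the paper extracts along the way. The bookkeeping in both of your cases checks out: in the subcase $\sum_{i=1}^{2k}d_i(G)>|E|+k^2$, that hypothesis together with $\sum_i d_i(G)=2|E|$ forces $|E|>k^2$, so your bound $\sum_{i=2k+1}^{|V|}d_i(G)<|E|-k^2$ is consistent and feeds correctly into the split of Bai's sum at index $2k$.
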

Theorem \ref{thm:weak_brouwer_improved} improves upon the bounds in Theorem \ref{thm:weak_brouwer} for all $k\ge 2$, and it may be seen as a further step towards Conjecture \ref{conj:brouwer}.

As an additional application of Theorem \ref{thm:degree_bound}, we obtain, for various families of graphs, new upper bounds on the difference between the sum of the $k$ largest Laplacian eigenvalues and the number of edges of the graph. In particular, we show that the bound in Conjecture \ref{conj:brouwer} holds for square-free graphs for all $k\ge 7$, and for graphs with girth at least $5$ for all $k\ge 1$ (see Proposition \ref{prop:apps} for details). 

An $r$-dimensional simplicial complex $X$ is called \emph{$(r+1)$-partite} if there exists a partition $V_1,\ldots,V_{r+1}$ of its vertex set $V$, such that for every $\sigma\in X$, $|\sigma\cap V_i|\le 1$ for all $1\le i\le r+1$. We call $V_1,\ldots,V_{r+1}$ the \emph{partite sets} of $X$.  For $1\le j\le r+1$, we denote by $X(r-1;j)$ the set of $(r-1)$-dimensional simplices of $X$ that do not intersect $V_j$, and for $1\le i\le |X(r-1;j)|$, we denote by 
$d^{(r)}_i(X;j)$ the $i$-th largest $r$-degree of a simplex in $X(r-1;j)$. 
For $(r+1)$-partite $r$-dimensional complexes, the following stronger version of Theorem \ref{thm:degree_bound} holds.

\begin{theorem}\label{thm:degree_bound_partite}
    Let $r\ge 1$. Let $X$ be an $(r+1)$-partite $r$-dimensional simplicial complex, with partite sets $V_1,\ldots,V_{r+1}$.
    Let $1\le k\le f_{r-1}(X)$. Then,
    \[
        \sum_{i=1}^k \lambda_i(L^+_{r-1}(X))\le \sum_{j=1}^{r+1} \sum_{i=1}^{\min\{k,|X(r-1;j)|\}} d_{i}^{(r)}(X;j).
    \]
\end{theorem}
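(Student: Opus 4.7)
The plan is to exploit the factorization $L_{r-1}^{+}(X) = D D^{\T}$, where $D \in \Rea^{f_{r-1}(X) \times f_r(X)}$ has entries $D_{\sigma,\tau} = (\tau:\sigma)$ when $\sigma \subset \tau$ and $0$ otherwise. The $(r+1)$-partite hypothesis distributes $(r-1)$-faces very evenly: every $\sigma \in X(r-1)$ has $r$ vertices lying in $r$ distinct partite sets, so exactly one $V_j$ is avoided. Hence $X(r-1;1), \ldots, X(r-1;r+1)$ is a \emph{partition} of $X(r-1)$. Writing $D^{(j)}$ for the submatrix of rows indexed by $X(r-1;j)$, the identity
\[
    D^{\T} D = \sum_{j=1}^{r+1} (D^{(j)})^{\T} D^{(j)}
\]
decomposes $D^{\T}D$ as a sum of positive semidefinite matrices.

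Applying Ky Fan's subadditive inequality $\sum_{i=1}^k \lambda_i(A+B) \le \sum_{i=1}^k \lambda_i(A) + \sum_{i=1}^k \lambda_i(B)$ for PSD matrices repeatedly, together with the standard fact that $DD^{\T}$ and $D^{\T}D$ share the same nonzero spectrum, gives
\[
    \sum_{i=1}^k \lambda_i(L_{r-1}^{+}(X)) = \sum_{i=1}^k \lambda_i(D^{\T}D) \le \sum_{j=1}^{r+1} \sum_{i=1}^k \lambda_i\bigl((D^{(j)})^{\T} D^{(j)}\bigr) = \sum_{j=1}^{r+1} \sum_{i=1}^k \lambda_i\bigl(D^{(j)}(D^{(j)})^{\T}\bigr).
\]
The remaining task is to pin down the spectrum of the $|X(r-1;j)| \times |X(r-1;j)|$ matrix $D^{(j)}(D^{(j)})^{\T}$.

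The crux of the argument, and the only step in which the partite hypothesis is genuinely used, is the claim that $D^{(j)}(D^{(j)})^{\T}$ is \emph{diagonal}, with diagonal entries $\deg_X^{(r)}(\sigma)$ for $\sigma \in X(r-1;j)$. The diagonal entry is immediate from the usual Laplacian computation. For distinct $\sigma, \sigma' \in X(r-1;j)$ the $(\sigma,\sigma')$-entry vanishes unless $|\sigma \cap \sigma'| = r-1$ and $\sigma \cup \sigma' \in X(r)$; but $\sigma$ and $\sigma'$ both avoid $V_j$, so the candidate $(r+1)$-vertex set $\sigma \cup \sigma'$ would sit inside $\bigcup_{i \ne j} V_i$, a union of only $r$ partite classes, forcing two of its vertices into the same $V_i$ and contradicting $(r+1)$-partiteness. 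With this in hand, the eigenvalues of $D^{(j)}(D^{(j)})^{\T}$ are precisely the multiset of $r$-degrees of faces in $X(r-1;j)$ (padded by zeros once $k$ exceeds $|X(r-1;j)|$), so its top-$k$ eigenvalue sum equals $\sum_{i=1}^{\min\{k,|X(r-1;j)|\}} d_i^{(r)}(X;j)$, and substituting back yields the theorem. Apart from this small combinatorial observation, every step is routine linear algebra, so I would not expect any real obstacle.
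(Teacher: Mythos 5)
Your proof is correct for the main case and follows essentially the same route as the paper: your matrices $(D^{(j)})^{\T}D^{(j)}$ are exactly the paper's $L_j$, your diagonality claim for $D^{(j)}(D^{(j)})^{\T}$ is the paper's Lemma 3.1 specialized to $A=X(r-1;j)$ (the partite condition guarantees each $r$-face contains exactly one member of $X(r-1;j)$), and the Ky Fan step is identical. The one thing you skip is the degenerate case $k>f_r(X)$, which the theorem's hypothesis $k\le f_{r-1}(X)$ permits (e.g.\ a single $r$-simplex has $f_{r-1}=r+1>1=f_r$): there $\sum_{i=1}^k\lambda_i(D^{\T}D)$ is not literally defined since $D^{\T}D$ is only $f_r(X)\times f_r(X)$, and the paper disposes of this case separately by a trace computation; your argument needs the same two-line patch.
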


As a consequence, we show that the bounds in Conjecture \ref{conj:duval_reiner} hold for the $(r-1)$-dimensional upper Laplacian of $(r+1)$-partite $r$-dimensional complexes.

\begin{corollary}\label{cor:dr_partite}
      Let $X$ be an $(r+1)$-partite $r$-dimensional simplicial complex on vertex set $V$, and let $1\le k\le f_{r-1}(X)$. Then,
   \[
    \sum_{i=1}^{k} \lambda_i(L_{r-1}^{+}(X)) \le \sum_{i=1}^k \left| \{v\in V:\, \deg^{(r)}_X(v)\ge i\}\right|.
\]
\end{corollary}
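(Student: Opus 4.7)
My plan is to reduce Corollary \ref{cor:dr_partite} to Theorem \ref{thm:degree_bound_partite} by replacing the part-by-part simplex degree sums on the right-hand side of that theorem with vertex degree sums. The mechanism is a classical bipartite double-counting inequality, applied separately in each part.

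Fix a part index $j\in\{1,\ldots,r+1\}$, and construct a bipartite graph $B_j$ with vertex classes $X(r-1;j)$ and $V_j$, placing an edge between $\sigma\in X(r-1;j)$ and $v\in V_j$ whenever $\sigma\cup\{v\}\in X(r)$. Because $X$ is $(r+1)$-partite, every $r$-simplex $\tau$ meets $V_j$ in exactly one vertex $v$, and $\tau\setminus\{v\}$ is the unique $(r-1)$-face of $\tau$ lying in $X(r-1;j)$. Hence the edges of $B_j$ are in bijection with $X(r)$, the $B_j$-degree of $\sigma\in X(r-1;j)$ equals $\deg_X^{(r)}(\sigma)$, and the $B_j$-degree of $v\in V_j$ equals $\deg_X^{(r)}(v)$.

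Now I would apply the standard observation that, in any bipartite graph, the sum of the $k$ largest degrees on one side counts certain edges and hence is at most $\sum_v \min\{k,\deg(v)\}$ over the other side. Setting $k_j=\min\{k,|X(r-1;j)|\}$, this gives
\[
\sum_{i=1}^{k_j} d_i^{(r)}(X;j) \;\le\; \sum_{v\in V_j} \min\{k,\deg_X^{(r)}(v)\} \;=\; \sum_{i=1}^{k} \left|\{v\in V_j:\,\deg_X^{(r)}(v)\ge i\}\right|,
\]
where the last equality is the usual rewriting of $\min\{k,a\}=\sum_{i=1}^k\mathbf{1}[a\ge i]$.

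Summing this inequality over $j=1,\ldots,r+1$ and using that $V$ is the disjoint union of the $V_j$'s collapses the right-hand side to $\sum_{i=1}^k|\{v\in V:\,\deg_X^{(r)}(v)\ge i\}|$, while the left-hand side is exactly the bound supplied by Theorem \ref{thm:degree_bound_partite}. Chaining the two inequalities proves the corollary. There is no real obstacle here: the only step requiring care is verifying the bijection between edges of $B_j$ and $r$-simplices of $X$, which uses the $(r+1)$-partite hypothesis in an essential way (without it, an $(r-1)$-face could admit several extensions by vertices from the same part and the bipartite picture would break down).
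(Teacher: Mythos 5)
Your proof is correct and takes essentially the same approach as the paper: the bipartite graph $B_j$ is just a repackaging of the paper's direct double count of incidences $(\sigma,\tau)$ with $\sigma$ among the top $\min\{k,|X(r-1;j)|\}$ faces of $X(r-1;j)$ versus $(v,\tau)$ with $v\in V_j$, followed by the same bound $\min\{k,\deg^{(r)}_X(v)\}$ and the identity $\min\{k,a\}=\sum_{i=1}^k\mathbf{1}[a\ge i]$ before summing over the parts.
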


The paper is organized as follows. In Section \ref{sec:prelims}, we present some preliminary results, which we will later use. In Section \ref{sec:main}, we present the proof of our main result, Theorem \ref{thm:degree_bound}, and its Corollary \ref{cor:weak_brouwer_complexes}. In addition, we present examples showing the tightness of the bound in Theorem \ref{thm:degree_bound}. Section \ref{sec:brouwer} contains the proof of Theorem \ref{thm:weak_brouwer_improved}. In Section \ref{sec:apps}, we present some additional applications of Theorem \ref{thm:degree_bound} in the graphical setting (in particular, we state and prove Proposition \ref{prop:apps} mentioned above). 
In Section \ref{sec:partite}, we prove Theorem \ref{thm:degree_bound_partite} and Corollary \ref{cor:dr_partite}, dealing with the $(r-1)$-th upper Laplacian spectrum of $(r+1)$-partite $r$-dimensional simplicial complexes. 
We conclude with Section \ref{sec:conclusion}, where we propose a stronger version of Conjecture \ref{conj:brouwer}, make some remarks on Conjecture \ref{conj:higher_brouwer}, and discuss the extent to which our results transfer to the setting of signless Laplacian matrices.

\section{Preliminaries}\label{sec:prelims}

\subsection{Eigenvalue bounds}

Recall that for a symmetric matrix $M\in\Rea^{n\times n}$ and $1\le i\le n$, we denote by $\lambda_i(M)$ the $i$-th largest eigenvalue of $M$.
We will need the following facts about the eigenvalues of symmetric matrices.

\begin{lemma}[Frobenius, Ger\v{s}gorin; see {\cite[I.1]{marshall2011inequalities}}, {\cite[Theorem 6.1.1]{horn2013matrix}}]
\label{lemma:frobenius}
    Let $M\in\Rea^{n\times n}$ be a symmetric matrix\footnote{In fact, the bound in Lemma \ref{lemma:frobenius} holds for the spectral radius of any complex square matrix, but we will not use here this more general fact.}. Then,
    \[
        \lambda_1(M)\le \max \left\{ \sum_{j=1}^n |M_{ij}| :\, 1\le i\le n\right\}
    \]
\end{lemma}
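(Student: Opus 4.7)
The plan is to give a short Gershgorin-disk style argument applied to an eigenvector realising $\lambda_1(M)$. Since $M$ is symmetric, $\lambda_1(M)$ is a real eigenvalue, and there exists a real eigenvector $v\in \Rea^n\setminus\{0\}$ with $Mv=\lambda_1(M)\,v$. After rescaling, I would assume that $\max_j |v_j|=1$, and pick an index $i\in\{1,\ldots,n\}$ achieving this maximum, so that $|v_i|=1$ and $|v_j|\le 1$ for all $j$.

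The main step is to read off the $i$-th coordinate of the eigenvalue equation. This yields
\[
    (\lambda_1(M)-M_{ii})\,v_i = \sum_{j\ne i} M_{ij} v_j.
\]
Taking absolute values, using the triangle inequality, the bound $|v_j|\le 1=|v_i|$, and dividing by $|v_i|=1$, gives
\[
    |\lambda_1(M)-M_{ii}| \le \sum_{j\ne i} |M_{ij}|,
\]
so in particular $\lambda_1(M) \le M_{ii} + \sum_{j\ne i} |M_{ij}| \le \sum_{j=1}^n |M_{ij}|$. Taking the maximum over $i$ completes the proof.

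Since this is the classical Gershgorin circle bound applied to a symmetric matrix, there is essentially no substantive obstacle; the only subtlety worth noting is that symmetry is used to guarantee that $\lambda_1(M)$ is real and has a real eigenvector, which is what allows the coordinatewise comparison above to go through cleanly.
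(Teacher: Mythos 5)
Your argument is correct: this is the standard Ger\v{s}gorin-disk proof, and every step (normalizing the eigenvector so that $|v_i|=\max_j|v_j|=1$, isolating the $i$-th coordinate of $Mv=\lambda_1(M)v$, and applying the triangle inequality) goes through exactly as you describe. The paper itself does not prove this lemma --- it cites it as a classical result of Frobenius/Ger\v{s}gorin --- so there is nothing to compare against; your proof is a complete and correct justification of the cited fact, and your remark about where symmetry is used (to get a real eigenvalue with a real eigenvector) is apt, though as the paper's footnote notes, the same disk argument bounds the spectral radius of an arbitrary complex matrix without any symmetry assumption.
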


\begin{lemma}[Ky Fan; see {\cite[Proposition A.6]{marshall2011inequalities}}]\label{lemma:ky_fan}
    Let $A,B\in\Rea^{n\times n}$ be symmetric matrices, and let $1\le k\le n$. Then,
    \[
        \sum_{i=1}^k\lambda_i(A+B)\le \sum_{i=1}^k\lambda_i(A)+\sum_{i=1}^k \lambda_i(B).
    \]
\end{lemma}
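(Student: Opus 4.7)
The plan is to prove Ky Fan's inequality via the variational (maximum) characterization of the sum of the top $k$ eigenvalues of a symmetric matrix. Specifically, I will establish the \emph{Ky Fan maximum principle}: for any symmetric $M\in\Rea^{n\times n}$ and $1\le k\le n$,
\[
    \sum_{i=1}^k \lambda_i(M)\;=\;\max_{U\in \Rea^{n\times k},\, U^{\T}U=I_k}\trace(U^{\T} M U).
\]
Once this is in hand, the inequality follows in one line: pick $U\in\Rea^{n\times k}$ with $U^{\T}U=I_k$ attaining the maximum for $A+B$, and use linearity of the trace to write
\[
    \sum_{i=1}^k \lambda_i(A+B)=\trace(U^{\T}(A+B)U)=\trace(U^{\T}AU)+\trace(U^{\T}BU),
\]
and then apply the maximum principle in the opposite direction (as an upper bound) to each of the two summands.

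To prove the maximum principle, I would use the spectral theorem to write $M=\sum_{i=1}^n \lambda_i(M)\, v_i v_i^{\T}$ for some orthonormal basis $v_1,\ldots,v_n$ of $\Rea^n$ consisting of eigenvectors of $M$. For any $U=[u_1\mid\cdots\mid u_k]$ with $U^{\T}U=I_k$, I compute
\[
    \trace(U^{\T} M U)=\sum_{j=1}^k u_j^{\T}Mu_j=\sum_{i=1}^n \lambda_i(M)\,c_i,\qquad \text{where } c_i:=\sum_{j=1}^k (v_i^{\T}u_j)^2.
\]
The key step is to verify the two constraints on the coefficients $c_i$: first, $\sum_{i=1}^n c_i=\sum_{j=1}^k \|u_j\|^2=k$ since $\{v_i\}$ is an orthonormal basis and $U$ has orthonormal columns; second, $0\le c_i\le 1$ for every $i$, where the upper bound follows by completing $u_1,\ldots,u_k$ to an orthonormal basis and applying Parseval to $v_i$. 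Maximizing $\sum_i \lambda_i(M)c_i$ subject to these constraints (a linear objective over a polytope of convex-combination type) clearly puts all unit weight on the $k$ largest eigenvalues, giving the upper bound $\sum_{i=1}^k \lambda_i(M)$; this value is attained by taking $u_j=v_j$ for $1\le j\le k$.

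The main (and only) subtlety is the verification of the constraint $c_i\le 1$, together with the sharpness of the resulting linear-programming bound. Once the maximum principle is established, the subadditivity statement in Lemma~\ref{lemma:ky_fan} is immediate from trace linearity; no further work is required, and no assumption about commutativity of $A$ and $B$ is needed. The whole argument is self-contained, relying only on the spectral theorem for real symmetric matrices.
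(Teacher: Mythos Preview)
Your argument is correct and is the standard route to Ky Fan's inequality via the maximum principle $\sum_{i=1}^k\lambda_i(M)=\max_{U^{\T}U=I_k}\trace(U^{\T}MU)$; the constraints $0\le c_i\le 1$ and $\sum_i c_i=k$ are verified exactly as you describe, and the subadditivity then follows in one line from trace linearity. Note, however, that the paper does not supply its own proof of this lemma at all: it is stated as a preliminary fact with a citation to \cite[Proposition~A.6]{marshall2011inequalities}, so there is no ``paper's proof'' to compare against. Your write-up simply fills in what the paper outsources to the reference, and it does so correctly.
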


Finally, we will need the following simple result on the Laplacian eigenvalues of disconnected graphs.

\begin{lemma}[see {\cite[Proposition 1.3.6]{brouwer2012book}}]\label{lemma:components}
    Let $G$ be a graph with connected components $G_1,\ldots, G_s$. Then, the spectrum of $L(G)$ is the union of the spectra of $L(G_i)$, for $1\le i\le s$ (where the multiplicities of the eigenvalues are added).
\end{lemma}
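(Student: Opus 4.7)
The plan is to show that, after a suitable reordering of the vertices, the matrix $L(G)$ becomes block-diagonal with the block $L(G_i)$ appearing along the diagonal for each connected component $G_i$, and then invoke the standard fact that the spectrum of a symmetric block-diagonal matrix is the (multiset) union of the spectra of its diagonal blocks.

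More concretely, I would first list the vertices of $G$ as $V = V_1 \sqcup V_2 \sqcup \cdots \sqcup V_s$, where $V_i$ is the vertex set of $G_i$, and order vertices so that all of $V_1$ comes first, then all of $V_2$, and so on. The key observation is that if $u \in V_i$ and $v \in V_j$ with $i \neq j$, then $\{u,v\} \notin E$ (otherwise $u$ and $v$ would lie in the same component), so $L(G)_{u,v} = 0$. Together with the fact that $\deg_G(v) = \deg_{G_i}(v)$ when $v \in V_i$ (again because all edges incident to $v$ lie within $G_i$) and that off-diagonal entries within a single component match the entries of $L(G_i)$ by the definition of the Laplacian, this gives
\[
    L(G) = \begin{pmatrix} L(G_1) & & \\ & \ddots & \\ & & L(G_s) \end{pmatrix}.
\]

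The second step is to verify that for a symmetric block-diagonal matrix $M = \mathrm{diag}(M_1, \ldots, M_s)$, the spectrum of $M$ (with multiplicities) equals the disjoint union of the spectra of the $M_i$. This is immediate from two directions: (i) any eigenvector $x$ of $M_i$ with eigenvalue $\mu$ extends, by padding with zeros outside the $i$-th block, to an eigenvector of $M$ with the same eigenvalue; and (ii) since the orthonormal bases of eigenvectors of the individual $M_i$ together form an orthonormal basis of $\Rea^n$, these extended vectors account for all $n$ eigenvalues of $M$.

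There is no real obstacle here; the only thing to be a little careful about is recording the multiplicities correctly, which is handled by using the basis-of-eigenvectors argument rather than just comparing characteristic polynomials. Combining the two steps yields the claim.
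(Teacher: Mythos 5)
Your proof is correct and complete: the reordering argument correctly exhibits $L(G)$ as block-diagonal with blocks $L(G_i)$, and the padded-eigenvector argument properly accounts for multiplicities. The paper itself gives no proof of this lemma, citing it to Brouwer and Haemers (Proposition 1.3.6); the block-diagonalization you describe is exactly the standard argument one finds there.
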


\subsection{Boundary matrices, upper and lower Laplacians}\label{sec:prelims:incidence}

Let $X$ be a simplicial complex on vertex set $V$, and let $<$ be a linear order on $V$. Let $0\le r\le \dim(X)$. Recall that for $\tau\in X(r)$ and $\sigma\in X(r-1)$ with $\sigma\subset \tau$, we defined $(\tau:\sigma)=(-1)^{|\{v\in\tau: v<u\}|}$, where $u$ is the unique vertex in $\tau\setminus \sigma$. We will need the following simple lemma, which appears implicitly, for example, in \cite{duval2002shifted}. For completeness, we include a proof.
\begin{lemma}\label{lemma:signs}
    Let $X$ be a simplicial complex, and let $1\le r\le \dim(X)$. Let $\tau,\eta\in X(r-1)$ such that $|\tau\cap \eta|=r-1$ and $\tau\cup\eta\in X$. Then,
    \[
        (\tau\cup\eta:\tau)(\tau\cup\eta:\eta)=-(\tau:\tau\cap \eta)(\eta:\tau\cap \eta).
    \]
\end{lemma}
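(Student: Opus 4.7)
The plan is to reduce the identity to a direct sign computation from the definition. Let $\sigma=\tau\cap\eta$, which has $r-1$ elements, and let $a$ be the unique vertex of $\tau\setminus\sigma$ and $b$ the unique vertex of $\eta\setminus\sigma$, so $\tau=\sigma\cup\{a\}$, $\eta=\sigma\cup\{b\}$, and $\tau\cup\eta=\sigma\cup\{a,b\}$ with $a\ne b$. By the symmetry of the statement in $\tau$ and $\eta$, it suffices to handle the case $a<b$.

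Now I would just unpack each of the four signs using the definition $(\mu:\nu)=(-1)^{|\{v\in\mu:\, v<u\}|}$, where $u$ is the unique vertex of $\mu\setminus\nu$. The two "lower" signs are
\[
    (\tau:\sigma)=(-1)^{|\{v\in\sigma:\, v<a\}|},\qquad (\eta:\sigma)=(-1)^{|\{v\in\sigma:\, v<b\}|},
\]
since removing $a$ (resp.\ $b$) from $\tau$ (resp.\ $\eta$) leaves $\sigma$, and $a,b$ themselves do not contribute to the count. For the "upper" signs, the vertex added to $\tau$ to form $\tau\cup\eta$ is $b$, and the vertex added to $\eta$ is $a$, so
\[
    (\tau\cup\eta:\tau)=(-1)^{|\{v\in\sigma\cup\{a\}:\, v<b\}|},\qquad (\tau\cup\eta:\eta)=(-1)^{|\{v\in\sigma\cup\{b\}:\, v<a\}|}.
\]
Using the assumption $a<b$, the first exponent equals $|\{v\in\sigma:\, v<b\}|+1$ and the second equals $|\{v\in\sigma:\, v<a\}|$.

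Multiplying, the left-hand side becomes $-(-1)^{|\{v\in\sigma:\, v<a\}|+|\{v\in\sigma:\, v<b\}|}$, which is exactly $-(\tau:\sigma)(\eta:\sigma)$, the right-hand side. This completes the verification under the assumption $a<b$, and swapping the roles of $\tau$ and $\eta$ handles the other case. There is no real obstacle here beyond careful bookkeeping of which vertex is being added or removed at each step; the only substantive observation is the extra $+1$ in the exponent of $(\tau\cup\eta:\tau)$ coming from the fact that $a\in\tau$ sits below $b$, which is precisely what produces the minus sign on the right.
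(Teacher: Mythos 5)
Your proof is correct and follows essentially the same route as the paper's: fix the two differing vertices, assume without loss of generality that one precedes the other, and compute all four incidence signs directly from the definition, with the minus sign arising from the extra vertex $a<b$ counted in the exponent of $(\tau\cup\eta:\tau)$. The only difference is cosmetic bookkeeping (the paper combines the exponents into counts over intervals $u\le v<w$), so nothing further is needed.
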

\begin{proof}
    Let $u$ be the unique vertex in $\tau\setminus\eta$,  and let $w$ be the unique vertex in $\eta\setminus \tau$. Assume without loss of generality that $u<w$. Then,
    \[
      (\tau\cup\eta:\tau)(\tau\cup\eta:\eta)= (-1)^{|\{v\in \tau\cup \eta:\, v<w\}|}   \cdot (-1)^{|\{v\in \tau\cup \eta:\, v<u\}|} = (-1)^{|\{v\in\tau\cup\eta:\, u\le v<w\}|}= -(-1)^{|v\in \tau\cap\eta:\, u<v<w|\}}.
    \]
    On the other hand,
    \[
    (\tau:\tau\cap\eta)(\eta:\tau\cap\eta)= (-1)^{|\{v\in \tau:\, v<u\}|}   \cdot (-1)^{|\{v\in \eta:\, v<w\}|} = (-1)^{|v\in \tau\cap\eta:\, u<v<w|\}}.
    \]
    So $(\tau\cup\eta:\tau)(\tau\cup\eta:\eta)=-(\tau:\tau\cap \eta)(\eta:\tau\cap \eta)$, as wanted.
\end{proof}

The \emph{$r$-dimensional boundary operator} $B_{r}(X)\in \Rea^{f_{r-1}(X)\times f_r(X)}$ is defined by
\[
B_r(X)_{\sigma,\tau}= \begin{cases}
                    (\tau:\sigma) & \text{if } \sigma\subset \tau,\\
                    0 & \text{otherwise,}
\end{cases}     
\]
for all $\sigma\in X(r-1)$ and $\tau\in X(r)$. It is easy to verify, using Lemma \ref{lemma:signs}, that $L_{r-1}^{+}(X)=B_r(X) B_r(X)^{T}$. Therefore, $L_{r-1}^{+}(X)$ is positive semi-definite. Moreover, it is well-known and not hard to check that $B_{r-1}(X) B_{r}(X)=0$, and therefore the image of $B_{r-1}(X)^T$ is contained in $\ker L_{r-1}^{+}(X)$. 
Note that, in the one-dimensional case, $B_1(G)^T$ is the incidence matrix of $G$ (associated with the orientation induced by the order $<$), and  $B_{0}(G)^{T}\in \Rea^{|V|\times 1}$ is the all-ones vector. Therefore,  $\lambda_{|V|}(L(G))=0$ (see, for example, \cite{fiedler1973algebraic}). Throughout the paper, we will occasionally use the simple fact that $\sum_{i=1}^{|V|-1}\lambda_i(L(G))=\sum_{i=1}^{|V|}\lambda_i(L(G))=\text{Trace}(L(G))=2|E|$. 

Let $X$ be a simplicial complex, and let $0\le r\le \dim(X)$. 
The \emph{$r$-dimensional lower Laplacian} of $X$ is the linear operator $L^{-}_r(X)\in \Rea^{f_r(X)\times f_r(X)}$ defined by
\[
    L_r^{-}(X)= B_r(X)^T B_r(X).
\]
It is easy to check that
\begin{equation}\label{eq:Lminus_formula}
    L_r^{-}(X)_{\tau,\eta}=\begin{cases}
        r+1 & \text{if } \tau=\eta,\\
        (\tau:\tau\cap \eta)(\eta:\tau\cap \eta) & \text{if } |\tau\cap \eta|=r,\\
        0 & \text{otherwise,}
    \end{cases}     
\end{equation}
for all $\tau,\eta\in X(r)$. The $1$-dimensional lower Laplacian of a graph $G$, $L_{1}^{-}(G)$, is sometimes called the \emph{edge Laplacian} of $G$ (see, for example, \cite{merris1994survey} and the references therein).

We will need the following well-know fact.

\begin{lemma}[see {\cite[Chapter 9, A.1.a]{marshall2011inequalities}}]\label{lemma:AAT}
    Let $A\in \Rea^{n\times m}$ and $B\in \Rea^{m\times n}$. Then, the non-zero eigenvalues of $AB$ are the same as those of $BA$ (with the same multiplicities).
\end{lemma}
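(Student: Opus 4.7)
My approach would be to establish the stronger characteristic polynomial identity
\[
\lambda^m \det(\lambda I_n - AB) = \lambda^n \det(\lambda I_m - BA),
\]
from which it follows immediately that $\det(\lambda I_n - AB)$ and $\det(\lambda I_m - BA)$ have the same non-zero roots with the same algebraic multiplicities, giving the claim.

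The key step is to introduce the $(n+m) \times (n+m)$ block matrix
\[
M(\lambda) = \begin{pmatrix} \lambda I_n & A \\ B & I_m \end{pmatrix}
\]
and evaluate $\det M(\lambda)$ in two different ways via Schur complements. Eliminating the upper-right block using the lower-right block $I_m$ (which is always invertible) gives $\det M(\lambda) = \det(\lambda I_n - AB)$. Eliminating the lower-left block using the upper-left block $\lambda I_n$, which is valid for $\lambda \neq 0$, gives $\det M(\lambda) = \lambda^n \det\!\bigl(I_m - \lambda^{-1} BA\bigr) = \lambda^{n-m}\det(\lambda I_m - BA)$. Equating the two expressions yields the desired identity for all $\lambda \neq 0$; since both sides are polynomials in $\lambda$, the identity extends to all $\lambda \in \mathbb{C}$.

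The argument is essentially routine once the block matrix has been identified, so I do not expect a real obstacle. The only minor bookkeeping point is the correct handling of the powers of $\lambda$ in the second Schur complement (one must multiply the identity $I_m - \lambda^{-1}BA$ through by $\lambda^m$ to recover a genuine characteristic polynomial of $BA$). Finally, since this lemma is invoked in the paper only for symmetric matrices of the form $B_r(X)B_r(X)^T$ and $B_r(X)^T B_r(X)$, algebraic and geometric multiplicities coincide, so no further argument is needed to pass between the two notions.
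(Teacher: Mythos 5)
Your proof is correct: the two Schur-complement evaluations of $\det\begin{pmatrix}\lambda I_n & A\\ B & I_m\end{pmatrix}$ yield the identity $\lambda^m\det(\lambda I_n-AB)=\lambda^n\det(\lambda I_m-BA)$, which gives equality of the non-zero spectra with algebraic multiplicities, and your closing remark correctly disposes of the algebraic-versus-geometric issue since the paper only applies the lemma to matrices of the form $NN^T$ and $N^TN$. The paper itself offers no proof of this lemma --- it is quoted directly from Marshall--Olkin--Arnold --- and your argument is the standard one for this classical fact, so there is nothing to reconcile.
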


As an immediate consequence, we obtain the following.

\begin{corollary}[see, for example, \cite{duval2002shifted}]\label{cor:Lminus_L}
    Let $X$ be a simplicial complex, and let $1\le r\le \dim(X)$. Then, the non-zero eigenvalues of $L_{r-1}^{+}(X)$ are the same as those of $L^-_r(X)$ (with the same multiplicities).
\end{corollary}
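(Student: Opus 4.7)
The plan is to derive the corollary as an immediate consequence of Lemma \ref{lemma:AAT}, exploiting the two factorizations of the Laplacians through the boundary operator $B_r(X)$ that have already been recorded in the excerpt. By definition one has $L_r^-(X) = B_r(X)^T B_r(X)$, and the discussion immediately preceding Lemma \ref{lemma:AAT} points out the dual factorization $L_{r-1}^+(X) = B_r(X) B_r(X)^T$. The verification of this second identity is the only piece of real content: the diagonal entries of $B_r(X) B_r(X)^T$ at $\sigma \in X(r-1)$ sum $(\tau:\sigma)^2 = 1$ over all $r$-faces $\tau$ containing $\sigma$, giving $\deg_X^{(r)}(\sigma)$, while the off-diagonal entry at $(\tau,\eta)$ with $|\tau \cap \eta| = r-1$ and $\tau \cup \eta \in X$ equals $(\tau\cup\eta:\tau)(\tau\cup\eta:\eta)$, which by Lemma \ref{lemma:signs} coincides with $-(\tau:\tau\cap\eta)(\eta:\tau\cap\eta)$; this is exactly the prescribed entry of $L_{r-1}^+(X)$.

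With both factorizations in hand, I would apply Lemma \ref{lemma:AAT} with $A = B_r(X) \in \Rea^{f_{r-1}(X) \times f_r(X)}$ and $B = B_r(X)^T \in \Rea^{f_r(X) \times f_{r-1}(X)}$, so that $AB = L_{r-1}^+(X)$ and $BA = L_r^-(X)$. The lemma then asserts exactly that these two matrices share the same non-zero spectrum with matching multiplicities, which is the claim. There is essentially no obstacle here: all the nontrivial combinatorial content has already been absorbed into the sign computation of Lemma \ref{lemma:signs}, and once the factorization identity is available, the corollary is a one-line invocation of the general linear-algebra fact about $AB$ versus $BA$.
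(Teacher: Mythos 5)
Your proof is correct and follows exactly the paper's route: the corollary is stated there as an immediate consequence of Lemma \ref{lemma:AAT} applied to the factorizations $L_{r-1}^{+}(X)=B_r(X)B_r(X)^{T}$ and $L_r^{-}(X)=B_r(X)^{T}B_r(X)$, with the first identity verified via Lemma \ref{lemma:signs} just as you do. Your write-up simply makes explicit the entrywise check that the paper leaves as ``easy to verify.''
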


\subsection{Laplacian eigenvalues of graph complements}

For a graph $G=(V,E)$, let $\overline{G}=(V,\overline{E})$, where $\overline{E}= \left\{e\subset V:\, |e|=2,\,  e\notin E\right\}$,  be the \emph{complement} of $G$. We will need the following basic fact about the Laplacian eigenvalues of the complement of a graph.

\begin{lemma}[Anderson and Morley \cite{anderson1985eigenvalues}, Kelmans; see {\cite[Theorem 3.3]{hammer1996laplacian}}]\label{lemma:complement}
    Let $G=(V,E)$ be a graph with $|V|=n$, and let $1\le i\le n-1$. Then,
    \[
        \lambda_i(L(G))= n- \lambda_{n-i}(L(\overline{G})).
    \]
\end{lemma}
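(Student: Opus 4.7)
The plan is to exploit the identity $L(G) + L(\overline{G}) = L(K_n)$, where $K_n$ denotes the complete graph on $V$. I would first verify this entry by entry: on the diagonal, $\deg_G(v) + \deg_{\overline{G}}(v) = n-1$, and for distinct $u,v$, exactly one of $\{u,v\} \in E$ or $\{u,v\} \in \overline{E}$ holds, so the off-diagonal sum is $-1$. This gives $L(G) + L(\overline{G}) = nI - J$, where $J$ is the all-ones matrix.

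Next, I would use that both $L(G)$ and $L(\overline{G})$ annihilate the all-ones vector $\mathbf{1}$ and are symmetric, so both preserve the orthogonal complement $W = \mathbf{1}^\perp$, an $(n-1)$-dimensional subspace. The remaining $n-1$ eigenvalues of each Laplacian are exactly the eigenvalues of the restricted operator. Since $J|_W = 0$, the restrictions satisfy
\[
    L(G)|_W + L(\overline{G})|_W = n\, I|_W,
\]
so $L(G)|_W$ and $nI|_W - L(\overline{G})|_W$ are the same self-adjoint operator on $W$, and in particular their spectra coincide.

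Finally, I would translate this into the stated indexing. If $\mu_1 \ge \cdots \ge \mu_{n-1}$ are the eigenvalues of $L(G)|_W$, which are exactly $\lambda_1(L(G)), \ldots, \lambda_{n-1}(L(G))$ (the eigenvalue $\lambda_n(L(G))=0$ corresponds to $\mathbf{1}$), then every eigenvector of $L(G)|_W$ with eigenvalue $\mu$ is also an eigenvector of $L(\overline{G})|_W$ with eigenvalue $n-\mu$. Hence the eigenvalues of $L(\overline{G})|_W$ are $n-\mu_1 \le n-\mu_2 \le \cdots \le n-\mu_{n-1}$; sorting these in decreasing order gives $\lambda_i(L(\overline{G})) = n - \mu_{n-i} = n - \lambda_{n-i}(L(G))$ for $1 \le i \le n-1$, and rearranging yields the stated identity.

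I do not anticipate a serious obstacle here: the only point that requires care is the inversion of the index when passing from the decreasing order on the eigenvalues of $L(G)|_W$ to the corresponding decreasing order on those of $L(\overline{G})|_W$.
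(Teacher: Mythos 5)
Your argument is correct and complete: the identity $L(G)+L(\overline{G})=nI-J$, the restriction to $W=\mathbf{1}^\perp$ where $J$ vanishes, and the careful index inversion (noting that the multiset of eigenvalues of $L(G)|_W$ is $\{\lambda_1(L(G)),\dots,\lambda_{n-1}(L(G))\}$ even when $0$ has multiplicity greater than one) together give exactly the stated identity. The paper itself offers no proof to compare against --- it cites this as a known result of Anderson--Morley and Kelmans --- and what you have written is the standard argument found in those references.
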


For a graph $G=(V,E)$ and $1\le k\le |V|$, we denote
\[
    \eps_k(G)= \left(\sum_{i=1}^k \lambda_i(L(G))\right)-|E|.
\]
As a simple consequence of Lemma \ref{lemma:complement}, we obtain the following result.

\begin{lemma}[see {\cite[Theorem 3.1]{chen2019onbrouwers}}]
\label{lemma:complement_eps}
    Let $G=(V,E)$ be a graph with $|V|=n$. Then, for all $1\le k\le n-1$,
    \[
        \eps_k(G)= \eps_{n-k-1}(\overline{G}) +nk -\binom{n}{2}.
    \]
\end{lemma}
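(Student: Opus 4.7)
The proof is a direct calculation built on Lemma \ref{lemma:complement}. The plan is to start with the definition $\eps_k(G)=\sum_{i=1}^k\lambda_i(L(G))-|E|$ and substitute $\lambda_i(L(G))=n-\lambda_{n-i}(L(\overline{G}))$ for each $1\le i\le k$ (note $k\le n-1$, so the lemma applies). After reindexing by $j=n-i$, this rewrites the top-$k$ sum of $L(G)$ as
\[
\sum_{i=1}^k \lambda_i(L(G)) = nk - \sum_{j=n-k}^{n-1}\lambda_j(L(\overline{G})).
\]

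The next step is to flip the tail sum into a head sum using the trace identity. Since $\lambda_n(L(\overline{G}))=0$, appending the index $j=n$ does not change the value, so
\[
\sum_{j=n-k}^{n-1}\lambda_j(L(\overline{G})) = \sum_{j=1}^{n}\lambda_j(L(\overline{G})) - \sum_{j=1}^{n-k-1}\lambda_j(L(\overline{G})) = 2|\overline{E}| - \sum_{j=1}^{n-k-1}\lambda_j(L(\overline{G})),
\]
where I used that the total sum of the Laplacian eigenvalues equals $\operatorname{Trace}(L(\overline{G}))=2|\overline{E}|$. Writing $|\overline{E}|=\binom{n}{2}-|E|$ and substituting, I get
\[
\sum_{i=1}^k \lambda_i(L(G)) = nk - 2\binom{n}{2} + 2|E| + \sum_{j=1}^{n-k-1}\lambda_j(L(\overline{G})).
\]

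The final step is bookkeeping. Subtracting $|E|$ from both sides and then expressing the remaining sum over $\overline{G}$ via its own $\eps$-functional, namely $\sum_{j=1}^{n-k-1}\lambda_j(L(\overline{G}))=\eps_{n-k-1}(\overline{G})+|\overline{E}|=\eps_{n-k-1}(\overline{G})+\binom{n}{2}-|E|$, yields
\[
\eps_k(G) = nk - 2\binom{n}{2} + |E| + \eps_{n-k-1}(\overline{G}) + \binom{n}{2} - |E| = \eps_{n-k-1}(\overline{G}) + nk - \binom{n}{2},
\]
as desired. There is no real obstacle here; the only thing to be careful about is ensuring that the eigenvalue index $n-i$ in Lemma \ref{lemma:complement} stays in the range $1\le n-i\le n-1$ so that the lemma is applicable, which holds precisely because $k\le n-1$.
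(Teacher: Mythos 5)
Your proof is correct and follows essentially the same route as the paper: apply Lemma \ref{lemma:complement} to each of the top $k$ eigenvalues, convert the resulting tail sum into a head sum via the trace identity $\sum_j \lambda_j(L(\overline{G}))=2|\overline{E}|$, and substitute $|\overline{E}|=\binom{n}{2}-|E|$. The bookkeeping checks out, so nothing further is needed.
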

\begin{proof}
    Let $1\le k\le n-1$. Denote the edge set of $\overline{G}$ by $\overline{E}$. By Lemma \ref{lemma:complement}, we have
    \begin{align*}
        \sum_{i=1}^{k} \lambda_i(L(G)) &= \sum_{i=1}^k  (n-\lambda_{n-i}(L(\overline{G})))
        =nk - \sum_{i=1}^{n-1} \lambda_i (L(\overline{G})) + \sum_{i=1}^{n-k-1} \lambda_i(L(\overline{G}))
        \\
        &= nk - 2|\overline{E}| + \eps_{n-k-1}(\overline{G})+|\overline{E}|= nk+ |E|-\binom{n}{2} + \eps_{n-k-1}(\overline{G}).
    \end{align*}
    So, $\eps_k(G)=\eps_{n-k-1}(\overline{G}) +nk -\binom{n}{2}$, as wanted.
\end{proof}

\section{Proof of Theorem \ref{thm:degree_bound} and Corollary \ref{cor:weak_brouwer_complexes}}\label{sec:main}

In this section, we prove our main result, Theorem \ref{thm:degree_bound}, and its Corollary \ref{cor:weak_brouwer_complexes}. 
We will need the following lemma, which we will also use later, in Section \ref{sec:partite}.

\begin{lemma}\label{lemma:LAminus}
    Let $X$ be a simplicial complex, and let $1\le r\le \dim(X)$. Let $A\subset X(r-1)$ such that every $r$-dimensional face of $X$ contains at most one element of $A$. Let $L_A\in \Rea^{f_r(X)\times f_r(X)}$ be defined by
    \[
    (L_A)_{\tau,\eta}=\begin{cases}
        1 & \text{if } \tau=\eta,\, \sigma\subset \tau \text{ for some } \sigma\in A,\\
        (\tau:\tau\cap \eta)(\eta:\tau\cap \eta) & \text{if } |\tau\cap \eta|=r,\, \tau\cap \eta\in A,\\
        0 & \text{otherwise,}
    \end{cases}
    \]
    for all $\tau,\eta\in X(r)$. Then, the multi-set of non-zero eigenvalues of $L_A$ is exactly
    \[
\left\{ \deg_X^{(r)}(\sigma):\, \sigma\in A \text{ such that } \deg_X^{(r)}(\sigma)>0\right\}.
    \]
\end{lemma}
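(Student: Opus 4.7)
The plan is to realize $L_A$ as a Gram-type product $B_A^T B_A$, where $B_A$ is obtained from the boundary matrix $B_r(X)$ by deleting the rows corresponding to $(r-1)$-faces outside of $A$, and then to use Lemma \ref{lemma:AAT} to transfer the eigenvalue problem to the (much simpler) matrix $B_A B_A^T$.

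More precisely, let $B_A \in \Rea^{|A| \times f_r(X)}$ be defined by $(B_A)_{\sigma,\tau} = (\tau:\sigma)$ when $\sigma \in A$ and $\sigma \subset \tau$, and $(B_A)_{\sigma,\tau}=0$ otherwise. The first step is to verify, by a direct entrywise computation, that $L_A = B_A^T B_A$. On the diagonal, $(B_A^T B_A)_{\tau,\tau} = |\{\sigma \in A : \sigma \subset \tau\}|$, which equals $1$ exactly when some element of $A$ sits below $\tau$ and is $0$ otherwise, since by hypothesis at most one $\sigma \in A$ is contained in any given $\tau \in X(r)$. For $\tau \neq \eta$, the sum $(B_A^T B_A)_{\tau,\eta} = \sum_{\sigma \in A,\, \sigma \subset \tau \cap \eta}(\tau:\sigma)(\eta:\sigma)$ vanishes unless $|\tau \cap \eta|=r$ and $\tau \cap \eta \in A$, in which case it equals $(\tau:\tau\cap\eta)(\eta:\tau\cap\eta)$. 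This matches the definition of $L_A$.

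The second step is to compute $B_A B_A^T \in \Rea^{|A|\times|A|}$. The diagonal entry indexed by $\sigma \in A$ is $\sum_{\tau \in X(r),\,\sigma \subset \tau}(\tau:\sigma)^2 = \deg_X^{(r)}(\sigma)$. For $\sigma \neq \sigma'$ both in $A$, the off-diagonal entry involves a sum over $r$-faces $\tau$ containing both $\sigma$ and $\sigma'$; but by the hypothesis that no $\tau \in X(r)$ contains two distinct elements of $A$, this sum is empty and the entry is $0$. Hence $B_A B_A^T$ is the diagonal matrix whose entries are $\deg_X^{(r)}(\sigma)$ for $\sigma \in A$, so its eigenvalues are exactly the multiset $\{\deg_X^{(r)}(\sigma) : \sigma \in A\}$.

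Finally, Lemma \ref{lemma:AAT} ensures that $L_A = B_A^T B_A$ and $B_A B_A^T$ have the same non-zero spectrum (with multiplicities), which yields the desired identification of the non-zero eigenvalues of $L_A$ with $\{\deg_X^{(r)}(\sigma) : \sigma \in A,\, \deg_X^{(r)}(\sigma) > 0\}$. There is no real obstacle here: the only delicate point is the sign bookkeeping in verifying $L_A = B_A^T B_A$ on off-diagonal entries, but this is immediate from the definition of $B_A$ once one uses the assumption on $A$ to eliminate all cross terms.
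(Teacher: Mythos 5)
Your proof is correct and is essentially identical to the paper's: both realize $L_A$ as $B^TB$ for the row-restricted boundary matrix $B$ indexed by $A\times X(r)$, observe that $BB^T$ is diagonal with entries $\deg_X^{(r)}(\sigma)$ thanks to the hypothesis that no $r$-face contains two elements of $A$, and conclude via Lemma \ref{lemma:AAT}. No issues.
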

\begin{proof}
    Let $B\in \Rea^{|A|\times f_r(X)}$ be defined by
    \[
        B_{\sigma,\tau}=\begin{cases}
            (\tau:\sigma) & \text{if } \sigma\subset \tau,\\
            0 & \text{otherwise,}
        \end{cases}
    \]
    for all $\sigma\in A$ and $\tau\in X(r)$.
    It is easy to check, using the fact that every simplex in $X(r)$ contains at most one element of $A$, that
    \[
        L_{A}= B^T B.
    \]
    On the other hand, it is easy to check (again using the fact that every $\tau\in X(r)$ contains at most one simplex in $A$) that $B B^T\in \Rea^{|A|\times |A|}$ is a diagonal matrix with diagonal elements $\deg_X^{(r)}(\sigma)$, for $\sigma\in A$. In particular, the eigenvalues of $B B^{T}$ are exactly $\{\deg_X^{(r)}(\sigma):\, \sigma\in A\}$. Therefore, the claim follows from Lemma \ref{lemma:AAT}.
\end{proof}

\begin{proof}[Proof of Theorem \ref{thm:degree_bound}]

Let $X$ be a simplicial complex. Let $1\le r\le \dim(X)$, and let $1\le k\le f_{r-1}(X)/(r+1)$.
First, assume that $f_r(X)<k$. By Corollary \ref{cor:Lminus_L}, $L_{r-1}^+(X)$ has at most $f_r(X)<k$ non-zero eigenvalues, and therefore
\[
\sum_{i=1}^k \lambda_i(L^+_{r-1}(X))= \sum_{i=1}^{f_{r-1}(X)}\lambda_i(L_{r-1}^+(X)) =\text{Trace}(L_{r-1}^{+}(X))= \sum_{\sigma\in X(r-1)} \deg_X^{(r)}(\sigma).
\]
On the other hand, since every $\tau\in X(r)$ contains $r+1$ $(r-1)$-dimensional faces, the number of $(r-1)$-dimensional simplices in $X$ with non-zero $r$-degree is at most $(r+1)f_r(X)<(r+1)k$. Therefore, $\sum_{\sigma\in X(r-1)} \deg_X^{(r)}(\sigma)= \sum_{i=1}^{(r+1)k} d_i^{(r)}(X)$, as wanted. 

So, we may assume $k\le f_r(X)$. 
We order the $(r-1)$-dimensional faces of $X$ as $\sigma_1,\ldots,\sigma_{f_{r-1}(X)}$, where $\deg_X^{(r)}(\sigma_i)=d^{(r)}_i(X)$ for all $1\le i\le f_{r-1}(X)$. 
For $1\le i \le (r+1)k$, let $L_i\in \Rea^{f_r(X)\times f_r(X)}$ be defined by
\begin{equation}\label{eq:L_i_def}
    (L_i)_{\tau,\eta}=\begin{cases}
        1 & \text{if } \tau=\eta,\, \sigma_i\subset \eta,\\
        (\tau:\tau\cap\eta)(\eta:\tau\cap \eta) & \text{if } \tau\cap \eta=\sigma_i,
        \\  0 &\text{otherwise,}
    \end{cases}     
\end{equation}
for all $\tau,\eta\in X(r)$. 

\begin{claim}\label{claim:Li}
    Let $1\le i\le (r+1)k$. Then,
    \[
        \sum_{j=1}^k \lambda_j(L_i) = d_i^{(r)}(X).
    \]
\end{claim}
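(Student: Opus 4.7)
The plan is to recognize that $L_i$ is exactly the matrix $L_A$ from Lemma \ref{lemma:LAminus} in the special case $A = \{\sigma_i\}$. Indeed, comparing the definition \eqref{eq:L_i_def} of $L_i$ with that of $L_A$, the two definitions coincide when $A = \{\sigma_i\}$ (the singleton $A$ trivially satisfies the hypothesis that every $r$-face of $X$ contains at most one element of $A$).

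Applying Lemma \ref{lemma:LAminus}, the multi-set of non-zero eigenvalues of $L_i$ is exactly $\{\deg_X^{(r)}(\sigma_i)\}$ if $\deg_X^{(r)}(\sigma_i) > 0$, and is empty otherwise. By our ordering convention $\deg_X^{(r)}(\sigma_i) = d_i^{(r)}(X)$, so $L_i$ has at most a single non-zero eigenvalue, equal to $d_i^{(r)}(X)$. Since $L_i = B^T B$ for the boundary-type matrix $B$ used in the proof of Lemma \ref{lemma:LAminus}, it is positive semi-definite, and hence all of its eigenvalues are non-negative.

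Therefore $\lambda_1(L_i) = d_i^{(r)}(X)$ and $\lambda_j(L_i) = 0$ for $j \ge 2$ (with the convention that $\lambda_1(L_i) = 0$ as well in the degenerate case $d_i^{(r)}(X) = 0$). Recalling that $k \le f_r(X)$, so that the first $k$ eigenvalues are well defined, we conclude
\[
\sum_{j=1}^k \lambda_j(L_i) = \lambda_1(L_i) + 0 + \cdots + 0 = d_i^{(r)}(X),
\]
as claimed. The only subtle point is verifying that $L_i$ fits the template of Lemma \ref{lemma:LAminus}, which is immediate from the definitions; there is no real obstacle here, as the claim is essentially a bookkeeping corollary of Lemma \ref{lemma:LAminus}.
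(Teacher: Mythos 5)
Your proof is correct and follows the same route as the paper: both identify $L_i$ as the matrix $L_A$ of Lemma \ref{lemma:LAminus} with $A=\{\sigma_i\}$ and read off that $d_i^{(r)}(X)$ is the only possible non-zero eigenvalue. Your extra care with the degenerate case $d_i^{(r)}(X)=0$ and with positive semi-definiteness (so the non-zero eigenvalue, if any, is indeed $\lambda_1$) is a slight tightening of the paper's one-line argument, not a different method.
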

\begin{proof}
By Lemma \ref{lemma:LAminus}, $d^{(r)}_i(X)$ is an eigenvalue of $L_i$ with multiplicity one, and all other eigenvalues are equal to $0$. Therefore, $  \sum_{j=1}^k \lambda_j(L_i) = d_i^{(r)}(X)$.
\end{proof}

Let $d= d^{(r)}_{(r+1)k}(X)$, and let
\begin{equation}\label{eq:L'_def}
    L' = L_r^{-}(X)- \sum_{i=1}^{(r+1)k} \left(1-\frac{d}{d^{(r)}_i(X)}\right)L_i.
\end{equation}
For $\sigma\in X(r-1)$, let $w(\sigma)=\min\{d/\deg^{(r)}_X(\sigma),1\}$. Note that $w(\sigma_i)=d/d_i^{(r)}(X)$ for $1\le i\le (r+1)k$, and $w(\sigma_i)=1$ for $(r+1)k\le i\le f_{r-1}(X)$. 
We will need the following facts about the matrix~$L'$.
\begin{claim}\label{claim:L'1}
    For $\tau,\eta\in X(r)$, we have
    \[
    L'_{\tau,\eta}=\begin{dcases}
    \sum_{\substack{\sigma\in X(r-1),\\ \sigma\subset \tau}} w(\sigma) & \text{if } \tau=\eta,\\
    w(\tau\cap \eta) (\tau:\tau\cap \eta)(\eta:\tau\cap \eta) & \text{if } |\tau\cap \eta|=r,\\
    0 & \text{otherwise.}
    \end{dcases}
    \]
\end{claim}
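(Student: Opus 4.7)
The plan is to verify the claim by a direct case analysis on $(\tau,\eta) \in X(r) \times X(r)$, using the defining equation \eqref{eq:L'_def} together with formula \eqref{eq:Lminus_formula} for $L_r^-(X)$ and the definition \eqref{eq:L_i_def} of each $L_i$. Before starting the casework, I would record the key observation that by the ordering $d_1^{(r)}(X) \ge \cdots \ge d_{f_{r-1}(X)}^{(r)}(X)$ and the choice $d = d^{(r)}_{(r+1)k}(X)$, the weight function simplifies to $w(\sigma_i) = d/d^{(r)}_i(X)$ for $i \le (r+1)k$ (since then $d^{(r)}_i(X) \ge d$) and $w(\sigma_i) = 1$ for $i > (r+1)k$. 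This identification is what ties the coefficients $(1 - d/d^{(r)}_i(X))$ appearing in \eqref{eq:L'_def} to the weights on the right-hand side of the claim.

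For the diagonal case $\tau = \eta$, note that $(L_r^-(X))_{\tau,\tau} = r+1 = \sum_{\sigma \in X(r-1),\, \sigma \subset \tau} 1$, while $(L_i)_{\tau,\tau} = 1$ precisely when $\sigma_i \subset \tau$. Subtracting contributions from those $\sigma_i \subset \tau$ with $i \le (r+1)k$ turns each such $1$ into $d/d^{(r)}_i(X) = w(\sigma_i)$, while the faces $\sigma_i \subset \tau$ with $i > (r+1)k$ contribute $1 = w(\sigma_i)$ unchanged; summing over the $r+1$ faces of $\tau$ matches $\sum_{\sigma \subset \tau} w(\sigma)$ term by term.

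For the off-diagonal case $|\tau \cap \eta| = r$, there is a unique $i$ with $\tau \cap \eta = \sigma_i$, so at most one of the $L_j$ contributes, and $(L_r^-(X))_{\tau,\eta} = (\tau:\sigma_i)(\eta:\sigma_i)$. If $i \le (r+1)k$, subtracting $(1 - d/d^{(r)}_i(X))(\tau:\sigma_i)(\eta:\sigma_i)$ leaves exactly $w(\sigma_i)(\tau:\sigma_i)(\eta:\sigma_i)$; if $i > (r+1)k$, no $L_j$ contributes and the entry equals $(\tau:\sigma_i)(\eta:\sigma_i) = w(\sigma_i)(\tau:\sigma_i)(\eta:\sigma_i)$. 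The case $|\tau \cap \eta| < r$ is immediate, as every matrix on the right-hand side of \eqref{eq:L'_def} vanishes at $(\tau,\eta)$.

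The main (mild) obstacle is bookkeeping: one must keep the two ranges $i \le (r+1)k$ versus $i > (r+1)k$ straight and use the correct identification of $w(\sigma_i)$ in each range, and similarly split the $r+1$ subfaces of $\tau$ in the diagonal case along these ranges. Beyond that, the claim reduces to a term-by-term algebraic verification with no analytic content.
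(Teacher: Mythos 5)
Your proof is correct and follows essentially the same route as the paper: a direct entry-by-entry comparison of \eqref{eq:L'_def} with \eqref{eq:Lminus_formula} and \eqref{eq:L_i_def}, using the observation that $w(\sigma_i)=d/d_i^{(r)}(X)$ for $i\le (r+1)k$ and $w(\sigma_i)=1$ otherwise, and splitting the off-diagonal case according to whether $\tau\cap\eta$ is one of $\sigma_1,\ldots,\sigma_{(r+1)k}$. No substantive differences from the paper's argument.
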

\begin{proof}
First, note that, for $\tau\in X(r)$, we have, by \eqref{eq:L'_def}, \eqref{eq:Lminus_formula}, and \eqref{eq:L_i_def}, 
\begin{align*}
   L'_{\tau,\tau} &=
       r+1 - \sum_{\substack{i\in\{1,\ldots,(r+1)k\},\\\sigma_i\subset \tau}}\left(1-\frac{d}{d_i^{(r)}(X)}\right)
      \\& = \sum_{\substack{\sigma\in X(r-1),\\ \sigma\subset \tau}} 1 -\sum_{\substack{i\in\{1,\ldots,(r+1)k\},\\\sigma_i\subset \tau}}\left(1-\frac{d}{d_i^{(r)}(X)}\right) = \sum_{\substack{\sigma\in X(r-1),\\ \sigma\subset \tau}} w(\sigma).
\end{align*}
Now, let $\tau,\eta\in X(r)$ such that $\tau\ne \eta$. If $|\tau\cap \eta|<r$,  we have, by \eqref{eq:L'_def}, \eqref{eq:Lminus_formula}, and \eqref{eq:L_i_def},  $L'_{\tau,\eta}=0$. If $|\tau\cap \eta|=r$, we divide into two cases. If $\tau\cap \eta\notin \{\sigma_1,\ldots,\sigma_{(r+1)k}\}$, then
\[
L'_{\tau,\eta}= (\tau:\tau\cap \eta)(\eta:\tau\cap \eta)= w(\tau\cap \eta) (\tau:\tau\cap \eta)(\eta:\tau\cap \eta).
\]
Otherwise, $\tau\cap \eta=\sigma_i$ for some $1\le i\le (r+1)k$. Then,
\[
L'_{\tau,\eta}= (\tau:\tau\cap \eta)(\eta:\tau\cap \eta)-\left(1-\frac{d}{d_i^{(r)}(X)}\right)(\tau:\tau\cap \eta)(\eta:\tau\cap \eta) =w(\tau\cap \eta) (\tau:\tau\cap \eta)(\eta:\tau\cap \eta).
\]
\end{proof}

\begin{claim}\label{claim:L'2}
$
    \lambda_1(L')\le (r+1)d.
 $
\end{claim}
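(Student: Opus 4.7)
The plan is to apply the Frobenius/Ger\v{s}gorin bound (Lemma \ref{lemma:frobenius}) to the symmetric matrix $L'$ and show that every row-sum of absolute values is bounded by $(r+1)d$. By Claim \ref{claim:L'1}, the description of $L'$ is already explicit, so the proof should reduce to a short combinatorial calculation rather than any spectral manipulation.

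Fix an arbitrary $\tau \in X(r)$. I would first compute the off-diagonal contribution to the row-sum $\sum_{\eta \in X(r)} |L'_{\tau,\eta}|$. By Claim \ref{claim:L'1}, $L'_{\tau,\eta}$ is non-zero for $\eta \ne \tau$ only when $|\tau \cap \eta| = r$, in which case $|L'_{\tau,\eta}| = w(\tau\cap\eta)$, because $(\tau:\tau\cap\eta)(\eta:\tau\cap\eta) \in \{\pm 1\}$. I would then re-index these terms by the common facet $\sigma = \tau \cap \eta$: for each of the $r+1$ codimension-one faces $\sigma \subset \tau$, the $\eta \ne \tau$ with $\tau\cap \eta = \sigma$ are exactly the $\deg_X^{(r)}(\sigma)-1$ remaining $r$-simplices of $X$ containing $\sigma$. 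This gives
\[
\sum_{\eta \ne \tau} |L'_{\tau,\eta}| \;=\; \sum_{\substack{\sigma \in X(r-1)\\ \sigma \subset \tau}} \bigl(\deg_X^{(r)}(\sigma)-1\bigr)\, w(\sigma).
\]

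Adding the diagonal term $L'_{\tau,\tau} = \sum_{\sigma \subset \tau} w(\sigma)$ from Claim \ref{claim:L'1}, the total row-sum telescopes to
\[
\sum_{\eta \in X(r)} |L'_{\tau,\eta}| \;=\; \sum_{\substack{\sigma \in X(r-1)\\ \sigma \subset \tau}} \deg_X^{(r)}(\sigma)\, w(\sigma).
\]
At this point the key input is the definition $w(\sigma) = \min\{d/\deg_X^{(r)}(\sigma),1\}$, which yields $\deg_X^{(r)}(\sigma)\, w(\sigma) = \min\{d, \deg_X^{(r)}(\sigma)\} \le d$ for every $\sigma$. Since $\tau$ has exactly $r+1$ codimension-one faces, the row-sum is at most $(r+1)d$. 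Applying Lemma \ref{lemma:frobenius} to the symmetric matrix $L'$, which is legitimate since the bound holds uniformly over $\tau$, gives $\lambda_1(L') \le (r+1)d$.

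I do not expect any genuine obstacle: the construction of $L'$ was engineered precisely so that the weight $w(\sigma)$ cancels the degree $\deg_X^{(r)}(\sigma)$ in the Ger\v{s}gorin estimate. The only point to be a bit careful with is the sign bookkeeping for the off-diagonal entries, which is already handled because we pass to absolute values and $(\tau:\sigma)(\eta:\sigma) = \pm 1$.
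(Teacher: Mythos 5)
Your proof is correct and follows exactly the paper's argument: compute the row-sums of $|L'|$ via Claim \ref{claim:L'1}, re-index the off-diagonal terms by the common facet $\sigma\subset\tau$ to get $\sum_{\sigma\subset\tau}\deg_X^{(r)}(\sigma)w(\sigma)$, bound each term by $d$ using the definition of $w$, and apply Lemma \ref{lemma:frobenius}. No differences worth noting.
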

\begin{proof}
    Let $\tau\in X(r)$. By Claim \ref{claim:L'1},
    \begin{align*}
    \sum_{\eta\in X(r)} |L'_{\tau,\eta}| &= \sum_{\substack{\sigma\in X(r-1),\\ \sigma\subset \tau}} w(\sigma) + \sum_{\substack{\eta\in X(r),\\ |\tau\cap\eta|=r}} w(\tau\cap \eta)
   = \sum_{\substack{\sigma\in X(r-1),\\ \sigma\subset \tau}} w(\sigma) +\sum_{\substack{\sigma\in X(r-1),\\ \sigma\subset \tau}} \sum_{\substack{\eta\in X(r),\\ \tau\cap\eta=\sigma}} w(\sigma)
\\ & = \sum_{\substack{\sigma\in X(r-1),\\ \sigma\subset \tau}} w(\sigma) +\sum_{\substack{\sigma\in X(r-1),\\ \sigma\subset \tau}} \left(\deg_X^{(r)}(\sigma)-1\right)\cdot w(\sigma) =\sum_{\substack{\sigma\in X(r-1),\\ \sigma\subset \tau}} \deg_X^{(r)}(\sigma)\cdot w(\sigma)
\\ & \le \sum_{\substack{\sigma\in X(r-1),\\ \sigma\subset \tau}} d = (r+1)d,
    \end{align*}
where the last inequality follows from the definition of $w(\sigma)$. By Lemma \ref{lemma:frobenius}, 
\[
\lambda_1(L')\le \max_{\tau\in X(r)}\left(\sum_{\eta\in X(r)} |L'_{\tau,\eta}|\right)\le  (r+1)d.
\]
\end{proof}

Finally, by Corollary \ref{cor:Lminus_L}, Lemma \ref{lemma:ky_fan}, Claim \ref{claim:L'2} and Claim \ref{claim:Li}, we have
\begin{align*}
   \sum_{i=1}^k \lambda_i(L^+_{r-1}(X)) &= \sum_{i=1}^k \lambda_i(L^-_{r}(X)) \le \sum_{i=1}^k \lambda_i(L') + \sum_{i=1}^{(r+1)k}
    \left(1-\frac{d}{d_i^{(r)}(X)}\right)\sum_{j=1}^k\lambda_j(L_i) 
    \\ &\le k\cdot (r+1)d + \sum_{i=1}^{(r+1)k}\left(d_i^{(r)}(X)-d\right)
     = \sum_{i=1}^{(r+1)k}d_i^{(r)}(X).
\end{align*}

\end{proof}

\begin{proof}[Proof of Corollary \ref{cor:weak_brouwer_complexes}]

We enumerate the $(r-1)$-dimensional simplices of $X$ as $\sigma_1,\ldots,\sigma_{f_{r-1}(X)}$, where $\deg_X^{(r)}(\sigma_i)=d_i^{(r)}(X)$ for all $1\le i\le f_{r-1}(X)$. Let $N=\min\{(r+1)k,f_{r-1}(X)\}$, and let $A=\{\sigma_1,\ldots,\sigma_{N}\}$.
If $(r+1)k\le f_{r-1}(X)$, then, by Theorem \ref{thm:degree_bound},
\[
    \sum_{i=1}^k \lambda_i(L^+_{r-1}(X)) \le \sum_{i=1}^{(r+1)k} \deg_X^{(r)}(\sigma_i)=\left|\left\{ (\sigma,\tau) : \, \sigma\in A,\, \tau\in X(r),\, \sigma\subset \tau\right\}\right|.
\]
If $(r+1)k>f_{r-1}(X)$, then
\[
 \sum_{i=1}^{k} \lambda_i(L^+_{r-1}(X)) 
 \le  \sum_{i=1}^{f_{r-1}(X)} \lambda_i(L^+_{r-1}(X)) 
 =\sum_{i=1}^{f_{r-1}(X)} \deg_X^{(r)}(\sigma_i)=\left|\left\{ (\sigma,\tau) : \, \sigma\in A,\, \tau\in X(r),\, \sigma\subset \tau\right\}\right|.
\]
So, in both cases,
\[
  \sum_{i=1}^k \lambda_i(L^+_{r-1}(X)) \le \left|\left\{ (\sigma,\tau) : \, \sigma\in A,\, \tau\in X(r),\, \sigma\subset \tau\right\}\right|.
\]
For $\tau\in X(r)$, let $\partial\tau=\{\sigma\in X(r-1):\, \sigma\subset\tau\}$. 
For $0\le i\le r+1$, let $c_i=\left|\left\{\tau\in X(r):\, |\partial\tau\cap A|=i\right\}\right|$. We have
\[
\left|\left\{ (\sigma,\tau) : \, \sigma\in A,\, \tau\in X(r),\, \sigma\subset \tau\right\}\right| = \sum_{i=1}^{r+1} i\cdot c_i \le f_{r}(X) + \sum_{i=2}^{r+1} (i-1)c_i. 
\]
Let $C=\{\{\sigma,\sigma'\}:\, \sigma,\sigma'\in A,\, |\sigma\cap \sigma'|=r-1\}$. Note that $|C|\le \binom{|A|}{2}\le \binom{(r+1)k}{2}$, and that if $\{\sigma,\sigma'\}\in C$, then there is at most one $\tau\in X(r)$ containing both $\sigma$ and $\sigma'$. On the other hand, for $\tau\in X(r)$, if $|\partial \tau\cap A|=i$, then there are exactly $\binom{i}{2}$ pairs $\{\sigma,\sigma'\}\in C$ such that $\sigma\subset\tau$ and $\sigma'\subset\tau$. Since $i-1\le \binom{i}{2}$ for all $i\ge 2$, we obtain
\[
 \sum_{i=1}^k \lambda_i(L^+_{r-1}(X)) \le f_{r}(X) + \sum_{i=2}^{r+1} (i-1)c_i \le f_r(X)+ \sum_{i=2}^{r+1} \binom{i}{2}c_i \le f_r(X)+ \binom{(r+1)k}{2}.
\]
\end{proof}

\subsection{Extremal examples}\label{sec:examples}

The bound in Theorem \ref{thm:degree_bound} is tight, as shown by the following example.

\begin{proposition}\label{prop:extremal}
    Let $r\ge 1$.  Let $X$ be a simplicial complex in which every $(r-1)$-dimensional simplex is contained in exactly one $r$-dimensional simplex. Then, for all $1\le k\le f_{r-1}(X)/(r+1)$,
    \[
        \sum_{i=1}^k \lambda_i(L_{r-1}^+(X)) = (r+1)k = \sum_{i=1}^{(r+1)k} d_i^{(r)}(X).
    \]
\end{proposition}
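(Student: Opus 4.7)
The plan is to exploit the hypothesis to show that the lower Laplacian $L_r^{-}(X)$ is a scalar multiple of the identity, and then use Corollary \ref{cor:Lminus_L} to transfer this information to $L_{r-1}^{+}(X)$.

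First, I would unpack the hypothesis. Since every $\sigma \in X(r-1)$ is contained in exactly one simplex of $X(r)$, we have $\deg_X^{(r)}(\sigma) = 1$ for every $\sigma \in X(r-1)$. Consequently $d_i^{(r)}(X) = 1$ for all $1 \le i \le f_{r-1}(X)$, which immediately gives the second equality $\sum_{i=1}^{(r+1)k} d_i^{(r)}(X) = (r+1)k$. Double counting incidences between $X(r-1)$ and $X(r)$ also yields $f_{r-1}(X) = (r+1) f_r(X)$, so the hypothesis $k \le f_{r-1}(X)/(r+1)$ is exactly $k \le f_r(X)$.

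Next, I would observe the structural consequence: no two distinct simplices $\tau, \eta \in X(r)$ can share an $(r-1)$-face, since such a shared face would be contained in two distinct $r$-simplices, contradicting the hypothesis. Inspecting formula \eqref{eq:Lminus_formula}, this means the off-diagonal entries of $L_r^{-}(X)$ all vanish, while the diagonal entries are all equal to $r+1$. Hence
\[
L_r^{-}(X) = (r+1)\, I_{f_r(X)}.
\]

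Finally, I would conclude by Corollary \ref{cor:Lminus_L}: the non-zero eigenvalues of $L_{r-1}^{+}(X)$ coincide, with multiplicities, with those of $L_r^{-}(X)$. Since $L_r^{-}(X) = (r+1) I$ has eigenvalue $r+1$ with multiplicity $f_r(X)$, the matrix $L_{r-1}^{+}(X)$ has eigenvalue $r+1$ with multiplicity $f_r(X)$ and eigenvalue $0$ with multiplicity $f_{r-1}(X) - f_r(X) = r f_r(X)$. Because $k \le f_r(X)$, the top $k$ eigenvalues of $L_{r-1}^{+}(X)$ are each equal to $r+1$, so $\sum_{i=1}^k \lambda_i(L_{r-1}^{+}(X)) = (r+1)k$, completing the proof. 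There is no real obstacle here; the argument is entirely structural once one notices that the hypothesis forbids adjacency of $r$-simplices along $(r-1)$-faces.
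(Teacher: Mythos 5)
Your proof is correct and follows essentially the same route as the paper: both arguments show that the hypothesis forces $L_r^{-}(X)=(r+1)I$ and then transfer the spectrum to $L_{r-1}^{+}(X)$ via Corollary \ref{cor:Lminus_L}, with the degree identity $d_i^{(r)}(X)=1$ handling the second equality. You merely spell out the "easy to check" steps (the vanishing of off-diagonal entries and the count $f_{r-1}(X)=(r+1)f_r(X)$) in more detail than the paper does.
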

\begin{proof}
    Let $X$ be a simplicial complex in which every $(r-1)$-dimensional simplex is contained in exactly one $r$-dimensional simplex. Note that $f_{r}(X) = f_{r-1}(X)/(r+1)$. 
It is easy to check, by \eqref{eq:Lminus_formula}, that $L^-_r(X)$ is the scalar matrix $(r+1)I$. 
  Hence, by Corollary \ref{cor:Lminus_L}, the unique non-zero eigenvalue of $L_{r-1}^+(X)$ is $r+1$, and its multiplicity is $f_r(X)=f_{r-1}(X)/(r+1)$. As a consequence, for all $1\le k\le f_{r-1}(X)/(r+1)$,
    \[
        \sum_{i=1}^k \lambda_i(L_{r-1}^+(X)) = (r+1)k.
    \]
    On the other hand, since every $(r-1)$-dimensional simplex has $r$-degree exactly one, we have
    \[
        \sum_{i=1}^{(r+1)k} d_i^{(r)}(X) = (r+1)k.
    \]
\end{proof}

It is easy to construct examples of simplicial complexes satisfying the property in Proposition \ref{prop:extremal}. For example, let $\sigma,\tau_1,\ldots,\tau_m$ be pairwise disjoint sets, with $|\sigma|\le r-1$ and $|\tau_i|=r+1-|\sigma|$ for all $1\le i\le m$.  Let $X$ be the simplicial complex whose maximal faces are $\sigma\cup\tau_i$, for $1\le i\le m$. 
For $r=1$, $X$ is just the graph consisting of a perfect matching with $m$ edges. It is clear that $X$ satisfies the property in Proposition \ref{prop:extremal}.

Next, we present some additional extremal examples in the case $r=1$, that is, the graphical case.
We denote by $S_n$ the star graph with $n$ vertices (that is, the graph with vertex set $[n]=\{1,2,\ldots,n\}$ and edge set $\{\{1,i\}: 2\le i\le n\}$).

\begin{proposition}
    Let $1\le k \le n/2$, and let $n_1,\ldots,n_k\ge 2$ such that $n_1+\cdots+n_k=n$. 
    Let $G=(V,E)$ be a forest on $n$ vertices with $k$ connected components, isomorphic to the star graphs $S_{n_1},\ldots,S_{n_k}$, respectively. Then,
    \[
        \sum_{i=1}^k \lambda_i(L(G))= \sum_{i=1}^{2k} d_i(G).
    \]
\end{proposition}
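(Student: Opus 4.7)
The plan is to compute both sides of the claimed identity directly and show that each equals $n$.

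For the Laplacian side, I would first record the standard spectrum of the star $S_m$ (for $m \ge 2$): the Laplacian eigenvalues are $m$ (multiplicity one), $1$ (multiplicity $m-2$), and $0$ (multiplicity one). This can be verified by exhibiting explicit eigenvectors — the all-ones vector for $0$, any vector supported on two leaves with opposite signs for $1$, and the vector $(m-1, -1, \dots, -1)^T$ (center entry first) for $m$. Applying Lemma \ref{lemma:components} to the disjoint union $G = S_{n_1} \sqcup \cdots \sqcup S_{n_k}$, the spectrum of $L(G)$ is the multiset union of the component spectra. Since each $n_j \ge 2 > 1$, the eigenvalues $n_1, \dots, n_k$ are strictly larger than every other eigenvalue appearing in this multiset, so the $k$ largest eigenvalues of $L(G)$ are exactly $n_1, \dots, n_k$. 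Thus $\sum_{i=1}^k \lambda_i(L(G)) = \sum_{j=1}^k n_j = n$.

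For the degree side, each $S_{n_j}$ contributes one center of degree $n_j - 1$ and $n_j - 1$ leaves of degree $1$, so the degree multiset of $G$ is $\{n_1 - 1, \dots, n_k - 1\}$ together with $n - k$ copies of $1$. Let $b = |\{j : n_j \ge 3\}|$ and $a = k - b = |\{j : n_j = 2\}|$. Then the center-degrees $n_j - 1$ with $n_j \ge 3$ are the only entries strictly greater than $1$, so the top $2k$ entries of the degree sequence consist of exactly those $b$ values together with $2k - b$ ones. Using $\sum_j (n_j - 1) = n - k$, the sum of the top $2k$ degrees equals $(n - k - a) + (2k - b) = n + k - (a + b) = n$. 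The condition $n = \sum_j n_j \ge 2k$ (which follows from $n_j \ge 2$) ensures that there are at least $2k$ vertices to sum over.

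The only subtlety is the case $n_j = 2$, where the "center" of $S_{n_j}$ has the same degree as a leaf and cannot be distinguished from it in the degree multiset. The accounting above sidesteps this by splitting the stars according to whether $n_j = 2$ or $n_j \ge 3$, and by counting contributions to the degree multiset rather than tracking vertices by their role. Since both sides evaluate to $n$, the identity follows.
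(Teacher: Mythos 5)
Your proof is correct and follows essentially the same route as the paper: compute the star spectra, apply the disjoint-union lemma to identify the top $k$ eigenvalues as $n_1,\dots,n_k$, and match this against the sum of the top $2k$ degrees. Your extra bookkeeping for the $n_j=2$ case is a careful (and harmless) elaboration of the same degree count the paper performs.
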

\begin{proof}
    Let $1\le i\le k$. It is well-known and easy to check (see, for example, \cite[Lemma 2]{haemers2010onthesum}) that the Laplacian eigenvalues of $L(S_{n_i})$ are $n_i$, with multiplicity $1$, $1$ with multiplicity $n_i-2$, and $0$ with multiplicity $1$. So, by Lemma \ref{lemma:components}, the $k$ largest Laplacian eigenvalues of $G$ are $n_1,\ldots,n_k$. On the other hand, the sum of the $2k$ largest degrees in $G$ is
    \[
        \sum_{i=1}^{2k}d_i(G)= (n_1-1)+\cdots+(n_k-1)+k\cdot 1= \sum_{i=1}^k n_i = \sum_{i=1}^k \lambda_i(L(G)).
    \]
\end{proof}

We could not find examples of connected graphs achieving equality in Theorem \ref{thm:degree_bound} for $k>1$, and we suspect that such graphs do not exist. However, for every $\epsilon>0$, there exist connected graphs for which the difference between the sum of the $2k$ largest degrees and the sum of $k$ largest Laplacian eigenvalues is at most $\epsilon$, as shown next. 
For $n\ge 1$, we denote by $P_n$ the path graph with $n$ vertices.

\begin{proposition}
    Let $k_0\ge 1$ and $\epsilon>0$. Then, there exists $n_0=n_0(k_0,\epsilon)$ such that, for all $n\ge n_0$ and $1\le k\le k_0$, the graph $P_n$ satisfies
    \[
       \sum_{i=1}^{2k}d_i(P_n)- \sum_{i=1}^k \lambda_i(L(P_n))<\epsilon.
    \]
\end{proposition}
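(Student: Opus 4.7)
The plan is to invoke the classical explicit formula for the Laplacian spectrum of the path graph and then directly compare the two sides of the claimed inequality.

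First, I would observe that for $n \geq 2k_0 + 2$, the degree sequence of $P_n$ consists of two endpoints of degree $1$ and $n-2$ internal vertices of degree $2$. Consequently, for every $1 \leq k \leq k_0$, the sum of the $2k$ largest degrees equals exactly $4k$.

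Next, I would invoke the well-known fact (see, e.g., \cite{brouwer2012book}) that the Laplacian eigenvalues of $P_n$ are $\mu_j = 2 - 2\cos(\pi j / n)$ for $j = 0, 1, \ldots, n-1$. Since $\cos$ is strictly decreasing on $[0,\pi]$, the sequence $(\mu_j)$ is strictly increasing in $j$, so $\lambda_i(L(P_n)) = \mu_{n-i} = 2 + 2\cos(\pi i / n)$ for $1 \leq i \leq n-1$. Substituting into the difference yields, for $n \geq 2k_0 + 2$ and $1 \leq k \leq k_0$,
\[
\sum_{i=1}^{2k} d_i(P_n) - \sum_{i=1}^k \lambda_i(L(P_n)) = 4k - \sum_{i=1}^k \bigl(2 + 2\cos(\pi i/n)\bigr) = 2 \sum_{i=1}^k \bigl(1 - \cos(\pi i/n)\bigr).
\]

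To conclude, I would bound the right-hand side from above by $2 k_0 \bigl(1 - \cos(\pi k_0 / n)\bigr)$, which tends to $0$ as $n \to \infty$; choosing $n_0 \geq 2k_0 + 2$ large enough so that $2 k_0 \bigl(1 - \cos(\pi k_0 / n_0)\bigr) < \epsilon$ then gives the desired inequality uniformly for all $1 \leq k \leq k_0$ and $n \geq n_0$. There is no substantive obstacle: once the explicit spectrum of $P_n$ is invoked, the argument is a direct computation, with the only point of care being the verification of the ordering of the eigenvalues, which follows from the monotonicity of $\cos$ on $[0,\pi]$.
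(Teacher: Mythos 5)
Your proposal is correct and follows essentially the same route as the paper: compute the sum of the $2k$ largest degrees as $4k$ for $n\ge 2k_0+2$, invoke the explicit Laplacian spectrum $2-2\cos(\pi j/n)$ of $P_n$, and let $n\to\infty$ so that the top $k$ eigenvalues each approach $4$ from below. Your version is marginally cleaner in that it rewrites the difference as $2\sum_{i=1}^k(1-\cos(\pi i/n))$ and bounds it monotonically by $2k_0(1-\cos(\pi k_0/n))$, but this is a cosmetic variation of the paper's argument, not a different approach.
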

\begin{proof}
   Let $n\ge 2k_0+2$, and let $1\le k\le k_0$. Note that $\sum_{i=1}^{2k} d_i(P_n)=2k\cdot 2= 4k$. 
   On the other hand, it is well-known (see, for example, \cite[p. 9]{brouwer2012book}) that the eigenvalues of $L(P_n)$ are exactly
   \[
     2-2\cos(\pi j/n),
   \]
   for $j=0,\ldots,n-1$. Therefore,
   \[
    \sum_{i=1}^k \lambda_i(L(P_n)) = 2k-2\sum_{i=1}^k \cos(\pi (n-i)/n).
   \]
   For fixed $i$, $\cos(\pi (n-i)/n)$ tends to $-1$ from above as $n$ goes to infinity. Therefore, there exists $n_0=n_0(k_0,\epsilon)\ge 2k_0+2$ such that, for all $1\le i\le k_0$ and $n\ge n_0$, $\cos(\pi (n-i)/n)<-1+\epsilon/(2k_0)$. So, for all $n\ge n_0$ and $1\le k\le k_0$, we obtain
   \[
    \sum_{i=1}^k \lambda_i(L(P_n))> 2k - 2k(-1+\epsilon/(2k_0))= 4k-\epsilon\cdot k/k_0\ge 4k-\epsilon =\left(\sum_{i=1}^{2k}d_i(P_n)\right)-\epsilon,
   \]
   as wanted.
\end{proof}

\section{Proof of Theorem \ref{thm:weak_brouwer_improved}}\label{sec:brouwer}

Recall that for a graph $G=(V,E)$ and $1\le k\le |V|$, we denote $\eps_k(G)=\left(\sum_{i=1}^k \lambda_i(L(G))\right)-|E|$. 
By combining our main result, Theorem \ref{thm:degree_bound}, with Bai's theorem (Theorem \ref{thm:bai}), we obtain the following bound. 

\begin{theorem}\label{thm:main+bai}
    Let $G=(V,E)$ be a graph with $|V|=n$, and let $1\le k\le (n-1)/2$. Then,
    \[
        \sum_{i=1}^k \lambda_i(L(G)) \le |E|+\frac{1}{2}\left( \sum_{i=1}^{2k} \min\{d_i(G),k\} - \sum_{i=2k+1}^n \max\{0,d_i(G)-k\}\right).
    \]
\end{theorem}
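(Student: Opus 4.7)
The plan is to obtain the stated bound as the average of two inequalities already at our disposal, followed by elementary rewriting. First, Theorem \ref{thm:degree_bound} in the graphical case $r=1$ (applicable since $2k \le n-1 < n$) gives
\[
\sum_{i=1}^k \lambda_i(L(G)) \le \sum_{i=1}^{2k} d_i(G).
\]
For the second bound, I would start from Theorem \ref{thm:bai} and rewrite its right-hand side via double counting: swapping the order of summation in $\sum_{i=1}^k d_i'(G) = \sum_{i=1}^k |\{j : d_j(G) \ge i\}|$ yields the elementary identity $\sum_{i=1}^k d_i'(G) = \sum_{j=1}^n \min\{d_j(G), k\}$, so Bai's bound becomes
\[
\sum_{i=1}^k \lambda_i(L(G)) \le \sum_{j=1}^n \min\{d_j(G), k\}.
\]

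Summing these two inequalities and dividing by $2$ gives
\[
\sum_{i=1}^k \lambda_i(L(G)) \le \frac{1}{2}\left( \sum_{i=1}^{2k} d_i(G) + \sum_{i=1}^n \min\{d_i(G), k\} \right).
\]
The remaining work is purely algebraic. I would split the sum $\sum_{i=1}^n \min\{d_i(G), k\}$ at index $2k$ and, on the tail ($i > 2k$), use the identity $\min\{d_i(G), k\} = d_i(G) - \max\{0, d_i(G) - k\}$. Combined with $\sum_{i=1}^n d_i(G) = 2|E|$, the right-hand side above simplifies exactly to
\[
|E| + \frac{1}{2}\left( \sum_{i=1}^{2k}\min\{d_i(G),k\} - \sum_{i=2k+1}^n \max\{0, d_i(G) - k\} \right),
\]
which is the desired bound.

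There is no significant obstacle: once Theorem \ref{thm:degree_bound} and Theorem \ref{thm:bai} are in hand, the proof reduces to a short averaging step together with a bookkeeping manipulation that rewrites the averaged right-hand side in terms of $|E|$ and truncated degrees. The hypothesis $k \le (n-1)/2$ ensures both that Theorem \ref{thm:degree_bound} applies (i.e.\ $2k \le n$) and that the tail sum $\sum_{i=2k+1}^n$ in the statement is nonempty, so neither input to the average is vacuous.
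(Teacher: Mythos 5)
Your proof is correct and follows essentially the same route as the paper: both average the bound $\sum_{i=1}^k \lambda_i(L(G)) \le \sum_{i=1}^{2k} d_i(G)$ from Theorem \ref{thm:degree_bound} with Bai's bound rewritten via the identity $\sum_{i=1}^k d_i'(G)=\sum_{i=1}^n\min\{d_i(G),k\}$, then do the same bookkeeping with $d_i=\min\{d_i,k\}+\max\{0,d_i-k\}$ and $\sum_i d_i=2|E|$. Your organization of the algebra (averaging first, then splitting the sum at index $2k$) is if anything slightly cleaner than the paper's.
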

\begin{proof}
    By Theorem \ref{thm:degree_bound}, and using the fact that $\sum_{i=1}^n d_i(G)=2|E|$, we obtain
\begin{equation}\label{eq:1}
    \sum_{i=1}^k \lambda_i(L(G)) \le \sum_{i=1}^{2k} d_i(G) = |E| +\frac{1}{2}\sum_{i=1}^{2k}d_i(G) -\frac{1}{2}\sum_{i=2k+1}^{n} d_i(G).
\end{equation}
For $1\le i\le n$, we have $d_i(G)=\min\{d_i(G),k\}+\max\{0,d_i(G)-k\}$. Thus, we can write \eqref{eq:1} as
\begin{align}\label{eq:1'}
    2\eps_k(G) &\le \sum_{i=1}^{2k}\min\{d_i(G),k\} +  \sum_{i=1}^{2k}\max\{0,d_i(G)-k\}  \nonumber
    \\&-\sum_{i=2k+1}^n\min \{d_i(G),k\} - \sum_{i=2k+1}^n \min\{0,d_i(G)-k\}.
\end{align}

On the other hand, by Theorem \ref{thm:bai}, and using the fact that $\sum_{i=1}^n d_i'(G)=2|E|$, we have
\begin{equation}\label{eq:2}
    \sum_{i=1}^k \lambda_i(L(G))\le \sum_{i=1}^k d_i'(G) = |E|+ \frac{1}{2} \sum_{i=1}^k d_i'(G) - \frac{1}{2}\sum_{i=k+1}^n d_i'(G).
\end{equation}
Note that
\[
    \sum_{i=1}^k d_i'(G) = \sum_{i=1}^k |\{1\le j\le n:\, d_j(G)\ge i\}| = \sum_{i=1}^n \min\{d_i(G),k\}.
\]
Similarly,
\[
    \sum_{i=k+1}^n d_i'(G) = \sum_{i=k+1}^n |\{1\le j\le n:\, d_j(G)\ge i\}| = \sum_{i=1}^n \max\{0,d_i(G)-k\}.
\]
Therefore, we may write \eqref{eq:2} as
\begin{equation}\label{eq:2'}
    2\eps_k(G)\le  \sum_{i=1}^n \min\{d_i(G),k\}- \sum_{i=1}^n \max\{0,d_i(G)-k\}.
\end{equation}
Adding \eqref{eq:1'} to \eqref{eq:2'}, we obtain
\[
    4 \eps_k(G)\le 2 \sum_{i=1}^{2k} \min\{d_i(G),k\} - 2\sum_{i=2k+1}^n \max\{0,d_i(G)-k\}.
\]
So
\[
\eps_k(G)\le \frac{1}{2}\left(\sum_{i=1}^{2k} \min\{d_i(G),k\} - \sum_{i=2k+1}^n \max\{0,d_i(G)-k\}\right),
\]
as wanted.
\end{proof}

As a consequence of Theorem \ref{thm:main+bai}, we obtain the following result.

\begin{corollary}\label{cor:brouwer}
    Let $G=(V,E)$ be a graph with $|V|=n$. Then, for all $1\le k\le n-1$,
    \[
    \sum_{i=1}^k \lambda_i(L(G))\le |E|+\binom{k+1}{2}+\min\left\{\binom{n-k-1}{2},\binom{k}{2}\right\}.
    \]
\end{corollary}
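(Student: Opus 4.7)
The plan is to split into two ranges of $k$ depending on whether $k\le (n-1)/2$ or $k>(n-1)/2$, and to use the complement identity of Lemma \ref{lemma:complement_eps} to reduce the second range to the first.

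In the first range $1\le k\le (n-1)/2$, Theorem \ref{thm:main+bai} is directly applicable. I would bound each term of $\sum_{i=1}^{2k}\min\{d_i(G),k\}$ by $k$ and drop the non-negative subtracted sum, which yields $\eps_k(G)\le \tfrac{1}{2}(2k\cdot k)=k^2$. Rewriting $k^2=\binom{k+1}{2}+\binom{k}{2}$ and noting that $n-k-1\ge k$ in this range forces $\min\{\binom{n-k-1}{2},\binom{k}{2}\}=\binom{k}{2}$, this matches the desired bound here.

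For the second range $(n-1)/2<k\le n-1$, let $j=n-k-1$, so that $0\le j<(n-1)/2$. I would apply Lemma \ref{lemma:complement_eps} to get $\eps_k(G)=\eps_j(\overline{G})+nk-\binom{n}{2}$. When $j\ge 1$, the first range applied to $\overline{G}$ with parameter $j$ yields $\eps_j(\overline{G})\le j^2$; when $j=0$ (i.e.\ $k=n-1$), the inequality $\eps_0(\overline{G})=-|\overline{E}|\le 0=j^2$ is immediate. The desired bound will then follow from the algebraic identity
\[
j^2+nk-\binom{n}{2}=\binom{k+1}{2}+\binom{j}{2},
\]
which I would verify by substituting $k=n-j-1$ and expanding both sides (this reduces to checking $n^2-nj+j^2-n-n(n-j-1)=j^2$, which is a one-line calculation). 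In this range $n-k-1<k$, so $\binom{j}{2}=\binom{n-k-1}{2}$ is the relevant minimum, completing the argument.

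The main (and only) obstacle is the routine algebraic identity at the end; all spectral ingredients are already in place via Theorem \ref{thm:main+bai} and Lemma \ref{lemma:complement_eps}, so no substantial difficulty is anticipated.
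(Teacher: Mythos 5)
Your proposal is correct and follows essentially the same route as the paper: bound the first range $k\le(n-1)/2$ via Theorem \ref{thm:main+bai} to get $\eps_k(G)\le k^2=\binom{k+1}{2}+\binom{k}{2}$, then transfer to the complement via Lemma \ref{lemma:complement_eps} for the second range. Your explicit treatment of the boundary case $j=n-k-1=0$ is a nice touch of care that the paper passes over silently, and the algebraic identity you defer does check out.
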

\begin{proof}
    First, assume $k\le (n-1)/2$. Note that, in this case, $\min\left\{\binom{n-k-1}{2},\binom{k}{2}\right\}=\binom{k}{2}$. Then, by Theorem \ref{thm:main+bai}, we have
    \[
        \eps_k(G) \le \frac{1}{2}\left( \sum_{i=1}^{2k} \min\{d_i(G),k\} - \sum_{i=2k+1}^n \max\{0,d_i(G)-k\}\right) \le \frac{1}{2} \cdot 2k \cdot k = k^2 = \binom{k+1}{2}+\binom{k}{2}.
    \]
    Now, assume that $k > (n-1)/2$. Note that, in this case,  $  \min\left\{\binom{n-k-1}{2},\binom{k}{2}\right\}=\binom{n-k-1}{2}$. By Lemma \ref{lemma:complement_eps}, we have
    \[
        \eps_k(G) = \eps_{n-k-1}(\overline{G})+ nk -\binom{n}{2}.
    \]
    Since $n-k-1\le (n-1)/2$, we have, by the first case, $\eps_{n-k-1}(\overline{G}) \le (n-k-1)^2$. Therefore,
    \[
        \eps_k(G)\le   (n-k-1)^2+ nk-\binom{n}{2} = \binom{k+1}{2} + \binom{n-k-1}{2}.
    \]
\end{proof}

Theorem \ref{thm:weak_brouwer_improved} follows immediately from Corollary \ref{cor:brouwer}.

\begin{proof}[Proof of Theorem \ref{thm:weak_brouwer_improved}]
    Let $k\ge 1$, and let $G=(V,E)$ be a graph with $|V|\ge k$. 
    If $|V|=k$, the claim is trivial, as $\sum_{i=1}^{|V|} \lambda_i(L(G)) = 2|E|\le |E|+|V|^2$.
    So, we may assume $|V|\ge k+1$. Then, by Corollary \ref{cor:brouwer}, we obtain
    \[
    \sum_{i=1}^k \lambda_i(L(G))\le |E|+\binom{k+1}{2}+\min\left\{\binom{n-k-1}{2},\binom{k}{2}\right\} \le |E|+\binom{k+1}{2}+\binom{k}{2}= |E|+k^2.
    \]
\end{proof}

\section{Additional applications}\label{sec:apps}

In this section, we present some additional applications of our main result in the graphical setting. First, we present the following simple consequence of Theorem \ref{thm:degree_bound}. 
For a graph $G=(V,E)$ and a set $S\subset V$, let $G[S]$ be the subgraph of $G$ induced by $S$ (that is, the graph on vertex set $S$ with edge set $\{e\in E:\, e\subset S\}$). We denote the edge set of $G[S]$ by $E(G[S])$.

\begin{proposition}\label{prop:2k_set}
    Let $G=(V,E)$ be a graph with $|V|=n$, and let $1\le k\le n/2$. Then,
    \[
        \sum_{i=1}^k \lambda_i(L(G))\le |E| + \max \left\{ |E(G[S])| :\, S\subset V,\, |S|=2k\right\}.
    \]
\end{proposition}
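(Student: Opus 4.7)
The plan is to derive this as a quick corollary of Theorem \ref{thm:degree_bound} (applied in the case $r=1$) combined with a standard double-counting identity relating the sum of vertex degrees in a set to the edges inside and across the cut.

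First I would invoke Theorem \ref{thm:degree_bound} with $r=1$ to get
\[
\sum_{i=1}^k \lambda_i(L(G)) \le \sum_{i=1}^{2k} d_i(G).
\]
Let $S^*\subset V$ be the set of $2k$ vertices of largest degree (breaking ties arbitrarily), so that $\sum_{i=1}^{2k} d_i(G) = \sum_{v\in S^*} \deg_G(v)$. Note that $|S^*|=2k \le n$, which is where the hypothesis $k\le n/2$ is used.

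The core of the proof is then a one-line double counting. For any $S\subset V$, each edge with both endpoints in $S$ contributes $2$ to $\sum_{v\in S}\deg_G(v)$, each edge with exactly one endpoint in $S$ contributes $1$, and edges lying entirely outside $S$ contribute $0$. Writing $E(S,V\setminus S)$ for the set of edges with exactly one endpoint in $S$, this gives
\[
\sum_{v\in S} \deg_G(v) = 2|E(G[S])| + |E(S,V\setminus S)| = |E(G[S])| + \bigl(|E| - |E(G[V\setminus S])|\bigr) \le |E| + |E(G[S])|.
\]

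Applying this identity with $S=S^*$ and then bounding $|E(G[S^*])|$ by the maximum of $|E(G[S])|$ over all $S\subset V$ with $|S|=2k$ yields the claim. There is no real obstacle here; the only point requiring attention is the initial choice of $S^*$ so that the Theorem \ref{thm:degree_bound} bound can be rewritten as a sum of degrees over a fixed $2k$-set.
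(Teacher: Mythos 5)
Your proposal is correct and is essentially the same argument as the paper's: apply Theorem \ref{thm:degree_bound} with $r=1$, take the set of $2k$ highest-degree vertices, and use the double-counting identity $\sum_{v\in S}\deg_G(v)=2|E(G[S])|+|E(S,V\setminus S)|\le |E|+|E(G[S])|$. No issues.
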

\begin{proof}
    We order the vertices of $G$ as $v_1,\ldots,v_n$, where $\deg_G(v_i)=d_i(G)$ for all $1\le i\le n$. Let $S'=\{v_1,\ldots,v_{2k}\}$.
    Note that
    \[
        \sum_{i=1}^{2k} d_i(G) = \left| \{e\in E:\, |e\cap S'|=1\}\right| + 2|E(G[S'])| \le |E|+|E(G[S'])|.
    \]
 So, by Theorem \ref{thm:degree_bound}, we obtain
 \[
    \sum_{i=1}^k \lambda_i(L(G)) \le \sum_{i=1}^{2k} d_i(G) \le |E|+|E(G[S'])| \le  |E| + \max \left\{ |E(G[S])| :\, S\subset V,\, |S|=2k\right\}.
 \]
\end{proof}

We say that a family of graphs $\mathcal{G}$ is \emph{hereditary} if for every $G\in\mathcal{G}$, all induced subgraphs of $G$ belong to $\mathcal{G}$ as well.

\begin{lemma}\label{lemma:apps}
    Let $\mathcal{G}$ be a hereditary family of graphs. Let $f:\mathbb{Z}_{\ge 0} \to \mathbb{R}_{\ge 0}$ be a monotone non-decreasing function such that $|E|\le f(|V|)$ for every $G=(V,E)\in \mathcal{G}$. Then, for all $k\ge 1$ and $G=(V,E)\in \mathcal{G}$ with $|V|\ge k$,
    \[
        \sum_{i=1}^k \lambda_i(L(G))\le |E|+f(2k).
    \]
\end{lemma}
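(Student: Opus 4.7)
The plan is to combine Proposition~\ref{prop:2k_set} with the hereditary property of $\mathcal{G}$, splitting into two cases based on the size of $|V|$ relative to $2k$.

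First I would handle the case $|V|\ge 2k$. Here Proposition~\ref{prop:2k_set} applies directly and yields
\[
    \sum_{i=1}^k \lambda_i(L(G))\le |E| + \max\{|E(G[S])|:\, S\subset V,\, |S|=2k\}.
\]
For any $S\subset V$ with $|S|=2k$, the induced subgraph $G[S]$ is again a member of $\mathcal{G}$ by hereditariness, so the hypothesis gives $|E(G[S])|\le f(|S|)=f(2k)$. Taking the maximum over such $S$ yields the desired bound.

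Next I would handle the remaining case $k\le |V|<2k$. Here Proposition~\ref{prop:2k_set} cannot be invoked, but the trace identity $\sum_{i=1}^{|V|} \lambda_i(L(G))=2|E|$ (together with the non-negativity of Laplacian eigenvalues) gives
\[
    \sum_{i=1}^k \lambda_i(L(G))\le \sum_{i=1}^{|V|}\lambda_i(L(G))=2|E|=|E|+|E|\le |E|+f(|V|)\le |E|+f(2k),
\]
where the last step uses that $f$ is monotone non-decreasing and $|V|<2k$.

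There is no real obstacle here; the argument is essentially a hereditary-transfer principle applied to Proposition~\ref{prop:2k_set}, with the small bookkeeping step of covering the ``short'' range $k\le |V|<2k$ via the trace bound.
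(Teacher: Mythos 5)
Your proof is correct and follows essentially the same route as the paper: the same split into $|V|\ge 2k$ (where Proposition~\ref{prop:2k_set} plus hereditariness gives $|E(G[S])|\le f(2k)$) and $k\le |V|<2k$ (where the trace bound $\sum_{i=1}^k\lambda_i(L(G))\le 2|E|\le |E|+f(|V|)\le |E|+f(2k)$ suffices). No issues.
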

\begin{proof}
    Let $k\ge 1$ and $G=(V,E)\in\mathcal{G}$ with $|V|\ge k$. 
    If $k>|V|/2$, the claim follows trivially, since
    \[
        \sum_{i=1}^k \lambda_i(L(G)) \le 2|E|\le |E|+ f(|V|)\le |E|+ f(2k). 
    \]
    So, we may assume $k\le |V|/2$. Let $S\subset V$. Since $\mathcal{G}$ is hereditary, we have $G[S]\in \mathcal{G}$. Therefore, $|E(G[S])|\le f(|S|)$. So, by Proposition \ref{prop:2k_set}, we obtain
    \[
         \sum_{i=1}^k \lambda_i(L(G)) \le |E|+ \max \left\{ |E(G[S])| :\, S\subset V,\, |S|=2k\right\} \le  |E|+f(2k),
    \]
    as wanted.
\end{proof}

The next result collects several immediate applications of Lemma \ref{lemma:apps}, which may be of interest.

\begin{proposition}\label{prop:apps}
   Let $k\ge 1$, and let $G=(V,E)$ be a graph with $|V|\ge k$. Then, the following results hold.
   \begin{enumerate}
       \item (Haemers, Mohammadian, Tayfeh-Rezaie {\cite[Theorem 5]{haemers2010onthesum}}\footnote{In \cite{fritscher2011onthesum}, Fritscher, Hoppen, Rocha, and Trevisan
       proved a stronger bound for trees: $\eps_k(G)\le 2k-1 -(2k-2)/|V|$.}) If $G$ is a forest, then 
       \[
       \sum_{i=1}^k \lambda_i(L(G)) \le |E|+2k-1.
       \]

      \item (See Cooper \cite{cooper2021constraints}\footnote{In \cite{cooper2021constraints}, a weaker bound $\eps_k(G)\le (2k-1)(\lfloor\Delta/2\rfloor+1)$ was proved.}) If $G$ has maximum degree at most $\Delta$, then
       \[
         \sum_{i=1}^k \lambda_i(L(G))\le |E|+ k\cdot \min\{\Delta,k\}.
       \]
       
       \item (See Cooper \cite{cooper2021constraints}\footnote{In \cite{cooper2021constraints}, a slightly weaker bound $\eps_k(G)\le 6k-3$ was proved.}) If $G$ is planar, then
       \[
        \sum_{i=1}^k \lambda_i(L(G))\le |E|+ 6k-6.
       \]
       In particular, the bound in Conjecture \ref{conj:brouwer} holds for planar graphs for all $k\ge 10$.

       \item If $G$ is square-free (that is, it has no cycles of length $4$), then
       \[
       \sum_{i=1}^k \lambda_i(L(G))\le |E|+\left\lfloor k(1+\sqrt{8k-3})/2\right\rfloor.
       \]
       In particular, the bound in Conjecture \ref{conj:brouwer} holds for square-free graphs for all $k\ge 7$.
       \item If $G$ has girth at least $5$ (that is, it has no cycles of length smaller than $5$), then
       \[
       \sum_{i=1}^k \lambda_i(L(G))\le |E|+\left\lfloor k \sqrt{2k-1}\right\rfloor.
       \]
       In particular, the bound in Conjecture \ref{conj:brouwer} holds for graphs with girth at least $5$ for all $k\ge 1$.

       \item Let $t\ge 1$. If $G$ has no path of length $t$, then
       \[
            \sum_{i=1}^k \lambda_i(L(G))\le |E|+k(t-1). 
       \]
       Similarly, for $t\ge 2$, if $G$ has no cycles of length greater than $t$, then
       \[
        \sum_{i=1}^k \lambda_i(L(G))\le |E|+\left\lfloor t(2k-1)/2\right\rfloor.
       \]
   \end{enumerate}
\end{proposition}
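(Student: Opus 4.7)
The plan is to derive each bound in Proposition \ref{prop:apps} from Lemma \ref{lemma:apps}, augmented by Theorem \ref{thm:weak_brouwer_improved} when useful. For each item, the task reduces to: (i) checking that the class of graphs in question is closed under taking induced subgraphs, and (ii) supplying a monotone non-decreasing function $f\colon \mathbb{Z}_{\ge 0}\to\mathbb{R}_{\ge 0}$ satisfying $|E|\le f(|V|)$ uniformly over the class. The proof of Lemma \ref{lemma:apps} in fact shows that $\eps_k(G)\le \max\{|E(G[S])|:|S|=2k\}$; since $|E(G[S])|$ is an integer, one may replace $f(2k)$ by $\lfloor f(2k)\rfloor$ when convenient.

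For items 1--3 the required edge bounds are standard. For forests, $|E|\le n-1$, so $f(n)=\max\{0,n-1\}$ yields $\eps_k(G)\le 2k-1$. For graphs of maximum degree at most $\Delta$, $|E|\le n\Delta/2$ gives $\eps_k(G)\le k\Delta$, and combining with the universal bound $\eps_k(G)\le k^2$ from Theorem \ref{thm:weak_brouwer_improved} produces $\eps_k(G)\le k\min\{\Delta,k\}$. For planar graphs, Euler's formula gives $|E|\le 3n-6$ for $n\ge 3$, so $f(n)=\max\{0,3n-6\}$ yields $\eps_k(G)\le 6k-6$. For items 4 and 5 I would invoke classical extremal-graph-theory results. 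Reiman's theorem (a special case of the K\H{o}v\'ari--S\'os--Tur\'an bound) gives $|E|\le n(1+\sqrt{4n-3})/4$ for square-free graphs; substituting $n=2k$ and taking the floor produces the stated bound. For girth at least $5$, each pair of vertices has at most one common neighbor (by $C_4$-freeness) and no pair forming an edge has any common neighbor (by triangle-freeness), so $\sum_v \binom{\deg_G(v)}{2}\le \binom{n}{2}-|E|$; applying Cauchy--Schwarz/convexity on the left-hand side yields $|E|\le n\sqrt{n-1}/2$, and substituting $n=2k$ gives the stated $\lfloor k\sqrt{2k-1}\rfloor$.

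For item 6, both subclasses are hereditary, and the required edge bounds are the classical Erd\H{o}s--Gallai theorems: $|E|\le (t-1)n/2$ for graphs with no path on $t$ edges, and $|E|\le t(n-1)/2$ for graphs with no cycle longer than $t$. Each plugs directly into Lemma \ref{lemma:apps} with $n=2k$. Finally, the ``in particular'' comparisons against Brouwer's bound $\binom{k+1}{2}$ are elementary inequalities in $k$ (for instance, $\lfloor k(1+\sqrt{8k-3})/2\rfloor \le \binom{k+1}{2}$ reduces to a quadratic inequality in $k$, satisfied precisely for $k\ge 7$). The main obstacle is not any single proof step---each is mechanical given Lemma \ref{lemma:apps}---but rather correctly invoking the tightest available extremal bound for each family and verifying the threshold-$k$ comparisons against Brouwer's target.
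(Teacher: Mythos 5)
Your proof follows the paper's argument essentially verbatim: each item is obtained by feeding the appropriate hereditary edge-density bound (forests, bounded degree, planarity, Reiman's $C_4$-free bound, the Moore-type girth-$5$ bound, Erd\H{o}s--Gallai) into Lemma \ref{lemma:apps} at $n=2k$, with Theorem \ref{thm:weak_brouwer_improved} supplying the $\min\{\Delta,k\}$ refinement in item 2, exactly as the paper does; the only cosmetic difference is that you re-derive the girth-$\ge 5$ bound $|E|\le n\sqrt{n-1}/2$ via the common-neighbor count where the paper cites Dutton--Brigham. One shared wrinkle worth noting: the choice $f(n)=\max\{0,3n-6\}$ does not actually satisfy $|E|\le f(|V|)$ for planar graphs on $n\le 2$ vertices (e.g.\ $K_2$), and indeed item 3 fails for $K_2$ at $k=1$ ($\lambda_1=2>|E|+0$), but this is an edge case the paper itself glosses over and it does not affect $k\ge 2$.
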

\begin{proof}
    First, note that all the properties in the claim are hereditary, so we may apply Lemma \ref{lemma:apps} in each case.
    \begin{enumerate}
        \item Since every forest on $n$ vertices has at most $n-1$ edges, we obtain, by Lemma \ref{lemma:apps},
        \[
            \eps_k(G)\le 2k-1.
        \]
        \item If $\Delta\ge k$, the bound follows immediately from Theorem \ref{thm:weak_brouwer_improved}. So, we may assume $\Delta<k$. 
        An $n$-vertex graph with maximum degree at most $\Delta$ has at most $n\Delta/2$ edges. Therefore, by Lemma \ref{lemma:apps},
        \[
            \eps_k(G)\le 2k\cdot\Delta/2= k\Delta= k \cdot \min\{\Delta,k\}.
        \]
        \item Since every $n$-vertex planar graph has at most $3n-6$ edges (see, for example, \cite[Chapter I, Theorem 16]{bollobas1998modern}), we obtain, by Lemma \ref{lemma:apps},
        \[
            \eps_k(G)\le 3\cdot2k-6=6k-6.
        \]
        For $k\ge 10$, we have $6k-6\le \binom{k+1}{2}$, and therefore the bound in Conjecture \ref{conj:brouwer} holds for planar graphs for all $k\ge 10$.
        \item It is known (see, for example, \cite[Chapter IV, Theorem 12]{bollobas1998modern}) that a square-free graph on $n$ vertices has at most $\left\lfloor n(1+\sqrt{4n-3})/4\right\rfloor$ edges. Therefore, by Lemma \ref{lemma:apps},
        \[
            \eps_k(G)\le \left\lfloor2k(1+\sqrt{4\cdot 2k-3})/4 \right\rfloor=\left\lfloor k(1+\sqrt{8k-3})/2\right\rfloor.
        \]
        Since $\left\lfloor k(1+\sqrt{8k-3})/2\right\rfloor\le \binom{k+1}{2}$ for $k\ge 7$, we obtain that square-free graphs satisfy the bound in Conjecture \ref{conj:brouwer} for all $k\ge 7$.
        \item It was shown by Dutton and Brigham in \cite[Theorem 4]{dutton1991edges} (see also \cite[Theorem 4.2]{vanlint2001course}), that an $n$-vertex graph with girth at least $5$ has at most $\left\lfloor n\sqrt{n-1}/2\right\rfloor $ edges. Therefore, by Lemma \ref{lemma:apps},
        \[
            \eps_k(G)\le \left\lfloor 2k\sqrt{2k-1}/2\right\rfloor =\left\lfloor k\sqrt{2k-1}\right\rfloor.
        \]
        Since $\left\lfloor k\sqrt{2k-1}\right\rfloor \le \binom{k+1}{2}$ for $k\ge 1$, we obtain that graphs with girth at least $5$ satisfy the bound in Conjecture \ref{conj:brouwer} for all $k\ge 1$.

        \item  Since a graph on $n$ vertices with no path of length $t$ has at most $\left\lfloor (t-1)n/2\right\rfloor$ edges (see \cite[Chapter IV, Theorem 3]{bollobas1998modern}), we obtain, by Lemma \ref{lemma:apps},
        \[
        \eps_k(G)\le \left\lfloor (t-1)\cdot 2k/2\right\rfloor = (t-1)k.
        \]
        Similarly, since an $n$-vertex graph with no cycles of length greater than $t$ has at most $\left\lfloor t(n-1)/2\right\rfloor$ edges (see \cite[Chapter IV, Theorem 4]{bollobas1998modern}), we obtain, by Lemma \ref{lemma:apps},
        \[
        \eps_k(G)\le \left\lfloor t(2k-1)/2 \right\rfloor.
        \]
    \end{enumerate}
\end{proof}

\section{Laplacian eigenvalues of $(r+1)$-partite $r$-dimensional simplicial complexes}\label{sec:partite}

In this section, we prove Theorem \ref{thm:degree_bound_partite} and Corollary \ref{cor:dr_partite}.
Let $X$ be an $(r+1)$-partite $r$-dimensional simplicial complex with partite sets $V_1,\ldots,V_{r+1}$. Recall that for $1\le j\le r+1$, we denote by $X(r-1;j)$ the set of $(r-1)$-dimensional simplices of $X$ that do not intersect $V_j$, and for $1\le i\le |X(r-1;j)|$, we denote by 
$d^{(r)}_i(X;j)$ the $i$-th largest $r$-degree of a simplex in $X(r-1;j)$.

\begin{proof}[Proof of Theorem \ref{thm:degree_bound_partite}]
Let $X$ be an $(r+1)$-partite $r$-dimensional simplicial complex, and let $1\le k\le f_{r-1}(X)$.

By Corollary \ref{cor:Lminus_L}, $L_{r-1}^+(X)$ has at most $f_r(X)$ non-zero eigenvalues. Hence, if $f_r(X)<k$, we have
\[
   \sum_{i=1}^k \lambda_i(L_{r-1}^+(X)) = \sum_{i=1}^{f_{r-1}(X)}\lambda_i(L_{r-1}^+(X)) = \text{Trace}(L_{r-1}^{+}(X))= \sum_{\sigma\in X(r-1)}\deg_{X}^{(r)}(\sigma).
\]
On the other hand, since $f_r(X)<k$ and every $r$-dimensional simplex in $X$ contains exactly one $(r-1)$-dimensional face in $X(r-1;j)$, for all $1\le j\le r+1$, each $X(r-1;j)$ contains at most $k$ simplices with positive $r$-degree. Thus,
\[
     \sum_{i=1}^k \lambda_i(L_{r-1}^+(X)) = \sum_{\sigma\in X(r-1)}\deg_{X}^{(r)}(\sigma)=\sum_{j=1}^{r+1} \sum_{i=1}^{\min\{k,|X(r-1;j)|\}} d_i^{(r)}(X;j).
\]

So, we may assume $f_r(X)\ge k$. 
For $1\le j\le r+1$, let $L_j\in \Rea^{f_{r}(X)\times f_r(X)}$ be defined by
\begin{equation}\label{eq:Lj_def}
    (L_j)_{\tau,\eta}=\begin{cases}
    1 & \text{if } \tau=\eta, \\
    (\tau:\tau\cap \eta)(\eta:\tau\cap \eta) & \text{if } |\tau\cap \eta|=r,\, \tau\cap \eta\in X(r-1;j),\\
    0 & \text{otherwise,}
    \end{cases}
\end{equation}
for all $\tau,\eta\in X(r)$. By Lemma \ref{lemma:LAminus}, the multi-set of non-zero eigenvalues of $L_j$ consists of the non-zero elements in $\{\deg_X^{(r)}(\sigma):\, \sigma\in X(r-1;j)\}$. In particular,
\[
    \sum_{i=1}^{k} \lambda_i(L_j) = \sum_{i=1}^{\min\{k,|X(r-1;j)|\}} d_i^{(r)}(X;j).
\]
It is easy to check, by \eqref{eq:Lminus_formula} and \eqref{eq:Lj_def}, that $L^-_r(X)= \sum_{j=1}^{r+1}L_j$. Therefore, by Lemma \ref{lemma:ky_fan},
    \[
        \sum_{i=1}^k \lambda_i(L^+_{r-1}(X))\le
        \sum_{j=1}^{r+1} \sum_{i=1}^{k} \lambda_i(L_j)
        =\sum_{j=1}^{r+1} \sum_{i=1}^{\min\{k,|X(r-1;j)|\}} d_{i}^{(r)}(X;j).
    \]

\end{proof}

\begin{proof}[Proof of Corollary \ref{cor:dr_partite}]

    Let $X$ be an $(r+1)$-partite $r$-dimensional simplicial complex on vertex set $V$, with partite sets $V_1,\ldots,V_{r+1}$. 
    For $1\le j\le r+1$, let $A_j$ be the set consisting of the $\min\{k,|X(r-1;j)|\}$ simplices with largest $r$-degree in $X(r-1;j)$. Then,
    \begin{align*}
  \sum_{j=1}^{r+1} \sum_{i=1}^{\min\{k,|X(r-1;j)|\}} d_{i}^{(r)}(X;j) &= \sum_{j=1}^{r+1} \left|\left\{ (\sigma,\tau) :\, \sigma\in A_j,\, \tau\in X(r),\, \sigma\subset\tau\right\}\right|
  \\ &= \sum_{j=1}^{r+1} \left|\left\{ (v,\tau) : \, 
    v\in V_j,\, \tau\in X(r) ,\, v\in \tau,\, \tau\setminus\{v\}\in A_j \right\}\right|
   \\ &\le \sum_{j=1}^{r+1} \sum_{v\in V_j} \min\{\deg^{(r)}_X(v),|A_j|\}
   = \sum_{j=1}^{r+1} \sum_{v\in V_j} \min\{\deg^{(r)}_X(v),k\}
   \\&= \sum_{v\in V} \min\{\deg^{(r)}_X(v),k\} = \sum_{i=1}^k \left|\left\{ v\in V:\, \deg_X^{(r)}(v)\ge i\right\}\right|.
   \end{align*}
   Therefore, the claim follows from Theorem \ref{thm:degree_bound_partite}.
\end{proof}

\section{Concluding remarks}\label{sec:conclusion}

Motivated by Theorem \ref{thm:main+bai}, and based on some computer experiments on small graphs, we propose the following conjecture.
\begin{conjecture}\label{conj:brouwer_plus}
Let $G=(V,E)$ be a graph, and let $1\le k\le |V|$. Then,
\[
    \sum_{i=1}^k \lambda_i(L(G)) \le |E|+\frac{k}{2}+\frac{1}{2}\sum_{i=1}^k\min\{d_i(G),k\}.
\]
\end{conjecture}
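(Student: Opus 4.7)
The plan is to attack Conjecture \ref{conj:brouwer_plus} by sharpening the hybrid argument of Theorem \ref{thm:main+bai}. A preliminary sanity check: since $\min\{d_i(G),k\} \le k$, the right-hand side is at most $|E| + k/2 + k^2/2 = |E| + \binom{k+1}{2}$, so the conjecture implies Conjecture \ref{conj:brouwer}. Consequently any proof must be at least as deep as an approach to Brouwer's conjecture, which clarifies the difficulty from the outset. As a warm-up I would verify the conjecture on the classes treated in Proposition \ref{prop:apps} (trees, bounded-degree, planar, square-free, girth-five graphs), on threshold and split graphs (where Bai's bound is tight, so one can directly compare both sides of Conjecture \ref{conj:brouwer_plus}), on regular graphs, and on the extremal family $G=K_n$ with $k=n-1$, where equality in fact holds.

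The main idea I would pursue is a refined weighted-Laplacian construction built on the proof of Theorem \ref{thm:degree_bound}. Order the vertices so that $\deg_G(v_i)=d_i(G)$, and replace the weights $w(\sigma)=\min\{d/\deg_X^{(r)}(\sigma),1\}$ of \eqref{eq:L'_def} by a new family designed to depend only on the top $k$ vertices, for example $w(v_i)=\alpha_i$ for $i\le k$ and $w(v_j)=1$ for $j>k$. Forming $L' = L_1^-(G) - \sum_{i=1}^{k}(1-w(v_i))\,L_i$ in the spirit of the proof of Theorem \ref{thm:degree_bound}, the Frobenius bound of Claim \ref{claim:L'2} translates into $\lambda_1(L') \le \max_{\{u,v\}\in E}\bigl(w(u)\deg_G(u)+w(v)\deg_G(v)\bigr)$, while Ky Fan's inequality (Lemma \ref{lemma:ky_fan}) together with Claim \ref{claim:Li} yields $\sum_{i=1}^k \lambda_i(L_1^-(G)) \le \sum_{i=1}^k (1-w(v_i))d_i(G) + k\lambda_1(L')$. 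The key step is to tune the $\alpha_i$ so that the resulting bound matches $|E|+k/2+\tfrac12\sum_{i=1}^k \min\{d_i,k\}$. A natural attempt is $\alpha_i$ depending on whether $d_i\le k$, combined, in the spirit of the proof of Theorem \ref{thm:main+bai}, with Bai's theorem applied to a residual term controlling the edges that span two low-degree vertices; the goal being to turn the $\sum_{i=1}^{2k}\min\{d_i,k\}$ appearing on the right of \eqref{eq:1'} into the sharper $\sum_{i=1}^k\min\{d_i,k\}$ at the cost only of the additive $k/2$.

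A complementary, more speculative route is to refine Bai's theorem itself, by proving a majorization statement of the form: for every graph $G$ the top-$k$ Laplacian eigenvalues are dominated not merely by $(d_1'(G),\ldots,d_k'(G))$ but by a sharper sequence depending on $(d_1(G),\ldots,d_k(G))$ and $|E|$. This would follow Bai's reduction to threshold graphs and would require an auxiliary inequality identifying, among threshold graphs with prescribed $d_1,\ldots,d_k$ and fixed edge count, the configuration maximizing $\sum_{i=1}^k \lambda_i(L)$. The hardest part of either route is precisely the intermediate regime that has kept Brouwer's conjecture open: $k$ comparable to the average degree, where neither Bai's bound nor the Anderson--Morley-type bound of Theorem \ref{thm:degree_bound} is sharp, and where the extra slack of $k/2$ in Conjecture \ref{conj:brouwer_plus} is too small to absorb the losses in the straightforward averaging of the two bounds.
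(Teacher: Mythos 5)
The statement you are trying to prove is not a theorem of the paper: it is Conjecture \ref{conj:brouwer_plus}, which the author proposes on the basis of Theorem \ref{thm:main+bai} and computer experiments on small graphs, and explicitly leaves open. There is therefore no proof in the paper to compare against, and your submission does not supply one either. What you have written is a research plan: every route you describe terminates at an unproved ``key step'' (tuning the weights $\alpha_i$ so that the Frobenius/Ky Fan bound matches the conjectured right-hand side, or refining Bai's majorization theorem), and your own closing paragraph concedes that the regime where $k$ is comparable to the average degree --- exactly the regime that has kept Brouwer's conjecture open --- is not handled. Since, as you correctly observe, the conjecture implies Conjecture \ref{conj:brouwer} (because $\frac{k}{2}+\frac{1}{2}\sum_{i=1}^k\min\{d_i(G),k\}\le \frac{k}{2}+\frac{k^2}{2}=\binom{k+1}{2}$), any complete argument along these lines would resolve Brouwer's conjecture itself; no such argument is present.

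On the positive side, your preliminary observations are sound and consistent with the paper: the implication to Brouwer's conjecture, the equality case $G=K_n$ with $k=n-1$, and the translation of Claim \ref{claim:L'2} to the graphical case ($\lambda_1(L')\le\max_{\{u,v\}\in E}\bigl(w(u)\deg_G(u)+w(v)\deg_G(v)\bigr)$) are all correct, and the proposed weighted modification of \eqref{eq:L'_def} is a faithful generalization of the construction in the proof of Theorem \ref{thm:degree_bound}. But the averaging of \eqref{eq:1'} and \eqref{eq:2'} used in Theorem \ref{thm:main+bai} provably loses too much to reach the conjectured bound (it only gives $\eps_k(G)\le\frac12\sum_{i=1}^{2k}\min\{d_i(G),k\}$, with $2k$ rather than $k$ terms and no identified mechanism to recover the difference for only an additive $\frac{k}{2}$), so the gap is not a technicality but the entire content of the conjecture. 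You should present this as a conjecture with supporting evidence, not as a proof.
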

Note that Conjecture \ref{conj:brouwer_plus} implies Brouwer's conjecture (Conjecture \ref{conj:brouwer}). 

\subsection{An extension of Brouwer's conjecture to simplicial complexes}\label{sec:conclusion:brower_for_complexes}

In Conjecture \ref{conj:higher_brouwer} we propose, motivated by the work of Abebe and Pfeffer in \cite{abebepartial,abebe2019conjectural}, that for every simplicial complex $X$, $1\le r\le \dim(X)$, and $1\le k\le f_{r-1}(X)$, the bound
\[
    \sum_{i=1}^k \lambda_i(L_{r-1}^{+}(X)) \le f_{r}(X) + \binom{k}{2}+rk
\] 
may hold. For $r=1$, this reduces to Brouwer's conjecture. This bound cannot be improved, as shown next.

First, we will need the following simple lemma. For a simplicial complex $X$ on vertex set $V$ and a set $\sigma$ disjoint from $V$, we denote by $X\ast \sigma$ the simplicial complex on vertex set $V\cup \sigma$ whose simplices are all the sets of the form $\tau\cup\eta$ for $\tau\in X$ and $\eta\subset \sigma$.

\begin{lemma}\label{lemma:coning}
  Let $X$ be an $r$-dimensional simplicial complex on vertex set $V$, and let $\sigma\ne\emptyset$ be a set disjoint from $V$. 
  Let $S$ be the multi-set consisting of all the eigenvalues of $L_{r}^-(X)$. Then, the non-zero eigenvalues of $L^{+}_{r-1+|\sigma|}(X\ast\sigma)$ are
  \[    
    \{\lambda+|\sigma| :\, \lambda\in S\}.
  \]
\end{lemma}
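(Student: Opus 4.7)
The plan is to identify $L^{-}_{r+|\sigma|}(X\ast\sigma)$ explicitly with a shift of $L^-_r(X)$ by the scalar $|\sigma|$, and then invoke Corollary \ref{cor:Lminus_L} to translate back to the upper Laplacian. To this end, I would first observe that, since $\dim(X\ast\sigma)=r+|\sigma|$, the top-dimensional faces of $X\ast\sigma$ are exactly the sets of the form $\tau\cup\sigma$ for $\tau\in X(r)$: indeed, if $\tau'\cup\eta'$ with $\tau'\in X$ and $\eta'\subset\sigma$ has dimension $r+|\sigma|$, then since $|\tau'|\le r+1$ and $|\eta'|\le|\sigma|$, one must have $|\tau'|=r+1$ and $\eta'=\sigma$. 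Hence the bijection $\tau\mapsto\tau\cup\sigma$ identifies the indexing sets of $L^-_r(X)$ and $L^-_{r+|\sigma|}(X\ast\sigma)$.

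Next I would fix a linear order on $V\cup\sigma$ that places every vertex of $\sigma$ after every vertex of $V$. With this choice, for any $\tau\in X(r)$ and any $\tau'\in X(r)$ with $|\tau\cap\tau'|=r$, the unique vertex $u$ of $\tau\setminus\tau'$ belongs to $V$, so
\[
(\tau\cup\sigma:(\tau\cap\tau')\cup\sigma)=(-1)^{|\{v\in\tau\cup\sigma:\,v<u\}|}=(-1)^{|\{v\in\tau:\,v<u\}|}=(\tau:\tau\cap\tau'),
\]
and similarly with the roles of $\tau$ and $\tau'$ swapped. Meanwhile, $(\tau\cup\sigma)\cap(\tau'\cup\sigma)=(\tau\cap\tau')\cup\sigma$ has size $r+|\sigma|$ precisely when $|\tau\cap\tau'|=r$. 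Comparing with the entry formula \eqref{eq:Lminus_formula}, the off-diagonal entries of $L^-_{r+|\sigma|}(X\ast\sigma)$ agree with those of $L^-_r(X)$ under the above bijection, while the diagonal entries are $r+|\sigma|+1=(r+1)+|\sigma|$. Thus
\[
L^-_{r+|\sigma|}(X\ast\sigma)=L^-_r(X)+|\sigma|\cdot I,
\]
and its spectrum is $\{\lambda+|\sigma|:\lambda\in S\}$.

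Finally, since $L^-_r(X)$ is positive semidefinite, every eigenvalue in $S$ is nonnegative, and because $|\sigma|\ge 1$, every element of $\{\lambda+|\sigma|:\lambda\in S\}$ is strictly positive. Applying Corollary \ref{cor:Lminus_L} (with $r$ replaced by $r+|\sigma|$ and $X$ by $X\ast\sigma$) tells us that the nonzero eigenvalues of $L^+_{r-1+|\sigma|}(X\ast\sigma)$ coincide, with multiplicity, with the nonzero eigenvalues of $L^-_{r+|\sigma|}(X\ast\sigma)$, which is exactly the desired multi-set. No step looks truly difficult; the only point requiring care is the choice of linear order that makes the sign factors $(\tau\cup\sigma:(\tau\cap\tau')\cup\sigma)$ coincide with $(\tau:\tau\cap\tau')$, since an arbitrary order would introduce a uniform but potentially nontrivial sign change that one would have to absorb into a diagonal similarity.
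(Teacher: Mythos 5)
Your proposal is correct and follows essentially the same route as the paper: choose a linear order placing the vertices of $\sigma$ after those of $V$, identify $L^-_{r+|\sigma|}(X\ast\sigma)$ with $L^-_r(X)+|\sigma|\cdot I$ via the bijection $\tau\mapsto\tau\cup\sigma$, and conclude with Corollary \ref{cor:Lminus_L}. You merely spell out the sign verification and the positivity of the shifted eigenvalues, which the paper leaves implicit.
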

\begin{proof}
    Let $<$ be a linear order of $V$. We extend it to a linear order on $V\cup\sigma$ such that $v<w$ for all $v\in V$ and $w\in \sigma$. Note that there is a bijection $\tau\mapsto \tau\cup\sigma$ between the $r$-dimensional faces of $X$ and the $(r+|\sigma|)$-dimensional faces of $X\ast \sigma$. It is easy to check, using \eqref{eq:Lminus_formula}, that, under a choice of row and column order respecting this bijection, we have
    \[
        L^-_{r+|\sigma|}(X\ast\sigma)=|\sigma|I+L^-_{r}(X),
    \]
    where $I$ is the identity matrix. Therefore, the claim follows from Corollary \ref{cor:Lminus_L}.
\end{proof}

\begin{proposition}\label{prop:extremal_brouwer_complexes}
    Let $G=(V,E)$ be a graph, and let $1\le k\le \min\{|V|,|E|\}$. Let $\eps_k(G)= \sum_{i=1}^k \lambda_i(L(G)) - |E|$. 
    Let $r\ge 2$, and let $\sigma$ be a set of size $r-1$ disjoint from $V$. Let 
    $Y=G\ast\sigma$.
    Then,
    \[
    \sum_{i=1}^{k} \lambda_i(L_{r-1}^+(Y)) = f_r(Y) + \eps_k(G) + (r-1)k.
    \]
\end{proposition}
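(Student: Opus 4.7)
The plan is to combine Lemma \ref{lemma:coning} (applied to the $1$-dimensional complex $X=G$ and the $(r-1)$-element set $\sigma$) with Corollary \ref{cor:Lminus_L}. First, I would observe that every $r$-dimensional face of $Y=G\ast\sigma$ has the form $e\cup\sigma$ for a unique $e\in E$, so $f_r(Y)=|E|$. Thus the target identity can be rewritten as $\sum_{i=1}^k \lambda_i(L_{r-1}^+(Y)) = |E|+\eps_k(G)+(r-1)k$.

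Applying Lemma \ref{lemma:coning} with $X=G$ and the chosen $\sigma$, the multi-set of non-zero eigenvalues of $L_{r-1}^+(Y) = L_{(1-1)+|\sigma|}^+(G\ast\sigma)$ equals $\{\lambda+(r-1):\lambda\in S\}$, where $S$ is the multi-set of \emph{all} $|E|$ eigenvalues of $L_1^-(G)$. Since $L_1^-(G)$ is positive semi-definite and $r-1\ge 1$, every element of this shifted multi-set is strictly positive, so these are precisely the top $|E|$ eigenvalues of $L_{r-1}^+(Y)$ (the remaining eigenvalues being $0$). By Corollary \ref{cor:Lminus_L}, the non-zero elements of $S$ coincide in multiplicity with the non-zero eigenvalues of $L(G)$, so $S$ consists of $\lambda_1(L(G))\ge\cdots\ge \lambda_{|V|-c(G)}(L(G))>0$ together with $|E|-|V|+c(G)$ zeros, where $c(G)$ denotes the number of connected components of $G$.

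Finally, I would compute $\sum_{i=1}^k \lambda_i(L_{r-1}^+(Y))$ for $1\le k\le\min\{|V|,|E|\}$ via a short case split on whether $k\le |V|-c(G)$ or $k>|V|-c(G)$. In the first case the top $k$ eigenvalues of $L_{r-1}^+(Y)$ are $\lambda_i(L(G))+(r-1)$ for $i=1,\dots,k$. In the second case one additionally picks up $k-(|V|-c(G))$ copies of $r-1$ coming from the shifted zero eigenvalues of $L_1^-(G)$; combining these with $\sum_{i=1}^{|V|}\lambda_i(L(G))=2|E|$ shows that both cases collapse to
\[
\sum_{i=1}^k \lambda_i(L_{r-1}^+(Y)) \;=\; \sum_{i=1}^k \lambda_i(L(G))+(r-1)k \;=\; |E|+\eps_k(G)+(r-1)k,
\]
which is the desired identity since $f_r(Y)=|E|$. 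The only subtle point is that $L_1^-(G)$ and $L(G)$ carry different numbers of zero eigenvalues, but because $r-1\ge 1$ shifts every extra zero of $L_1^-(G)$ into a strictly positive eigenvalue of $L_{r-1}^+(Y)$, the case analysis absorbs this bookkeeping cleanly and is the only real technicality in the argument.
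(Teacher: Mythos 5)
Your proof is correct and follows essentially the same route as the paper: apply Lemma \ref{lemma:coning} to $X=G$ to shift the spectrum of $L_1^-(G)$ by $r-1$, then use Corollary \ref{cor:Lminus_L} to replace partial sums of eigenvalues of $L_1^-(G)$ by those of $L(G)$, together with $f_r(Y)=|E|$. Your explicit case split on $k$ versus $|V|-c(G)$ just makes precise the bookkeeping of zero eigenvalues that the paper's one-line identity $\sum_{i=1}^k\lambda_i(L_1^-(G))=\sum_{i=1}^k\lambda_i(L(G))$ implicitly relies on.
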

\begin{proof}
Note that $k\le |V|\;\le f_{r-1}(Y)$.
By Lemma \ref{lemma:coning}, the $k$ largest eigenvalues of $L_{r-1}^{+}(Y)$ are
\[
    \left\{\lambda_i(L_1^-(G))+r-1 :\, 1\le i\le k\right\}.
\]
So, using Corollary \ref{cor:Lminus_L}, we obtain
\[
\sum_{i=1}^k \lambda_i(L_{r-1}^+(Y)) = k(r-1) + \sum_{i=1}^k \lambda_i(L_1^{-}(G)) = k(r-1) + \sum_{i=1}^k \lambda_i(L(G)). 
\]
Since $f_r(Y)=|E|$, we obtain
\[
\sum_{i=1}^{k} \lambda_i(L_{r-1}^+(Y))= \left(\sum_{i=1}^k \lambda_i(L(G))\right) + k(r-1) = f_r(Y) + \eps_k(G) + (r-1)k.
\]
\end{proof}
By Proposition \ref{prop:extremal_brouwer_complexes}, every extremal example to the $k$-th inequality in Brouwer's conjecture (that is, every graph $G$ with $\eps_k(G)=\binom{k+1}{2}$) gives rise, for every $r\ge 2$, to a simplicial complex $Y$ achieving equality in the $k$-th inequality for $L_{r-1}^+(Y)$ in Conjecture \ref{conj:higher_brouwer}.
Such graphs are known to exist: for example, it is easy to check that the graph $G=(V,E)$ with $V=A\cup B$, $A\cap B=\emptyset$, $|A|=k$, and $E=\{\{u,v\}:\, |\{u,v\}\cap B|\le 1\}$, satisfies $\eps_k(G)=\binom{k+1}{2}$.

Finally, the next proposition shows that the $k=1$ case of Conjecture \ref{conj:higher_brouwer} follows from a result of Fan, Wu, and Wang \cite{fan2025largest}.

\begin{proposition}
   Let $X$ be a simplicial complex on vertex set $V$, and let $1\le r\le \dim(X)$. Then,
\[
    \lambda_1(L_{r-1}^{+}(X)) \le f_{r}(X) +r.
\] 
\end{proposition}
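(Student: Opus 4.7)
The plan is to deduce this directly from the Fan--Wu--Wang bound mentioned right after the statement of Theorem~\ref{thm:degree_bound}, namely
\[
    \lambda_1(L_{r-1}^{+}(X)) \le \max_{\tau\in X(r)} \sum_{\substack{\sigma\in X(r-1),\\ \sigma\subset \tau}} \deg_X^{(r)}(\sigma).
\]
Once this bound is available, the proposition reduces to a short double-counting argument that bounds the inner sum by $f_r(X)+r$ for every $\tau\in X(r)$.

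First, I would fix an arbitrary $\tau\in X(r)$ and rewrite the inner sum by double counting:
\[
\sum_{\substack{\sigma\in X(r-1),\\ \sigma\subset \tau}} \deg_X^{(r)}(\sigma) = \left|\left\{(\sigma,\tau')\in X(r-1)\times X(r):\, \sigma\subset \tau,\; \sigma\subset \tau'\right\}\right|.
\]
For a fixed $\tau'\in X(r)$, the number of $\sigma\in X(r-1)$ satisfying $\sigma\subset\tau$ and $\sigma\subset\tau'$ equals the number of $(r-1)$-dimensional subsets of $\tau\cap\tau'$. Since $|\tau|=|\tau'|=r+1$, this is nonzero only when $|\tau\cap\tau'|\in\{r,r+1\}$: it equals $r+1$ when $\tau'=\tau$, and exactly $1$ when $|\tau\cap\tau'|=r$.

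Splitting the count accordingly, I obtain
\[
\sum_{\substack{\sigma\in X(r-1),\\ \sigma\subset \tau}} \deg_X^{(r)}(\sigma) = (r+1) + \left|\{\tau'\in X(r):\, \tau'\neq\tau,\, |\tau\cap\tau'|=r\}\right| \le (r+1)+(f_r(X)-1) = f_r(X)+r,
\]
where the inequality just bounds the number of $\tau'\neq\tau$ by $f_r(X)-1$. Combining with the Fan--Wu--Wang bound yields the desired inequality. There is no substantial obstacle here; the only ingredient beyond a routine count is the externally cited bound on $\lambda_1(L_{r-1}^{+}(X))$.
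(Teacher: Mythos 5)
Your proof is correct and takes essentially the same route as the paper: both invoke the Fan--Wu--Wang bound $\lambda_1(L_{r-1}^{+}(X))\le \max_{\tau}\sum_{\sigma\subset\tau}\deg_X^{(r)}(\sigma)$ and then bound the inner sum by the same double count, using that any $\tau'\neq\tau$ shares at most one $(r-1)$-face with $\tau$. No issues.
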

\begin{proof}
    For $\tau\in X(r)$, let $\partial\tau=\{\sigma\in X(r-1):\, \sigma\subset\tau\}$. 
    It is shown in \cite[Theorem 3.5]{fan2025largest} that
    \[
        \lambda_1(L_{r-1}^+(X))\le \max_{\tau\in X(r)} \left(\sum_{\sigma\in \partial \tau} \deg_X^{(r)}(\sigma)\right).
    \]
    Now, let $\tau\in X(r)$. For every $\sigma,\sigma'\in \partial\tau$, $\tau$ is the unique $r$-dimensional simplex containing both $\sigma$ and $\sigma'$. Therefore,
    \[
   \sum_{\sigma\in \partial \tau} \deg_X^{(r)}(\sigma)\le  f_r(X)+ |\partial \tau|-1 = f_r(X)+r.
    \]
    So, $ \lambda_1(L_{r-1}^+(X))\le f_r(X)+r$, as required.
\end{proof}

\subsection{Sums of eigenvalues of signless Laplacian matrices}

Let $X$ be a simplicial complex, and let $1\le r\le \dim(X)$. The $(r-1)$-dimensional upper \emph{signless Laplacian} on $X$ is the matrix $Q_{r-1}^{+}(X)\in \Rea^{f_{r-1}(X)\times f_{r-1}(X)}$ defined by
\[
    Q_{r-1}^{+}(X)_{\tau,\eta}= \begin{cases}
        \deg_X^{(r)}(\tau) & \text{if } \tau=\eta,\\
        1 & \text{if } |\tau\cap \eta|=r-1,\, \tau\cup\eta\in X,\\
        0 & \text{otherwise,}
    \end{cases}
\]
for all $\tau,\eta\in X(r-1)$. For a graph $G$,  $Q_0^{+}(G)=Q(G)$ is the well-known signless Laplacian matrix of $G$. 
The \emph{$r$-dimensional signless boundary operator} $N_r(X)\in \Rea^{f_{r-1}(X)\times f_r(X)}$ is defined by
\[
    N_r(X)_{\sigma,\tau} = \begin{cases}
        1 & \text{if } \sigma\subset\tau,\\
        0 & \text{otherwise,}
    \end{cases}
\]
for all $\sigma\in X(r-1)$ and $\tau\in X(r)$. Note that $Q^{+}_{r-1}(X)= N_r(X) N_r(X)^{T}$. We define $Q^{-}_r(X)= N_r(X)^T N_r(X)\in \Rea^{f_r(X)\times f_r(X)}$. By Lemma \ref{lemma:AAT}, $Q_{r-1}^{+}(X)$ and $Q_r^{-}(X)$ have the same non-zero eigenvalues.

In direct analogy to Brouwer's conjecture, Ashraf, Omidi, and Tayfeh-Rezaie conjectured in \cite{ashraf2013onthesum} that for every $k\ge 1$ and every graph $G=(V,E)$ with $|V|\ge k$, $\sum_{i=1}^k \lambda_i(Q(G))\le |E|+\binom{k+1}{2}$. They verified their conjecture for $k=1$ and $k=2$, and for general $k$ in the special case of regular graphs. See, for example, \cite{yang2014onaconjecture,chen2018note2,pirzada2024kyfan, zhou2024onthesum} and the references therein for some further partial results.

Let us note that the signless analogue of our main result, Theorem \ref{thm:degree_bound}, holds.

\begin{theorem}\label{thm:degree_bound_signless}
      Let $X$ be a simplicial complex on vertex set $V$, $1\le r\le \dim(X)$, and $1\le k\le f_{r-1}(X)/(r+1)$. Then,
    \[
        \sum_{i=1}^k \lambda_i(Q_{r-1}^{+}(X))\le \sum_{i=1}^{(r+1)k} d^{(r)}_i(X).
    \]
\end{theorem}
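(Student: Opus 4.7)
The plan is to follow the proof of Theorem \ref{thm:degree_bound} almost verbatim, replacing the signed boundary operator $B_r(X)$ with the signless boundary operator $N_r(X)$ throughout. The key structural fact we need is the signless analogue of Corollary \ref{cor:Lminus_L}: since $Q^+_{r-1}(X) = N_r(X)N_r(X)^T$ and $Q^-_r(X) = N_r(X)^T N_r(X)$, Lemma \ref{lemma:AAT} implies that $Q^+_{r-1}(X)$ and $Q^-_r(X)$ have the same non-zero eigenvalues. A direct computation gives $Q^-_r(X)_{\tau,\tau} = r+1$, $Q^-_r(X)_{\tau,\eta} = 1$ if $|\tau\cap\eta|=r$, and $0$ otherwise. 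So, as in the original proof, it will suffice to bound the $k$ largest eigenvalues of $Q^-_r(X)$.

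First, the case $f_r(X) < k$ is handled exactly as before, via $\sum_{i=1}^k \lambda_i(Q^+_{r-1}(X)) = \operatorname{Trace}(Q^+_{r-1}(X)) = \sum_{\sigma\in X(r-1)} \deg_X^{(r)}(\sigma)$. Assuming $f_r(X) \ge k$, order the $(r-1)$-faces $\sigma_1,\dots,\sigma_{f_{r-1}(X)}$ by decreasing $r$-degree, set $d = d^{(r)}_{(r+1)k}(X)$, and define, for $1\le i\le (r+1)k$, the matrix $Q_i \in \Rea^{f_r(X)\times f_r(X)}$ by $(Q_i)_{\tau,\eta} = 1$ if $\tau = \eta$ and $\sigma_i \subset \tau$, $(Q_i)_{\tau,\eta} = 1$ if $\tau\cap\eta = \sigma_i$, and $0$ otherwise. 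Writing $Q_i = v_i^T v_i$ for the row vector $v_i$ with $(v_i)_\tau = \mathbf{1}[\sigma_i\subset \tau]$, Lemma \ref{lemma:AAT} shows that $Q_i$ has the single non-zero eigenvalue $\deg_X^{(r)}(\sigma_i) = d_i^{(r)}(X)$, and hence $\sum_{j=1}^k \lambda_j(Q_i) = d_i^{(r)}(X)$. (This is just the signless version of Lemma \ref{lemma:LAminus}, whose proof carries over verbatim after dropping the sign factors.)

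Next, set $w(\sigma) = \min\{d/\deg_X^{(r)}(\sigma),1\}$ and define
\[
  Q' = Q^-_r(X) - \sum_{i=1}^{(r+1)k}\!\!\left(1 - \tfrac{d}{d_i^{(r)}(X)}\right) Q_i.
\]
A direct computation, exactly mirroring Claim \ref{claim:L'1}, gives $Q'_{\tau,\tau} = \sum_{\sigma\subset\tau} w(\sigma)$, $Q'_{\tau,\eta} = w(\tau\cap\eta)$ when $|\tau\cap\eta|=r$, and $0$ otherwise. Since all entries of $Q'$ are non-negative, the Frobenius--Ger\v{s}gorin bound of Lemma \ref{lemma:frobenius} yields, as in Claim \ref{claim:L'2},
\[
  \lambda_1(Q') \;\le\; \max_{\tau\in X(r)} \sum_{\sigma\subset\tau} \deg_X^{(r)}(\sigma)\, w(\sigma) \;\le\; (r+1)d,
\]
using $\deg_X^{(r)}(\sigma)\, w(\sigma) \le d$. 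Finally, Ky Fan's inequality (Lemma \ref{lemma:ky_fan}) applied to the decomposition $Q^-_r(X) = Q' + \sum_{i=1}^{(r+1)k}(1-d/d_i^{(r)}(X))\,Q_i$, combined with $\sum_{j=1}^k \lambda_j(Q_i) = d_i^{(r)}(X)$, gives
\[
  \sum_{i=1}^k \lambda_i(Q^+_{r-1}(X)) \;\le\; k(r+1)d + \sum_{i=1}^{(r+1)k}\bigl(d_i^{(r)}(X)-d\bigr) \;=\; \sum_{i=1}^{(r+1)k} d_i^{(r)}(X).
\]

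There is no real obstacle: since the Gershgorin bound of Lemma \ref{lemma:frobenius} depends only on the absolute values of the matrix entries, replacing the $\pm 1$ entries arising from the signs $(\tau:\sigma)(\eta:\sigma)$ by the constant $+1$ preserves every inequality in the original argument. The only point that needs a moment's care is the initial identification of the non-zero spectra of $Q^+_{r-1}$ and $Q^-_r$, which follows directly from $Q^+_{r-1} = NN^T$, $Q^-_r = N^T N$, and Lemma \ref{lemma:AAT}.
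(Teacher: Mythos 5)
Your proof is correct and is precisely the argument the paper intends: it transplants the proof of Theorem \ref{thm:degree_bound} to the signless setting, using $Q^+_{r-1}=N_rN_r^T$, $Q^-_r=N_r^TN_r$ and Lemma \ref{lemma:AAT} in place of Corollary \ref{cor:Lminus_L}, and correctly observes that the Gershgorin step only sees absolute values, so dropping the signs changes nothing. The paper omits this proof as ``essentially the same'' as that of Theorem \ref{thm:degree_bound}, and your write-up supplies exactly those details.
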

Since the proof of Theorem \ref{thm:degree_bound_signless} is essentially the same as the proof of Theorem \ref{thm:degree_bound}, we omit the details. Signless Laplacian versions of Corollary \ref{cor:weak_brouwer_complexes}, Theorem \ref{thm:degree_bound_partite}, Corollary \ref{cor:dr_partite},
Proposition \ref{prop:2k_set}, Lemma \ref{lemma:apps} and parts $1,3,4,5$ and $6$ of Proposition \ref{prop:apps} hold as well. Again, their proofs are virtually the same as for their Laplacian counterparts.

On the other hand, since an analogue of Bai's theorem does not hold for the signless Laplacian, our proof of Theorem \ref{thm:weak_brouwer_improved} does not extend to the signless setting. However, we can prove the following weaker result.
\begin{proposition}
    Let $k\ge 1$, and let $G=(V,E)$ be a triangle-free graph with $|V|\ge k$. Then,
    \[
        \sum_{i=1}^k \lambda_i(Q(G))\le |E|+k^2.
    \]
\end{proposition}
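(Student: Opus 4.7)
The plan is to deduce this proposition as a direct consequence of the signless Laplacian analogue of Lemma \ref{lemma:apps} (whose existence is noted in the paragraph immediately after Theorem \ref{thm:degree_bound_signless}), combined with Mantel's classical extremal theorem.

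First, I would observe that the family of triangle-free graphs is hereditary, since every induced subgraph of a triangle-free graph is again triangle-free. Second, by Mantel's theorem, every triangle-free graph $H=(V',E')$ satisfies $|E'|\le \lfloor |V'|^2/4\rfloor$. Thus, setting $f(n)=\lfloor n^2/4\rfloor$, which is a monotone non-decreasing function from $\mathbb{Z}_{\ge 0}$ to $\mathbb{R}_{\ge 0}$, the hypotheses of the signless version of Lemma \ref{lemma:apps} are satisfied.

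Applying this signless analogue of Lemma \ref{lemma:apps} to $G$ (which has $|V|\ge k$) then yields
\[
\sum_{i=1}^k \lambda_i(Q(G))\le |E|+f(2k)=|E|+\lfloor (2k)^2/4\rfloor=|E|+k^2,
\]
as wanted. There is essentially no real obstacle in this argument: the heavy lifting has already been done by Theorem \ref{thm:degree_bound_signless} and its consequences (the signless analogues of Proposition \ref{prop:2k_set} and Lemma \ref{lemma:apps}, which are proved by essentially the same arguments as their Laplacian counterparts, using $Q_r^-(X)$ in place of $L_r^-(X)$ and the signless boundary $N_r(X)$ in place of $B_r(X)$). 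The only input specific to the triangle-free setting is the Mantel bound, which is precisely what makes $f(2k)=k^2$ the right quantity to match the conjectured Brouwer-type inequality for $Q(G)$.
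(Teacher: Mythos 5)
Your proposal is correct and follows exactly the paper's own argument: both invoke the signless Laplacian analogue of Lemma \ref{lemma:apps} applied to the hereditary family of triangle-free graphs, with Mantel's theorem supplying the edge bound $f(n)=n^2/4$ so that $f(2k)=k^2$. No differences worth noting.
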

\begin{proof}
    Let $G=(V,E)$ be a triangle-free graph with $|V|\ge k$. 
    Since the family of triangle-free graphs is hereditary and, by Mantel's theorem (see \cite[Chapter I, Theorem 2]{bollobas1998modern}),  every $n$-vertex triangle-free graph has at most $n^2/4$ edges, we obtain, by the signless Laplacian version of Lemma \ref{lemma:apps},
    \[
         \sum_{i=1}^k \lambda_i(Q(G))\le |E|+ (2k)^2/4= |E|+k^2.
    \]
\end{proof}

Computer experiments suggest that analogues of Theorem \ref{thm:main+bai} and Conjecture \ref{conj:brouwer_plus} may hold in the signless setting as well. It is plausible that a signless version of Conjecture \ref{conj:higher_brouwer} may hold as well.

\bibliographystyle{abbrv}
\bibliography{main}

\end{document}